\documentclass[11pt]{article}
\usepackage[sectionbib]{natbib}
\usepackage{amssymb}
\usepackage{mathrsfs}
\usepackage{algorithm}

\usepackage{algpseudocode}

\usepackage{rotating} 
\usepackage{amsmath}
\usepackage{amsbsy}
\usepackage{epsfig}
\usepackage{enumerate}
\usepackage{bm}
\usepackage{color}

\newcommand{\blue}[1]{\textcolor{blue}{#1}}

\usepackage[colorlinks,linkcolor=blue]{hyperref}

\newtheorem{lemma}{Lemma}[section]
\newtheorem{theorem}{Theorem}[section]
\newtheorem{condition}{Condition}

\newtheorem{corollary}{Corollary}[section]
\newtheorem{remark}{Remark}[section]
\newtheorem{example}{Example}[section]
\newtheorem{definition}{Definition}[section]
\newtheorem{procedure}{Procedure}

\renewcommand{\theequation}{\thesection.\arabic{equation}}
\newbox\TempBox \newbox\TempBoxA

\def\pr{\textsf{P}} 
\def\ep{\textsf{E}} 
\def\Cov{\textsf{Cov}} 
\def\Var{\textsf{Var}} 
\def\Cal#1{{\mathcal #1}}
\def\underwiggle 1{
\ifmmode\setbox\TempBox=\hbox{$ 1$}\else\setbox\TempBox=\hbox{
1}\fi
\setbox\TempBoxA=\hbox to \wd\TempBox{\hss\char'176\hss}
\rlap{\copy\TempBox}\smash{\lower9pt\hbox{\copy\TempBoxA}}
}
\renewcommand{\baselinestretch}{1.5}
\begin{document}

\thispagestyle{empty}

\begin{center}
 { \LARGE\bf  Efficient Doubly Adaptive Biased Coin Designs for Multiple Treatments$^{\ast}$}
\end{center}

\begin{center} {\sc
\href{https://mypage.zjgsu.edu.cn/tjysjkx/zlx2_en/main.htm}{\blue{Li-Xin Zhang}}\footnote{Research supported by  NSF of China (Grant Nos. U23A2064), National Key
R\&D Program of China (No. 2024YFA1013502) and   the Summit Advancement Disciplines of Zhejiang Province (Zhejiang Gongshang University - Statistics)
}
}\\
{\sl \small School  of Statistics and Data Science, Zhejiang Gongshang University, Hangzhou 310018} \\
(Email:stazlx@mail.zjgsu.edu.cn)    \\
\end{center}

\begin{abstract} The randomness,   efficiency (power and variability), and desirable allocation proportions are important components for evaluating a response-adaptive design in clinical trials and conflicted demands in applications.  The aim of this paper is to provide designs dealing with these dilemmas.
 We first give a general framework  for
 efficient response-adaptive randomization procedures  that attain the Cram\'er-Rao
lower bounds of the allocation variances for any desired allocation proportions. The general framework is flexible for us to define new families of efficient designs with good properties for both two and multiple-treatment clinical trials.   We also prove that, among all response-adaptive randomization procedures with the same limit allocation proportions,  the selection biases and entropies as measures of the randomness of the designs have their optimal values. Basing on the theory on efficiency and randomness, we propose  a new family of  doubly adaptive biased coin designs for multi-treatment clinical trials  that can target any allocation proportion and are asymptotically best in terms both  the randomness and efficiency so that their randomness is asymptotic optimal and asymptotic allocation variance attains the Cram\'er-Rao lower bound. Theoretical properties,
including the strong consistency, the asymptotic normality, and the functional central limit theorem for both the sample allocation proportions and the estimators of the distribution parameters, are developed by using the technique of Gaussian approximation and Gaussian comparing theorems.

{\bf Keywords:}  Response-adaptive designs, efficiency, selection bias,  biased coin design, clinical trial, urn model,
doubly adaptive biased coin design, power.

{\bf AMS 2020 subject classifications:}   Primary 60F15, 62G10; secondary 60F05, 60F10.
\end{abstract}

 \
\baselineskip 22pt

\renewcommand{\baselinestretch}{1.7}



\section{Introduction}\label{sectIntrodution}
\setcounter{equation}{0}

 In clinical trials, patients normally arrive sequentially. Response-adaptive designs are adaptive schemes to randomize treatments to patients with allocation probabilities depending on the results of previous assignments and previous treatment outcomes. The adoption of response-adaptive designs has proved to be beneficial to researchers, by
providing more efficient clinical trials, and to patients, by increasing the likelihood
of receiving better treatment.   Early important work on response-adaptive designs was carried out by \cite{Thompson33} and \cite{Robbins52}. Since then, a lot of response-adaptive designs have been proposed in the literature (see
\citet{RL02}, \citet{HR06}, and therein reference). Much recent
work has focused on proposing better randomized adaptive designs. The
three main components for evaluating a response-adaptive design are allocation
proportion, efficiency (power), and variability, as discussed in  \cite{HR03}.

{\em 1.1. Allocation proportion}. To achieve different goals of adaptive
designs, {\em optimal allocation proportions} are usually derived from certain
multiple-objective optimality criteria. This has been studied by \cite{Hayre79}
and \cite{JT00}.   For two-treatment trials with dichotomous responses, \cite{RSIHR01} proposed an optimal allocation proportion by minimizing the expected number of treatment failures for the fixed variance of the Wald test, which offers a perfect trade-off between the ethical problem and the power of the test.    \cite{TRH07} proposed a general framework to obtain optimal allocation proportions for comparing two or
more treatments. Modern research on response-adaptive randomization has aimed to develop optimal response-adaptive randomization procedures that increase or maintain power over traditional balanced randomization designs while targeting  optimal allocation proportions  derived from certain
 optimality criteria \citep{HR06}.   The doubly-adaptive biased coin design (DBCD) proposed by \cite{HZ04}, which builds upon Efron's biased coin design \citep{Efron71,Eisele94}, is an example of such a procedure. The DBCD can target any allocation, is asymptotically optimal, and require only mild regularity conditions.

{\em 1.2. Variability and efficiency}.  \cite{HR03}  analyzed the power of the classical statistical tests for comparing the efficacy of treatments in response-adaptive randomized experiments, showing that the power is a decreasing function of the variability induced by
the randomization procedure for any given allocation proportion.
\cite{HRZ06}  established a lower bound on the asymptotic variability of response-adaptive
designs which provide a baseline for comparison of existing designs
and further guidance in developing new designs. A response-adaptive design
that attains this lower bound will be said to be  {\em efficient} or {\em asymptotically best}. The DBCD proposed by \cite{HZ04} is not efficient though the asymptotic variability can approach the lower bound as a tune parameter of the design goes to infinity.   For more about the fundamental theory on response-adaptive designs, one can refer to the book of \cite{HR06}  who also stated a lot of important open problems and concluded that the most interesting open question of all is: {\em Is there a fully randomized
procedure that targets any optimal allocation that is asymptotically best?} (Page 157).   In comparing two treatments, \cite{HZH09} gave the first answer to this problem. As a generalization of \cite{Efron71}'s biased coin design (BCD),   a  family of efficient randomized-adaptive designs (ERADE) was proposed by them for any desired allocation proportion that may depend on the unknown parameters. Theoretical results and simulation study verified the advantage of using the ERADE  over existing designs including the randomized play-the-winner rule of \cite{WD78}. But the studies of \cite{HZH09}  are limited to the two-treatment case. Comparing multiple treatments simultaneously is popular in clinical trial studies. The triple-treatment case is typical. For example, in non-inferiority trials which are intended to show that the effect of a new treatment is not worse than a reference product, the new treatment, the reference, and a placebo are usually recommended to be compared simultaneously.
{\em How to generalize ERADE to multi-treatment case} is stated as an open problem in \cite{HZH09} (page 2555). Recently, \cite{Zhang2024} and \cite{AHZ2025} have generalized ERADE to multi-treatment case.

{\em 1.3. Selection bias}. Selection bias is another important issue in clinical trial studies. Selection bias in an adaptive design occurs if the allocation process is predictable. If the experimenter can predict the next
assignment, he may consciously or unconsciously bias the
experiment as to what treatment particular types of subjects should receive. The randomization technique is used for neutralizing such bias in clinical trials.  It is obvious that complete randomization eliminates
selection bias, and the deterministic allocation procedure maximizes it.  The randomness of a design will reduce the selection bias, but at the same time would increase the variability of the design and lose efficiency. As discussed in \cite{HRZ06}, it presents
numerous dilemmas regarding the tradeoffs among randomization, variability, and optimality. In this paper, we will prove that among all adaptive designs with the same target allocations, the asymptotic selection bias has a lower bound. This lower bound is the optimal value of the selection bias of a design. Since  the patients are allocated sequentially by a system of probabilities in a randomized adaptive design, the  Shannon entropy is also a good measure of the randomness of the allocations. We will also prove that among all adaptive designs with the same target allocations, the asymptotic entropy also has a upper bound. This upper bound is the optimal value of the entropy of a design.   Though the ERADE of \cite{HZH09}  is efficient, it loses randomness because  in most circumstances it does not attain the optimal value of the selection bias and the optimal value of the entropy. The doubly adaptive biased coin design (DBCD) of \cite{Eisele94} and  \cite{EW95}, its modification of \cite{HZ04}, and most generalized Friedman's urn models attain the optimal value of the selection bias and the optimal value of the entropy. But they are not efficient. This leads to a challenge for researchers to {\em find a procedure
that can target any allocation proportion, preserves enough degree of randomization so that its selection bias is optimal, and attains the lower bound of allocation variability so that it is efficient}.

{\em 1.4. Objectives and organization of this paper}. In this paper,  we focus on the problem of the trade-off between efficiency, selection bias, and allocation proportion. We find efficient and randomized procedures for multiple-treatment trials that can adapt to any desired allocation proportions.
A comprehensive framework of efficient designs is proposed to serve as a paradigm for treatment allocation procedures in clinical trials, in which the allocation function can be either continuous or discrete. Under some wide satisfied conditions, we obtain the asymptotic properties including the almost sure convergence and the functional central limit theorem of both the sample allocation proportions and the estimators of the distribution parameter of the responses. The asymptotic variance-covariance matrix of the sample allocation proportions is proved to attain the lower bound.
By choosing a flexible continuous allocation function, we propose a new family of efficient DBCDs for multiple-treatment trials that can adapt to any desired allocation proportions,  attain the lower bound of variability so that it is efficient, and at the same time attain the optimal value of the selection bias and the optimal value of the entropy.

In the literature, asymptotic properties of response-adaptive designs are usually studied under continuous and differentiable allocation probability function by using Taylor expansion (c.f., \cite{Smith84a}, \cite{MPG01}, \cite{EW95}, \cite{HR06} and therein references). \cite{HZH09}  introduced a stopping time method to derive asymptotic properties for the case that the allocation probability at each step has only three possible values. These techniques do not work anymore for our general framework for multi treatments.   The key component in this paper for proving the asymptotic properties is showing that the distance between the sample allocation proportion and an efficient estimator of the target proportion can be neglected.  The recursive stochastic approximation algorithm, the Gaussian process approximation, and the Gaussian comparison theorem will be used for estimating the largest possible distance.

To summarize, the major contributions of this paper are as follows.

\begin{description}
  \item[\rm (a)] It formulates a general flexible framework of efficient adaptive-randomized designs for multiple treatments that are flexible to target any desired allocation proportion including allocation proportions that are derived from certain optimality criteria.  The framework with a general condition that is easily  verified  enables us    to derive with freedom new  efficient randomized-adaptive designs with desirable properties.
\item[\rm (b)] The paper derives asymptotic properties of the general efficient randomized-adaptive designs.
These asymptotic properties cover the existing asymptotic properties of \cite{HZH09}, \cite{Zhang2024} and \cite{AHZ2025}   as very special cases,  and can be utilized for  considering adaptive designs stopped a random time as well as for sequential monitoring of an adaptive design and other applications.
\item[\rm (c)] The paper proves that for each desired allocation proportion, adaptive designs have an optimal selection bias and an optimal entropy, which provides a baseline for comparison the degree of randomization of adaptive designs.
 \item[\rm (d)] With the general framework, the paper finds a family of adaptive designs which are not only efficient in the sense that they attain  the lower bound of the asymptotic variability, but also most random in sense that they have the optimal selection bias and optimal entropy.

\end{description}

The paper is organized as follows. In Section \ref{sectAssumption}, we give general assumptions on the adaptive designs in which both the historical assignments and treatment outcomes are implicated to update the allocation probability. In Section \ref{sectERADE}, a general condition on the allocation function is given for the adaptive design to be efficient, and the general framework of multiple-treatment efficient randomized-adaptive designs is proposed. Several new families of designs are defined by examples. In Section \ref{sectSelectionbias}, we consider the selection bias and entropy as measures of the randomness of a response-adaptive design, and show that there is a lower bound of the asymptotic selection bias and a upper bound of the asymptotic entropy among all response-adaptive designs with the same target allocation proportions. In Section \ref{sectbest},  a  family of designs is  shown asymptotically best in sense that it  attains both the Cram\'er-Rao bound of the allocation variabilities and the optimal values of  the selection bias and  entropy. General asymptotic properties are stated in Section \ref{sectAsmptotics} with proofs being given in the supplementary materials.

\section{General  assumptions}\label{sectAssumption}
\setcounter{equation}{0}

 We consider   $K$-treatment clinical
trials, $K\ge 2$. Suppose that the patients come to the clinical trial
sequentially and respond immediately. After the first $m$ patients are assigned to treatments and the responses
observed, the $(m+1)$-th
patient will be assigned to treatment $k$ with a probability
$p_{m,k}$, $k=1,\ldots, K$. The probabilities $p_{m,k}$s may depend on both the treatments assigned to and the
responses observed of the previous $m$ patients. Let $\bm
X_m=(X_{m,1},\ldots, X_{m,K})$ be the result of the $m$-th
assignment, i.e., the $k$-th component $X_{m,k}$ of $\bm X_m$ is
$1$ and other components are $0$ if the $m$-th patient is assigned to the
treatment $k$. We assume that
the patient responses with each treatment are i.i.d. with the probability distribution
indexed by $\bm \theta_k\in \mathcal R^d$ $(k = 1, \cdots, K)$. The cases of d = 1 (for binary response) or d = 2
(for normally distributed response) are typical.

Let $\bm N_m=(N_{m,1},\ldots, N_{m,K})=\sum_{j=1}^m\bm X_m$, where
$N_{m,k}$ is the number of the patients assigned to the treatment
$k$ in the first $m$ stages, $k=1,\ldots, K$. We assume that  the desired
allocation proportion of patients assigned to each treatment is a
function of $\bm\Theta=(\bm\theta_1,\ldots,\bm\theta_K)$ (see \cite{MPG01} and \cite{RSIHR01} for a related discussion). More specifically, the goal of the allocation
scheme is to have $\bm N_m/m\to \bm v=\bm \rho(\bm \Theta)$, where
$\bm \rho(\cdot)=(\rho_1(\cdot),\ldots,\rho_K(\cdot)): \Cal R^{d
\times K}\to (0,1)^K$ is a vector-valued function satisfying
$\bm \rho(\bm y)\bm 1^{\prime}=1$. For the target proportion $\bm\rho(\cdot)$, we assume the following condition.

\begin{condition} \label{ConA} The proportion function
$\bm y=(\bm y_1,\ldots,\bm y_K)=(y_{11},\ldots,y_{1d},\ldots,
y_{K1},\ldots,y_{Kd})\to \bm  \rho(\bm y): \Cal R^{d\times K}\to
(0,1)^K$  is a continuous function and is   differentiable at $\bm\Theta$ with $\bm\rho(\bm\Theta)=\bm v$.
\end{condition}

We further assume that the parameter estimate $\widehat{\bm \theta}_{m,k}$ of $\bm\theta_k$ based on an $m$-patient study has the Bahadur-type representation
\begin{equation}\label{eq2.1}
 \widehat{\bm \theta}_{m,k}
=N_{m,k}^{-1}\sum_{j=1}^m X_{j,k}\bm \xi_{j,k}+o(N_{m,k}^{-1/2}) \; a.s. \; \text{ as } m\to \infty
\end{equation}
for some i.i.d. sequences of random variables $\{\bm\xi_{j,k}, j = 1,2,\ldots\}$, where $\bm\xi_{j,k}=(\xi_{j,k1},\ldots, \xi_{j,kd})$ is the
response or a function of the response of the $j$th patient on treatment $k$. We also
write $\widehat{\bm \Theta}_{m,k}=( \widehat{\bm \theta}_{m,1},\ldots, \widehat{\bm \theta}_{m,K})$ and  $\bm \xi_m=(\bm \xi_{m,1},\ldots,\bm \xi_{m,K})$.

In most applications, the response distributions belong to an exponential family.
Then, we take $\bm\theta_k = \ep [\bm\xi_{1,k}]$, where $\bm\xi_{1,k}$s are the natural sufficient statistics, and
$\widehat{\bm\theta}_{m,k}$ is the average of the observed sufficient statistics. In practice, we may start
with $\bm\Theta_0 = (\bm\theta_{0,1},\ldots,\theta_{0,K})$ as an initial estimate of $\bm\Theta$ and use a modified sample mean
\begin{equation}\label{eq2.2}
 \widehat{\bm \theta}_{m,k}
=\frac{\sum_{j=1}^m X_{j,k}\bm \xi_{j,k}+\bm \theta_{0,k}}{N_{m,k}+1},
\end{equation}
to ensure a well-defined estimator even when no patients are assigned to treatment
$k$. The initial estimate $\bm\Theta_0$ is a guessed value of $\bm\Theta$ or an estimate of $\bm\Theta$ from
early trials.

\begin{remark}
When the distribution of $\bm \xi_{1,k}$ belongs to a family of
distributions with parameter $\bm \theta_k$,  the popular maximum
likelihood estimator $\widehat{\bm \theta}_{m,k}^{MLE}$ is usually
employed to estimate the parameter $\bm \theta_k$,
$k=1,2,\ldots,K$. Under suitable regular conditions, we can find
functions $\bm f_k$ with $\ep[\bm
f_k(\bm \xi_{j,k})]=\bm \theta_k$ and $\Var\{\bm
f_k(\bm \xi_{j,k})\}=\bm I_k^{-1}(\bm \theta_k)$ such that
$$
 \widehat{\bm \theta}_{m,k}^{MLE}
=\frac{\sum_{j=1}^m X_{j,k}\bm
f_k(\bm \xi_{j,k})+\bm \theta_{0,k}}{N_{m,k}+1}+o(N_{m,k}^{-1/2})\;\;
a.s., \quad k=1,\ldots, K.
$$
$\widehat{\bm \theta}_{m,k}^{MLE}$ has the same form as in
(\ref{eq2.1}) with $\bm f_k( \bm \xi_{1,k})$ taking the place of
$\bm \xi_{1,k}$, $k=1,\ldots, K$. Here $\bm I_k(\bm \theta_k)$ is the Fishser
information matrix of $\bm \theta_k$.
\end{remark}

For the responses $\{\bm\xi_{j,k}\}$ we assume the following condition.

\begin{condition}\label{ConB}
In the Bahadur-type representation \eqref{eq2.1}, $\bm\theta_k= \ep\bm\xi_{1,k}$ and
$\ep \|\bm  \xi_{1,k}\|^{2+\epsilon}<\infty$ for some $\epsilon>0$, $k=1,\ldots,K$.
\end{condition}

For introducing our efficient randomized-adaptive designs  we first consider the following general framework of designs which implicate both the sample proportion of patients assigned to each treatment and the current estimate of the desired allocation proportion.

  Let
$\bm g(m,\bm x,\bm  \rho) =\big(g_1(m,\bm x,\bm  \rho)$, $\ldots$,
$g_K(m,\bm x,\bm  \rho)\big):\Cal Z^+\times [0,1]^K\times[0,1]^K\to
[0,1]^K$ be the allocation function with $\bm g(m,\bm x,\bm  \rho)\bm
1^{\prime}= 1$. A general framework of the allocation procedure with the allocation function $\bm g(m,\bm x,\bm  \rho)$ is defined as follows.\newline

\noindent\hrulefill
\vspace{-5mm}
\begin{procedure}\label{AP:A}  The general framework of allocation.

\vspace{-5mm}
 \noindent\hrulefill
  \begin{algorithmic}[1]
    \Require
      $\bm g(m,\bm x,\bm  \rho)$: the allocation function;
      $\bm \rho(\cdot)$: the target proportion function;
      $n$: totally number of patients to be allocated.
    \Ensure
      $\bm N_n$.

    \State To start, we assign the first patient by using a complete randomization, and estimate $\bm\theta_k$ by an initial value $\bm\theta_{0,k}$ when there are not enough responses on treatment $k$ to get a valid estimator;
    \State assume that $m$ $(m\ge 1)$ patients have been assigned in the trial, and their responses are observed.  Let $\widehat{\bm \Theta}_m$ be
estimator defined based on the the first $m$
observations satisfying  \eqref{eq2.1}, and $\widehat{\bm\rho}_m =\bm\rho(\widehat{\bm\Theta}_m)$;
\State the $(m+1)$-th patient is assigned
to the treatment $k$ with probability
\begin{equation} \label{allocation}p_{m,k}=g_k\big(m,\frac{\bm
N_m}{m},\widehat{\bm \rho}_m\big),
\end{equation}
 $k=1,\ldots, K$; update the value $\bm N_{m+1}=\bm N_m+\bm X_{m+1}$ and the value of $\widehat{\bm \Theta}_{m+1}$;
    \State repeat Steps  2 and 3 until  $m=n$;
  \end{algorithmic}

  \vspace{-5mm}
\noindent\hrulefill
  \end{procedure}

In this framework, the  probabilities of allocating a patient to treatments are  adaptive according to both the current allocation proportions and observed responses.
The multiple-treatment DBCD proposed by \cite{HZ04}  is a special case of the general framework with   continuous allocation functions
\begin{equation}\label{dbcd}
 g_k(m,\bm x, \bm \rho)=g_k(\bm x, \bm \rho)
=\frac{ \rho_k\big(\frac{\rho_k}{x_k}\big)^{\gamma} }{
\sum_{j=1}^K \rho_j\big(\frac{\rho_j}{x_j}\big)^{\gamma}}, \quad k=1,\ldots, K,
\end{equation}
 where $\gamma\ge 0$ is a constant. The procedure proposed by \cite{MPG01}  is a special case of the DBCD with $\gamma=0$. The DBCD has the advantage that it is flexible to target any desirable allocation proportions and usually has small variabilities.  For a DBCD with allocation function defined in (\ref{dbcd}), \cite{HZ04} and \cite{HZCC08} proved that
\begin{equation}\label{normalityofDBCD}
 n^{1/2}\big(\frac{\bm N_n}{n}-\bm\rho(\bm\Theta)\big)\overset{\Cal D}\to
 N(\bm 0, \bm \Lambda_{\gamma})\;\;\text{ and }\;\;
 n^{1/2}\big(\widehat{\bm \rho}_n-\bm\rho(\bm\Theta)\big)\overset{\Cal D}\to
 N(\bm 0, \bm \Sigma),
 \end{equation}
 where
 $$\bm \Lambda_{\gamma}=\frac{1}{1+2\gamma}\big(diag(\bm\rho(\bm\Theta))-\bm\rho(\bm\Theta)^{\prime}\bm\rho(\bm\Theta)\big)
 +\frac{2(1+\gamma)}{1+2\gamma}\bm \Sigma $$
and the value of $\bm \Sigma=\bm \Sigma(\bm \Theta)$ is defined as follows.  Let $\bm v=(v_1,\ldots,v_K)=\bm\rho(\bm\Theta)$,
\begin{equation*}
 \bm
V_k=\Var(\bm \xi_{1,k})=\left(\Cov\big[\xi_{1,ki},\xi_{1,kj}\big];
i,j=1,\ldots,d\right), \quad k=1,\ldots, K,
\end{equation*}
\begin{equation}\label{eqVarinceTheta}
\bm V=diag(\frac 1{v_1}\bm V_1,\ldots,\frac 1{v_K}\bm V_K),
\end{equation}
\begin{equation}
\label{eqLB}\bm \Sigma= \big(\frac{\partial\bm
\rho}{\partial\bm y}\big|_{\bm \Theta}\big)^{\prime} \bm V
\frac{\partial\bm \rho}{\partial\bm y}\big|_{\bm \Theta}
=\sum_{k=1}^K \frac 1{v_k} \big(\frac{\partial\bm  \rho}{\partial
\bm y_k}\big|_{\bm \Theta}\big)^{\prime}
   \bm V_k \frac{\partial\bm
\rho}{\partial \bm y_k}\big|_{\bm \Theta}.
\end{equation}
The matrix $\bm \Sigma$  in the definition of $\bm\Lambda_{\gamma}$ is obviously  a lower bound of $\bm\Lambda_{\gamma}$.

\section{Efficient randomized-adaptive design (ERADE)} \label{sectERADE}
\setcounter{equation}{0}

 \cite{HRZ06}  had shown that, under suitable
regular conditions on the distributions of the responses,  $\bm \Sigma$   is   the lower bound of the
asymptotic variance-covariance matrices of the allocation proportion $\bm N_n/n$ for all adaptive designs with the same target allocation proportion $\bm\rho(\bm\Theta)$, i.e., if
$$\sqrt{n}\left(\bm N_n/n-\bm \rho(\bm \Theta)\right)\overset{\Cal D}\to N\big(\bm 0, \bm \Lambda(\bm \Theta)\big),$$
then $\bm \Lambda(\bm \Theta)\ge \bm \Sigma(\bm \Theta)$ for almost all $\bm\Theta$.
 A design that attains this lower bound is called an {\em asymptotically best} or {\em efficient} randomized-adaptive design. The BDCD is not efficient because $\bm \Lambda_{\gamma}>\bm \Sigma$. However, $\bm\Lambda_{\gamma}$ approaches to the lower bound $\bm\Sigma$ as $\gamma\nearrow \infty$.

{\em \ref{sectERADE}.1. ERADE for two treatments}.
For   two-treatment clinical trials ($K=2$), \cite{HZH09}  proposed an efficient randomized-adaptive design (ERADE) by defining the allocation probability as
\begin{equation}\label{eqHuZhHe} p_{m,k}=\left\{\begin{array}{ll}
                          \alpha \widehat{\rho}_{m,k} & \text{ if } N_{m,k}>m\widehat{\rho}_{m,k},\\ \widehat{\rho}_{m,k} &\text{ if } N_{m,k}=m\widehat{\rho}_{m,k},\\
1-\alpha(1-\widehat{\rho}_{m,k})& \text{ if } N_{m,k}<m\widehat{\rho}_{m,k},
                         \end{array}\;\; k=1,2,
 \right.
 \end{equation}
where $0<\alpha<1$ is a constant that reflects the degree of randomization. Efron's biased coin design is a special case of Hu, Zhang and He's procedure with $\widehat{\rho}_m\equiv 1/2$.   \cite{HZH09} showed that their procedure attains the lower bound of the asymptotic variability. \cite{Zhang2024} and \cite{AHZ2025} generalized the ERADE to the multiple-treatment case.  In the next subsection we introduce the general framework of ERADEs including the Efferent DBCDs as we will propose. 

{\em \ref{sectERADE}.2. ERADEs for multi treatments}. Motivated by the DBCD with the allocation functions (\ref{dbcd}),  we propose the following procedure that is an  efficient
randomized-adaptive design for multiple-treatment clinical trials.

\noindent\hrulefill
\vspace{-5mm}
\begin{procedure}\label{AP:B}  An Efficient DBCD (EDBCD)  for multi treatments.

\vspace{-5mm}
 \noindent\hrulefill
  \begin{algorithmic}[1]
\State  Define a weight function as
\begin{equation}\label{eqEprade1.1}  \psi(x)=\begin{cases} 1+\sqrt{x^{2\gamma}-1}, & x\ge 1,\\
x^{\gamma}, &0\le x\le 1,
\end{cases}
\end{equation}
where $\gamma>0$ is a constant;
\State define the allocation functions $\bm g=(g_1,\ldots,g_K)$  as
\begin{equation}\label{eqEprade1.2}  g_{k}\big(m,\bm x ,\bm \rho\big) =g_{k}\big(\bm x ,\bm \rho\big)=\frac{\rho_k\psi\left(\frac{\rho_k}{x_k}\right)}{\sum_{j=1}^K\rho_j\psi\left(\frac{\rho_j}{x_j}\right)}, \; k=1,\ldots,K;
\end{equation}
\State then the patients are randomized to treatments sequentially  by the Allocation Procedure \ref{AP:A} with the allocation function $\bm g$, i.e. the $(m+1)$-th patient is randomized to treatment $k$ with a probability  $p_{m,k}=g_k\big(\frac{\bm
N_m}{m},\widehat{\bm \rho}_m\big)$.
  \end{algorithmic}

  \vspace{-5mm}
\noindent\hrulefill
  \end{procedure}
  It is obvious that $x^{\gamma}\le \psi(x)\le \sqrt{2}x^{\gamma}$. Notice that $\psi(y_k/x_k)\sim (y_k/x_k)^{\gamma}$ as $y_k/x_k\to \infty$ or $y_k/x_k\to 0$, which is equivalent to the weight  $(y_k/x_k)^{\gamma}$ in the allocation function (\ref{dbcd}) of the DBCD. Following the suggestion of \cite{HR03}, $\gamma$ is suggested to chosen between $2$ and $4$ in practice.

 
  Besides EDBCD, the following procedure behaves similarly. 
  
\noindent\hrulefill
\vspace{-5mm}
\begin{procedure}\label{AP:C}   EDBCD2. 

\vspace{-5mm}
 \noindent\hrulefill
  \begin{algorithmic}[1]
\State Define the allocation functions $\bm g=(g_1,\ldots,g_K)$  as
\begin{equation}\label{eqEprade3.1}  g_{k}\big(m,\bm x ,\bm \rho\big) =g_{k}\big(\bm x ,\bm \rho\big)= \rho_k-  \min_j\frac{\rho_j\wedge(1-\rho_j)}{|x_j-\rho_j|^{\alpha}}( x_k-\rho_k),  
\end{equation}
$k=1,\ldots,K,$ where $0<\alpha<1$, and $\frac{0}{0}$ is defined to be $0$;
\State then the patients are randomized to treatments sequentially  by the Allocation Procedure \ref{AP:A} with the allocation function $\bm g$, i.e. the $(m+1)$-th patient is randomized to treatment $k$ with a probability  $p_{m,k}=g_k\big(\frac{\bm
N_m}{m},\widehat{\bm \rho}_m\big)$.
  \end{algorithmic}

  \vspace{-5mm}
\noindent\hrulefill
  \end{procedure}

For showing the efficiency of the EDBCD and EDBCD2,  we  first give the following general conditions so that a  design   defined as     the Procedure \ref{AP:A} with allocation probabilities $p_{m,k}$s will be a multiple-treatment efficient
randomized-adaptive design.

\begin{condition}\label{ConC} Suppose that the
allocation probabilities of the $(m+1)$-th patient $\bm g(m,\bm
N_m/m,\widehat{\bm \rho}_m)$ can be written in the following form:
\begin{align} \label{eqConditionforEff}p_{m,k}= g_k(m,\frac{\bm
N_m}{m},\widehat{\bm \rho}_m) & \le \widehat{\rho}_{m,k}+\lambda_{m,k}\big[\frac{N_{m,k}}{m}-
\widehat{\rho}_{m,k}\big] +\gamma_{m,k}\|\frac{\bm N_m}{m}-\widehat{\bm \rho}_m\|\\
& \text{ if } \;
N_{m,k} -m \widehat{\rho}_{m,k}\ge L_0, \; k=1,2,\ldots, K\nonumber
\end{align}
for some  $L_0\ge 0$, with the conditions that
$$ \limsup_{m\to \infty} \gamma_{m,k}\le 0 \;a.s.\; \text{ and } \limsup_{m\to \infty}\lambda_{m,k}\le \lambda_0<1\;\; a.s.,
\quad k=1,2,\ldots, K, \eqno{(C1)} $$
$$  \lambda_{m,k}\to -\infty \; a.s.  \;\; \text{ on the event }
\;\; \Big\{\frac{\bm N_m}{m}\to \bm v,\; \widehat{\bm \rho}_m\to \bm v\Big\},
\quad k=1,2,\ldots, K. \eqno{(C2)}
$$
\end{condition}

By symmetry, the Condition \ref{ConC} can be replaced by the following one.

\noindent{\bf Condition C$^{\prime}$} {\it  Suppose that the
allocation probabilities of the $(m+1)$-th patient $\bm g(m,\bm
N_m/m,\widehat{\bm \rho}_m)$ can be written in the following form:
\begin{align*}p_{m,k}= g_k(m,\frac{\bm
N_m}{m},\widehat{\bm \rho}_m) & \ge \widehat{\rho}_{m,k}-\lambda_{m,k}\big[\widehat{\rho}_{m,k}-\frac{N_{m,k}}{m}
\big]-\gamma_{m,k}\|\frac{\bm N_m}{m}-\widehat{\bm \rho}_m\| \\
& \text{ if } \;
m \widehat{\rho}_{m,k}-N_{m,k} \ge L_0, \; k=1,2,\ldots, K
\end{align*}
for some  $L_0\ge 0$, and Conditions (C1) and (C2) are satisfied.
}

\smallskip
  The following two theorems tell us that the asymptotic variability of a design with allocation probabilities satisfying Condition \ref{ConC} (resp. Condition  C$^{\prime}$) actually attains the lower bound, as long as the parameter estimate $\widehat{\bm\Theta}_n$ is efficient.

\begin{theorem} \label{thNormality}
Under Conditions \ref{ConA}-\ref{ConC},
\begin{equation}\label{eqNormality}
 n^{1/2}\big(\frac{\bm N_n}{n}-\bm v\big)\overset{\Cal D}\to
 N(\bm 0, \bm \Sigma)
 \end{equation}
and
\begin{equation}\label{eqNormalityforTheta} n^{1/2}\big(\widehat{\bm\Theta}_n-\bm\Theta\big)\overset{\Cal D}\to
 N(\bm 0, \bm V),
 \end{equation}
 where $\bm V$ and $\bm\Sigma$ are defined as in (\ref{eqVarinceTheta}) and (\ref{eqLB}), respectively.
\end{theorem}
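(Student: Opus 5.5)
The plan is to show that the sample allocation proportion $\bm N_n/n$ tracks the estimated target $\widehat{\bm\rho}_n$ so closely that
\begin{equation*}
\frac{\bm N_n}{n}-\widehat{\bm\rho}_n=o_P(n^{-1/2}).
\end{equation*}
Given this, both limits in the theorem come from the asymptotics of $\widehat{\bm\Theta}_n$ and the delta method.

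\emph{Step 1 (martingale representation of the estimator).} Write $\bm M_{n,k}=\sum_{j\le n}X_{j,k}(\bm\xi_{j,k}-\bm\theta_k)$. These are martingales with respect to the filtration generated by assignments and responses, because $\bm X_j$ is decided before $\bm\xi_j$ is observed. By the Bahadur representation \eqref{eq2.1},
\begin{equation*}
\widehat{\bm\theta}_{n,k}-\bm\theta_k=\bm M_{n,k}/N_{n,k}+o(N_{n,k}^{-1/2}).
\end{equation*}
Under Condition \ref{ConB}, a strong approximation for martingales (or the martingale law of the iterated logarithm) gives $\bm M_{n,k}=O(\sqrt{N_{n,k}\log\log N_{n,k}})$ a.s.

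\emph{Step 2 (strong consistency).} I will show $\bm N_n/n\to\bm v$ a.s. The excess $D_{m,k}=N_{m,k}-m\widehat\rho_{m,k}$ has increments $X_{m+1,k}-p_{m,k}$ plus a drift coming from $\widehat\rho_{m,k}$. By (C1), whenever $D_{m,k}\ge L_0$ we have $p_{m,k}-\widehat\rho_{m,k}\le\lambda_0 D_{m,k}/m+o(\|\cdot\|)$. A stochastic-approximation / Lyapunov argument on the function $\max_k D_{m,k}^+$ then gives $\limsup\max_k(N_{m,k}/m-\widehat\rho_{m,k})\le0$ a.s. Since $\sum_k N_{m,k}/m=\sum_k\widehat\rho_{m,k}=1$, this upper bound on every coordinate also forces the lower bound. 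In addition, each $N_{m,k}\to\infty$, because $p_{m,k}$ cannot be kept small while $N_{m,k}/m$ lies below $\widehat\rho_{m,k}$. Hence $\widehat{\bm\Theta}_m\to\bm\Theta$ by Step 1, $\widehat{\bm\rho}_m\to\bm v$ by Condition \ref{ConA}, and then $\bm N_m/m\to\bm v$.

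\emph{Step 3 (the key step: negligible gap).} On the event of Step 2, (C2) gives $\lambda_{m,k}\to-\infty$. So above level $L_0$ the excess $D_{m,k}$ is pushed back with restoring strength $|\lambda_{m,k}|/m$, which grows faster than any constant multiple of $1/m$. The other source of change in $D_{m,k}$ is the fluctuation $m(\widehat\rho_{m+1,k}-\widehat\rho_{m,k})$. By Step 1 and Condition \ref{ConA} this is a martingale-type increment of size $O(m^{-1/2})$ per step, and it contributes an aggregate drift of order $\sqrt{m}$.

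I would first approximate the partial sums of these increments by a Gaussian process. I would then compare the recursion for $D_{m,k}/\sqrt m$ with an Ornstein--Uhlenbeck type recursion whose mean-reversion rate is an arbitrarily large $\lambda$. A Gaussian comparison bound shows that the stationary magnitude of the latter is $O(\lambda^{-1/2})$. Letting $\lambda\to\infty$ gives $\max_k D_{n,k}^+=o_P(\sqrt n)$, and the sum constraint again controls the negative parts. This is the main obstacle. The conditions give only one-sided inequalities, and the drift of $\widehat{\bm\rho}_m$ is random and not bounded uniformly, so the argument must rely on time-changed Brownian approximations together with a careful stopping-time decomposition of the excursions of $D_{m,k}$ above $L_0$.

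\emph{Step 4 (conclusion).} Since $N_{n,k}/n\to v_k$, the martingale CLT with conditional variances $\sum_{j\le n}X_{j,k}\bm V_k/n\to v_k\bm V_k$ gives
\begin{equation*}
n^{-1/2}(\bm M_{n,1},\ldots,\bm M_{n,K})\overset{\Cal D}\to N\big(\bm 0,\operatorname{diag}(v_k\bm V_k)\big).
\end{equation*}
Dividing the $k$-th block by $N_{n,k}/n\to v_k$ yields \eqref{eqNormalityforTheta}. The delta method with Condition \ref{ConA} then gives $\sqrt n(\widehat{\bm\rho}_n-\bm v)\to N(\bm 0,\bm\Sigma)$. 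Combining this with the gap bound of Step 3 yields \eqref{eqNormality}.
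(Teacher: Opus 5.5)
Your proposal follows essentially the same route as the paper: strong consistency by a stochastic-approximation/Lyapunov argument (Theorem~\ref{Th1}); then, since consistency and (C2) make (C1) hold for every fixed $\lambda_0<0$, a last-exit comparison (Lemma~\ref{lem1}) that bounds $(N_{n,k}-n\widehat{\rho}_{n,k})^+$ by sums weighted with the OU-type kernel $(n/m)^{\lambda_0}$, combined with the strong approximation $\widehat{\bm\rho}_m-\bm v\approx\bm W(m)/m$ and a Gaussian comparison for the resulting term; finally $\lambda_0\to-\infty$ (Theorem~\ref{Th3}) and the delta method. The one ingredient you leave implicit in Step 3 is the allocation martingale $\sum_m(\bm X_m-\bm p_{m-1})$, a third source of fluctuation in $D_{m,k}$: the paper does not Gaussian-approximate it, because its conditional variances need not settle (e.g.\ for discontinuous rules such as ERADE), and instead bounds its weighted maximal sums directly by Burkholder's inequality, giving a contribution of order $(1-2\lambda_0)^{-1/4}\sqrt n$ that is also $o(\sqrt n)$ once $\lambda_0\to-\infty$.
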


\begin{theorem}\label{thLowerbound} Let $\bm I_k$ be the Fisher information matrix for parameter $\bm\theta_k$, and suppose that $\bm \xi_{1,k}$ is the natural sufficient statistics with $\Var(\bm\xi_{1,k})=\bm I_k^{-1}$. Then under Conditions \ref{ConA}-\ref{ConC}, the asymptotic variance-covariance matrix $\bm \Sigma$ of $\bm N_n/\sqrt{n}$ attains the Cr\'ame-Rao lower bound
\begin{equation}\label{eq:Lowerbound}\big(\frac{\partial\bm
\rho}{\partial\bm y}\big|_{\bm \Theta}\big)^{\prime} diag\Big( (v_1\bm I_1)^{-1},\ldots,(v_K\bm I_K)^{-1}\Big)
\frac{\partial\bm \rho}{\partial\bm y}\big|_{\bm \Theta}.
\end{equation}
\end{theorem}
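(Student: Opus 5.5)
Theorem \ref{thLowerbound} is a direct consequence of Theorem \ref{thNormality}: it suffices to identify the matrix $\bm\Sigma$ of (\ref{eqLB}) with the expression (\ref{eq:Lowerbound}). No further probabilistic argument is needed beyond what Theorem \ref{thNormality} already gives.

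The steps are as follows.

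First, Theorem \ref{thNormality} applies under Conditions \ref{ConA}--\ref{ConC}. It gives $n^{1/2}(\bm N_n/n-\bm v)\overset{\Cal D}\to N(\bm 0,\bm\Sigma)$, so the asymptotic variance-covariance matrix of $\bm N_n/\sqrt n$ (after centering by $\sqrt n\,\bm v$) is exactly $\bm\Sigma$.

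Second, by hypothesis $\bm\xi_{1,k}$ is the natural sufficient statistic with $\bm V_k=\Var(\bm\xi_{1,k})=\bm I_k^{-1}$. Then each diagonal block of $\bm V$ in (\ref{eqVarinceTheta}) is
$$\frac{1}{v_k}\bm V_k=\frac{1}{v_k}\bm I_k^{-1}=(v_k\bm I_k)^{-1}.$$
Substituting into (\ref{eqLB}) gives
$$\bm\Sigma=\Big(\frac{\partial\bm\rho}{\partial\bm y}\Big|_{\bm\Theta}\Big)'\, diag\big((v_1\bm I_1)^{-1},\ldots,(v_K\bm I_K)^{-1}\big)\,\frac{\partial\bm\rho}{\partial\bm y}\Big|_{\bm\Theta},$$
which is (\ref{eq:Lowerbound}).

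Third, we justify calling this the Cram\'er--Rao lower bound. With $N_{n,k}\approx nv_k$ observations on treatment $k$, the Fisher information for $\bm\theta_k$ is about $nv_k\bm I_k$. So the efficient asymptotic covariance of $\sqrt n(\widehat{\bm\Theta}_n-\bm\Theta)$ is $diag((v_k\bm I_k)^{-1})$, and (\ref{eqNormalityforTheta}) shows this is attained. The delta method, using differentiability from Condition \ref{ConA}, then makes (\ref{eq:Lowerbound}) the minimal asymptotic variance of any estimator of $\bm\rho(\bm\Theta)$. By \cite{HRZ06}, the same matrix is the lower bound for $\bm\Lambda(\bm\Theta)$ over all adaptive designs targeting $\bm\rho(\bm\Theta)$.

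There is no real obstacle here: the only care needed is to confirm that the efficiency assumption $\Var(\bm\xi_{1,k})=\bm I_k^{-1}$ matches the Bahadur representation (\ref{eq2.1}) used in Theorem \ref{thNormality}. This holds for exponential families with sample means of the natural statistics, or for MLEs via the preceding Remark.
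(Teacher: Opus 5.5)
Your proposal is correct and follows the paper's own argument: the paper likewise obtains Theorem \ref{thLowerbound} directly from Theorem \ref{thNormality} by substituting $\bm V_k=\bm I_k^{-1}$ into (\ref{eqLB}), and it relies on \cite{HRZ06} for the fact that this matrix is the lower bound. Your delta-method remark in the third step is only heuristic motivation. The actual lower-bound property for allocation proportions rests on \cite{HRZ06}, which you correctly cite.
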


 Theorem \ref{thLowerbound} follows from  Theorem \ref{thNormality} and the result of \cite{HRZ06}. (\ref{eqNormality}) and (\ref{eqNormalityforTheta}) give the asymptotic normalities of both the sample allocation proportions and the estimates of the distribution parameters. 
 We will also show that when the trials are stopped at random time and so the   fix sample size $n$  is replaced by a random sample size $\tau_n$,  the asymptotic normalities remains true.  Such conclusions are implied by general asymptotic properties given in Section \ref{sectAsmptotics}, where    the strong consistency, functional central limit theorems and the Gaussian approximation which are capable of various application circumstances, will be given. And Theorem \ref{thNormality} is a corollary of them.

It is easily seen that the EDBCD2 satisfies Condition \ref{ConC} with  $\gamma_{m,k}=0$, $\lambda_0=0$  and $\lambda_{m,k}=-\min_j\frac{\widehat{\rho}_{m,j}\wedge(1-\widehat{\rho}_{m,j})}{| N_{m,j}/m-\widehat{\rho}_{m,j}|^{\alpha}}$. 
For
 verifying that  EDBCD satisfies  Condition \ref{ConC}, we let $\varpi_k=\psi(\rho_k/x_k)$, $g_{k}=g_{k}\big(m,\bm x ,\bm \rho\big)$. Note $\varpi_k\ge 1$ when $x_k\le \rho_k$, and $\varpi_k\le 1$ when $x_k\ge\rho_k$, and  $\psi(x)\ge x^{\gamma}$. It follows that
 $$  \sum_{j=1}^K \rho_j\varpi_j\ge \sum_{j=1}^K \rho_j\big(\rho_j/x_j\big)^{\gamma}\ge 1, $$
where the last inequality is due to the fact that
 $$\min\Big\{\sum_{j=1}^K \rho_j\big(\rho_j/x_j\big)^{\gamma}: \sum_{j=1}^K x_j=\sum_{j=1}^K \rho_j=1, \rho_j> 0, x_j\ge 0, j=1,\ldots,K\Big\}=1.$$
   Hence
$$ g_k=\frac{\rho_k\varpi_k}{\sum_{j=1}^K \rho_j\varpi_j}\le \frac{\rho_k}{\sum_{j=1}^K \rho_j\varpi_j}\le \rho_k\; \text{ if } \; x_k>\rho_k. $$
So (C1) is satisfied with $\lambda_0=0$ and $\gamma_{m,k}\equiv 0$. Further, for $x_k>\rho_k$,
\begin{align*}
& g_k- \rho_k\le -\frac{ \rho_k}{\sum_{j=1}^K \rho_j\varpi_j}\sum_{j=1}^K \rho_j(\varpi_j-1) \\
=&-\frac{ \rho_k}{\sum_{j=1}^K \rho_j\varpi_j}\sum_{j=1}^K \rho_j(\varpi_j-1)I\{x_j< \rho_j\}+\frac{ \rho_k}{\sum_{j=1}^K \rho_j\varpi_j}\sum_{j=1}^K \rho_j(1-\varpi_j)I\{x_j> \rho_j\}\\
=&-\frac{ \rho_k}{\sum_{j=1}^K \rho_j\varpi_j}\sum_{j=1}^K( \rho_j-x_j)^+ \frac{\rho_j}{x_j}\frac{\varpi_j-1}{( \rho_j/x_j-1)}I\{x_j< \rho_j\}\\
&+\frac{ \rho_k}{\sum_{j=1}^K \rho_j\varpi_j}\sum_{j=1}^K( x_j-\rho_j)^+ \frac{\rho_j}{x_j}\frac{1-\varpi_j}{(1- \rho_j/x_j)}I\{x_j> \rho_j\}.
\end{align*}
Note as $\bm x\to \bm v$ and $\bm \rho\to \bm v$,
$$\frac{ \rho_k}{\sum_{j=1}^K \rho_j\varpi_j}\to v_k $$
$$\frac{\rho_j}{x_j}\frac{\varpi_j-1}{( \rho_j/x_j-1)}=\frac{\rho_j}{x_j}\frac{\sqrt{(\rho_j/x_j)^{2\gamma}-1}}{( \rho_j/x_j-1)}\to +\infty\;\; \text{ along }\; x_j< \rho_j, \;\; \text{ and} $$
$$\frac{\rho_j}{x_j}\frac{1-\varpi_j}{( 1-\rho_j/x_j)}=\frac{\rho_j}{x_j}\frac{1-(\rho_j/x_j)^{\gamma}}{( 1-\rho_j/x_j)}\to \gamma\;\; \text{ along }\; x_j> \rho_j. $$
It follows that, for any $\lambda>0$, whenever $\|\bm x-\bm v\|+\|\bm \rho-\bm v\|$ is small enough we will have
\begin{align*}
g_k- \rho_k\le &   -(\lambda+2\gamma) \sum_{j=1}^K( \rho_j-x_j)^++ 2\gamma  \sum_{j=1}^K( x_j-\rho_j)^+ \\
 = &-\lambda \sum_{j=1}^K(x_j- \rho_j)^+\le -\lambda (x_k- \rho_k), \;\; x_k> \rho_k,
\end{align*}
because $\sum_{j=1}^K(x_j- \rho_j)^+-\sum_{j=1}^K( \rho_j-x_j)^+=\sum_{j=1}^K(x_j- \rho_j)=1-1=0$. Condition (C2) is satisfied.
Condition \ref{ConC} is verified. Hence, the randomization procedure is efficient as long as the parameter estimate $\widehat{\bm\Theta}_n$ is efficient.
\begin{corollary}\label{Cor3.1}  For EDBCD (Procedure \ref{AP:B}) and EDBCD2 (Procedure \ref{AP:C}), under Conditions \ref{ConA} and \ref{ConB}, (\ref{eqNormality}) and (\ref{eqNormalityforTheta}) hold.
\end{corollary}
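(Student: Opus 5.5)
The corollary will be obtained by invoking Theorem \ref{thNormality}: Conditions \ref{ConA} and \ref{ConB} are assumed, so all that remains is to check Condition \ref{ConC} for the allocation functions (\ref{eqEprade1.2}) and (\ref{eqEprade3.1}). In both cases we take $\gamma_{m,k}\equiv 0$, $\lambda_0=0$ and $L_0=0$. We evaluate the allocation function at $\bm x=\bm N_m/m$ and $\bm\rho=\widehat{\bm\rho}_m$. The condition $N_{m,k}-m\widehat\rho_{m,k}\ge L_0$ then reads $x_k\ge\rho_k$.

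For EDBCD2 the verification is a direct identity. Put
$$\lambda_{m,k}=-\min_j\frac{\widehat\rho_{m,j}\wedge(1-\widehat\rho_{m,j})}{|N_{m,j}/m-\widehat\rho_{m,j}|^{\alpha}}\le 0 .$$
Then (\ref{eqEprade3.1}) says exactly $p_{m,k}=\widehat\rho_{m,k}+\lambda_{m,k}(N_{m,k}/m-\widehat\rho_{m,k})$, so (\ref{eqConditionforEff}) holds with equality, and (C1) holds since $\lambda_{m,k}\le 0$.

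For (C2), work on the event $\{\bm N_m/m\to\bm v,\ \widehat{\bm\rho}_m\to\bm v\}$. Each numerator tends to $v_j\wedge(1-v_j)>0$. Each denominator tends to $0$, with the convention $0/0=0$ handled separately: a coordinate with $x_j=\rho_j$ contributes a zero term only when that numerator is also zero. Hence $\lambda_{m,k}\to-\infty$.

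We must also make sure $g_k\in[0,1]$, so that the procedure is well defined. The coefficient $c=\min_j(\cdot)$ satisfies $c|x_k-\rho_k|\le(\rho_k\wedge(1-\rho_k))|x_k-\rho_k|^{1-\alpha}\le\rho_k\wedge(1-\rho_k)$, since $|x_k-\rho_k|\le 1$. I would note this check briefly.

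For EDBCD we follow the computation preceding the corollary. First, the inequality $\sum_j\rho_j\varpi_j\ge\sum_j\rho_j(\rho_j/x_j)^\gamma\ge 1$ gives $g_k\le\rho_k$ whenever $x_k\ge\rho_k$. The second inequality follows from Jensen's inequality: $t\mapsto t^{-\gamma}$ is convex, so with weights $\rho_j$,
$$\sum_j\rho_j(x_j/\rho_j)^{-\gamma}\ge\Big(\sum_j x_j\Big)^{-\gamma}=1 .$$
So (\ref{eqConditionforEff}) holds with $\lambda_{m,k}$ defined as $(g_k-\rho_k)/(x_k-\rho_k)\le 0$ when $x_k>\rho_k$ (and, say, $-\infty$-free $0$ when $x_k=\rho_k$), and (C1) holds.

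For (C2) we use the displayed decomposition of $g_k-\rho_k$. On $x_j<\rho_j$ the factor $\frac{\rho_j}{x_j}\frac{\sqrt{(\rho_j/x_j)^{2\gamma}-1}}{\rho_j/x_j-1}$ behaves like $\sqrt{2\gamma}/\sqrt{\rho_j/x_j-1}\to\infty$. On $x_j>\rho_j$ the corresponding factor tends to $\gamma$. Combining these with $\sum_j(x_j-\rho_j)^+=\sum_j(\rho_j-x_j)^+$ gives $g_k-\rho_k\le-\lambda(x_k-\rho_k)$ for any prescribed $\lambda$ once $\bm x,\bm\rho$ are close to $\bm v$. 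Hence $\lambda_{m,k}\to-\infty$ on that event.

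The only delicate point, and the main obstacle, is the uniformity of these limits. The ratio must blow up \emph{uniformly} as $(\bm x,\bm\rho)\to(\bm v,\bm v)$ jointly, not along a fixed path. This holds because each ratio is a function of the single variable $t=\rho_j/x_j\to 1$ alone. A Taylor bound $t^{2\gamma}-1\ge c(t-1)$ for $t\in[1,2]$ then gives the divergence uniformly. With Condition \ref{ConC} verified for both procedures, Theorem \ref{thNormality} yields (\ref{eqNormality}) and (\ref{eqNormalityforTheta}).
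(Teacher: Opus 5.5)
Your proposal is correct and follows the paper's own route. You verify Condition~\ref{ConC} for both allocation functions and then invoke Theorem~\ref{thNormality}: for EDBCD2 through the exact identity $p_{m,k}=\widehat\rho_{m,k}+\lambda_{m,k}(N_{m,k}/m-\widehat\rho_{m,k})$ with $\lambda_{m,k}=-\min_j(\cdot)\to-\infty$, and for EDBCD through $g_k\le\rho_k$ together with the decomposition of $g_k-\rho_k$, whose coefficients blow up on $\{x_j<\rho_j\}$ and tend to $\gamma$ on $\{x_j>\rho_j\}$. Your Jensen justification, the check $g_k\in[0,1]$ and the uniformity remark fill in details the paper leaves implicit.

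One small slip concerns ties $N_{m,k}=m\widehat\rho_{m,k}$: you should not set $\lambda_{m,k}=0$ there. If ties recur infinitely often, this contradicts (C2) as literally stated. Since the required inequality on a tie is just $g_k\le\rho_k$, which you have already shown, take instead e.g.\ $\lambda_{m,k}=-m$ whenever $N_{m,k}\le m\widehat\rho_{m,k}$. This is in the same spirit as the $(\cdot)^+$ in the denominator of $\lambda_{m,k}$ after Example~\ref{example1}.
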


Condition \ref{ConC} or C$^{\prime}$ provides us a flexible framework to define various ERADEs. It is easily seen that Hu, Zhang and He's procedure satisfies Condition \ref{ConC} (see Example \ref{example1} bellow). Besides the randomization  Procedures \ref{AP:B} and \ref{AP:C},  we can define other efficient response-adaptive randomization procedures by suitably choosing the allocation function. The first is a multi-treatment generalization of Hu, Zhang and He's procedure.

\begin{example}\label{example1} \citep{Zhang2024} Let $0<\alpha<1$. Write $G_k=N_{m,k}/m-\widehat{\rho}_{m,k}$. Define
$$ p_{m,k}=g_k\big(m,\frac{\bm
N_m}{m},\widehat{\bm \rho}_m\big)=\begin{cases}\alpha\widehat{\rho}_{m,k}, & \ G_k>0, \\
\widehat{\rho}_{m,k}, & \ G_k=0,\\
\beta\widehat{\rho}_{m,k}, &\   G_k<0,
\end{cases}
$$
where
$$\beta=\frac{1-\alpha\sum\limits_{j:G_j>0}\widehat{\rho}_{m,j}-\sum\limits_{j:G_j=0}\widehat{\rho}_{m,j}}{\sum\limits_{j:G_j<0}\widehat{\rho}_{m,j}}. $$
When $K=2$,  this procedure is the procedure  of \cite{HZH09}. In \cite{AHZ2025}, the definition of the allocation probability is a little different, where, when $G_k<0$, the probability $p_{m,k}$ is defined to be 
$$\frac{(1-\alpha)\sum_{j:G_j>0}\widehat{\rho}_{m,j}}{|\{j:G_j<0\}|}+\widehat{\rho}_{m,k}. $$
\end{example}

 It is easily seen that, if $N_{m,k}/m>\widehat{\rho}_{m,k}$, then
$$ p_{m,k}=\alpha \widehat{\rho}_{m,k}= \widehat{\rho}_{m,k}-\frac{(1-\alpha)\widehat{\rho}_{m,k}}{N_{m,k}/m-\widehat{\rho}_{m,k}}(N_{m,k}/m-\widehat{\rho}_{m,k}). $$
Condition \ref{ConC} is satisfied with $\lambda_0=0$, $\gamma_{m,k}\equiv 0$ and $\lambda_{m,k}=-\frac{(1-\alpha)\widehat{\rho}_{m,k}}{(N_{m,k}/m-\widehat{\rho}_{m,k})^+}$. Here $x^+=\max\{x,0\}$.

\bigskip

\begin{example}\label{example2}  {\em  (A step-up procedure)} Let $G_k=N_{m,k}-m\widehat{\rho}_{m,k}$ (or $G_k=N_{m,k}/(m\widehat{\rho}_{m,k})-1$), $k=1,2,\ldots,K$. One can rank treatments according to their values, treatment (1) having minimal $G_k$, treatment (2) having the next smallest $G_k$, etc., so that $(s)<(t)$ iff $G_{(s)}\le G_{(t)}$. In the case of ties a random ordering can be determined. 

Let $\alpha_1\ge \alpha_2\ge\cdots\ge \alpha_K>\alpha>0$. Define the weights of allocation probabilities for treatments $(1),(2),\ldots, (K)$ in a step-up way as
$$ \psi_{(k)}=\alpha_k\; \text{ if }\;  G_{(k)}\le 0\;\; \text{ and }\;\; \psi_{(k)}=\alpha\; \text{ if }\; G_{(k)}>0.  $$
Then the allocation probabilities are defined to be the ratios of the weighted targets
$$ p_{m,k}=g_{k}\big(m,\frac{\bm
N_m}{m},\widehat{\bm \rho}_m\big)=\frac{ \widehat{\rho}_{m,k}\psi_k}{\sum_{j=1}^K\widehat{\rho}_{m,j}\psi_j}.
$$
\end{example}
It is easily seen that, if $N_{m,k}/m>\widehat{\rho}_{m,k}$, then
 \begin{align*}
 p_{m,k}=& \frac{ \alpha\widehat{\rho}_{m,k} }{\sum_{j=1}^{k_0-1}\widehat{\rho}_{m,(j)}\alpha_j+\alpha\sum_{j=k_0}^K\widehat{\rho}_{m,(j)}}\le \frac{ \alpha\widehat{\rho}_{m,k} }{\alpha_K\sum_{j=1}^{k_0-1}\widehat{\rho}_{m,(j)}+\alpha\sum_{j=k_0}^K\widehat{\rho}_{m,(j)}}\\
 =& \widehat{\rho}_{m,k}-\frac{ \widehat{\rho}_{m,k}(\alpha_K-\alpha)\sum_{j=1}^{k_0-1}\widehat{\rho}_{m,(j) }}{\alpha_K\sum_{j=1}^{k_0-1}\widehat{\rho}_{m,(j)}+\alpha\sum_{j=k_0}^K\widehat{\rho}_{m,(j)}}
 \le \widehat{\rho}_{m,k}-\frac{ (\alpha_K-\alpha)\min_j\widehat{\rho}_{m,j }^2 }{\alpha_K}.
 \end{align*}
Condition \ref{ConC} is satisfied is satisfied with $\lambda_0=0$ and $\lambda_{m,k}=\frac{ (\alpha_K-\alpha)\min_j\widehat{\rho}_{m,j }^2}{\alpha_K(N_{m,k}/m-\widehat{\rho}_{m,k})^+}$.

\begin{example} \label{example3} {\em  (A step-down procedure)}  Let $0\le \alpha_K\le \alpha_{K-1}\le\cdots\le \alpha_1<\alpha$.  Define the weights of allocation probabilities for treatments $(K),(K-1),\ldots, (1)$ in a step-down way as
$$ \psi_{(k)}=\alpha_k \; \text{ if }\; G_{(k)}>0   \;\;\text{ and }\;\; \psi_{(k)}=\alpha\;\text{ if } \; G_{(k)}\le 0   $$
and let
$$ p_{m,k}=g_{k}\big(m,\frac{\bm
N_m}{m},\widehat{\bm \rho}_m\big)=\frac{ \widehat{\rho}_{m,k}\psi_k}{\sum_{j=1}^K\widehat{\rho}_{m,j}\psi_j}.
$$
\end{example}
Similarly as in Example \ref{example2},  Condition C$^{\prime}$  is satisfied.

\bigskip
A general allocation procedure can  defined as follows such that the allocation probability $p_{m,k}$  is proportional to the weighted target proportion $\widehat{\rho}_{m,k}$ with  the weight being a non-increasing function of $N_{m,k}-m\widehat{\rho}_{m,k}$.

\begin{example}\label{example4}  Let $\psi(\cdot)\ge 0$ be a monotone non-increasing function on $\mathcal{R}$ with $\psi(0)>\psi(+\infty)$. Define
\begin{equation}\label{eqexample4} p_{m,k}=g_{k}\big(m,\frac{\bm
N_m}{m},\widehat{\bm \rho}_m\big)=\frac{ \widehat{\rho}_{m,k}\psi(N_{m,k}-m\widehat{\rho}_{m,k})}{\sum_{j=1}^K\widehat{\rho}_{m,j}
\psi(N_{m,j}-m\widehat{\rho}_{m,j})}.
\end{equation}
In practices,  one can choose $\psi(x)=e^{-x}$ or $1-\Phi(x)$, where $\Phi(x)$ is a standard normal distribution. 
\end{example}

For verifying that the allocation probability in (\ref{eqexample4}) satisfies the Condition \ref{ConC}, we let $\alpha=\psi(+\infty)$, $\beta=\psi(0)$. Choose an $\epsilon$ with $0<\epsilon<(\beta-\alpha)\underset{m,j}\min \,\widehat{\rho}_{m,j}/2$ and a $L_0$ such that
$\psi(L_0)<\alpha+\epsilon$. Write $G_k=N_{m,k}/m-\widehat{\rho}_{m,k}$. If $N_{m,k}-m\widehat{\rho}_{m,k}>L_0$, then
\begin{align*}
p_{m,k}\le& \frac{(\alpha+\epsilon)\widehat{\rho}_{m,k}}{\sum_{j:G_j\le 0}\beta \widehat{\rho}_{m,j}+\alpha\sum_{j:G_j<0}\widehat{\rho}_{m,j}}\le \frac{(\alpha+\epsilon)\widehat{\rho}_{m,k}}{(\beta-\alpha)\min_j \widehat{\rho}_{m,j}+\alpha }\\
\le & \widehat{\rho}_{m,k}-\widehat{\rho}_{m,k}\frac{(\beta-\alpha)\min_j \widehat{\rho}_{m,j}-\epsilon}{(\beta-\alpha)\min_j \widehat{\rho}_{m,j}+\alpha }
\le \widehat{\rho}_{m,k}-\widehat{\rho}_{m,k}\frac{(\beta-\alpha)\min_j \widehat{\rho}_{m,j} }{2\beta }.
\end{align*}
Condition \ref{ConC} is verified.

 In (\ref{eqexample4}), a  function $\psi(x)=\frac{1}{|x|^{\alpha}+1}$ $(x>0)$, $\psi(-x)=1-\psi(x)$ ($x<0$)  is considered   for balancing two treatments ($\widehat{\rho}_{m,k}\equiv 1/2$) by \cite{AG04}, who
  modified Efron's procedure and  proposed a class of adjustable biased coin designs (ABCD) by defining the allocation probability as a decreasing function of the treatment  imbalance $(2N_{m,1}-m)$. \cite{AG04}    showed that within the class of ABCDs suitable designs can be chosen with less predictability and expected
imbalance than the Efron's BCD that only utilizes  the sign of $(2N_{m,1}-m)$ in the allocation probability. ABCD is also showed to have advantages over the designs of \cite{Wei78b} and \cite{Smith84a,Smith84b} in term of efficiency and power (c.f.,  \cite{Antognini08}).   By letting $\widehat{\rho}_{m,k}=1/K$, our allocation procedure with (\ref{eqexample4}) is a multiple-treatment generalization of the ABCDs.

The difference between the efficient DBCDs (Procedures \ref{AP:B} and \ref{AP:C}) and those in Example \ref{example1}-\ref{example4} is that Procedures \ref{AP:B} and \ref{AP:C} use an allocation function which is continuous both in the current sample allocation proportions $\bm N_m/m$ and the sequential  estimator $\widehat{\bm \rho}_m$ of the target allocation proportions to randomize patients sequentially. Besides the efficiency, such procedures have also good properties in term of randomness, which will be discussed in the next two sections.

\section{Degree of randomness  of a response-adaptive design}\label{sectSelectionbias}
\setcounter{equation}{0}

{\em \ref{sectSelectionbias}.1. Optimal value of selection bias and entropy}.  The selection bias (lack of randomness) of the design shows in the possibility for the experimenter to guess partially the sequence of treatment allocations. As advocated in \cite{Efron71},  \cite{AG04}, etc,  a natural measure of the selection bias of a sequential design is  the expected percentage of correct guesses the experimenter can make if he guesses optimally.
We let $J_m =1$ if the $m$th assignment
is guessed correctly, and $J_m =0$ otherwise.
The selection bias of the design is then defined by
$$SB_n=\ep\left[\frac{1}{n}\sum_{m=1}^n J_m\right]=\frac{1}{n}\sum_{m=1}^n \pr(J_m=1). $$
Every guessing strategy is equally useless against complete randomization, yielding the expected percentage
of correct guesses $SB_n=\frac{1}{K}$ in any trials with $K$ treatments, and this is the minimum value of the selection bias of randomization procedures for balancing $K$ treatments.   For response-adaptive designs, the optimal guessing strategy is to guess the under-represented treatment, at each step,
on the basis of which has the highest probability
of being allocated, with no preferred guess in case of a tie. Thus, for   the optimal guessing strategy against  an adaptive design with allocation probabilities $p_{m,k}$s, the guess corrected probability of the $m$th assignment is
$$ \pr(J_m=1|\mathscr{F}_{m-1})=\max_k\pr(X_{m,k}=1|\mathscr{F}_{m-1})=\max_k p_{m-1,k}, $$
where $\mathscr{F}_{m-1}$ is the history sigma field.
  Hence $SB_n= \frac{1}{n}\sum_{m=1}^n \ep[\max_kp_{m-1,k}]$. It is obvious that $\frac{1}{K}\le SB_n\le 1$. When the allocation is not balanced, the minimum value  $1/K$ can not be attained.
 We define $SB=\lim_nSB_n$ to be the asymptotic selection bias if the limit exists.  The next theorem tells us that, for   adaptive designs with a target allocation proportion $\bm \rho(\bm\Theta)$,  the lower bound of the asymptotic selection bias is $\max_k\rho_k(\bm\Theta)$. The distance between the selection bias and the lower bound is a measure of the degree of lack of randomness.

\begin{theorem}\label{thSB} For any adaptive design, if $\bm N_n/n\to \bm \rho(\bm\Theta)$ in probability, then \begin{equation}\label{eqthSB1.1}\liminf_{n\to \infty} SB_n\ge \max_k \rho_k(\bm \Theta).
\end{equation}
Further
\begin{equation}\label{eqthSB1.2}\liminf_{n\to \infty}\frac{1}{n}  \sum_{m=1}^n J_m\ge \max_k \rho_k(\bm \Theta)\;\; a.s.\; \text{ if }\; \frac{\bm N_n}{n}\to \bm \rho(\bm\Theta)\;\;a.s.
\end{equation}
\end{theorem}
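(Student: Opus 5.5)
Fix $k^*$ with $\rho_{k^*}(\bm\Theta)=\max_k\rho_k(\bm\Theta)$. The idea is to bound the best guess from below by the guess ``always treatment $k^*$'' and then relate the success rate of that fixed guess to $N_{n,k^*}/n$. Since $\max_k p_{m-1,k}\ge p_{m-1,k^*}$ and $\pr(J_m=1|\mathscr{F}_{m-1})=\max_k p_{m-1,k}$, we get
$$SB_n=\frac1n\sum_{m=1}^n\ep\big[\max_k p_{m-1,k}\big]\ge \frac1n\sum_{m=1}^n\ep[p_{m-1,k^*}]=\frac1n\sum_{m=1}^n\ep[X_{m,k^*}]=\ep\Big[\frac{N_{n,k^*}}{n}\Big].$$
The identity in the middle uses $\ep[X_{m,k^*}|\mathscr{F}_{m-1}]=p_{m-1,k^*}$ and the tower property. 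Since $0\le N_{n,k^*}/n\le 1$ and $N_{n,k^*}/n\to\rho_{k^*}(\bm\Theta)$ in probability, bounded convergence gives $\ep[N_{n,k^*}/n]\to\rho_{k^*}(\bm\Theta)$. This proves (\ref{eqthSB1.1}).

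For the almost sure statement (\ref{eqthSB1.2}), I write $J_m-\pr(J_m=1|\mathscr{F}_{m-1})=:\Delta M_m$ and $X_{m,k^*}-p_{m-1,k^*}=:\Delta M'_m$. Both are martingale differences bounded by $1$ in absolute value. Here $\mathscr{F}_{m}$ is taken to include the guess, which is $\mathscr{F}_{m-1}$-measurable, and the assignment. By the strong law of large numbers for bounded martingale differences (e.g.\ via $\sum_m \ep[(\Delta M_m)^2]/m^2<\infty$ and Kronecker's lemma), $\frac1n\sum_{m\le n}\Delta M_m\to0$ and $\frac1n\sum_{m\le n}\Delta M'_m\to0$ a.s. Therefore
$$\frac1n\sum_{m=1}^n J_m=\frac1n\sum_{m=1}^n\max_kp_{m-1,k}+o(1)\ge\frac1n\sum_{m=1}^n p_{m-1,k^*}+o(1)=\frac{N_{n,k^*}}{n}+o(1)\quad a.s.,$$
and the right-hand side tends to $\rho_{k^*}(\bm\Theta)$ a.s. by assumption, which gives (\ref{eqthSB1.2}).

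The argument does not need the optimal guessing strategy to be exactly the one described. The lower bound holds for the optimal guess because the optimal guess dominates the fixed guess $k^*$ conditionally at each step. The only step requiring care is the martingale strong law, and the boundedness of the increments makes it routine. I do not expect any real obstacle beyond stating the filtration correctly so that the guess is predictable; then $\pr(J_m=1|\mathscr{F}_{m-1})$ is well defined and equals $\max_kp_{m-1,k}$ under optimal guessing (or is $\ge p_{m-1,k^*}$ in any case).
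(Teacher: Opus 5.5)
Your proof is correct and follows essentially the same route as the paper. Both bound $\max_k p_{m-1,k}$ below by $p_{m-1,k}$ (you fix the maximizing $k^*$, while the paper argues for every $k$), then take expectations with bounded convergence for the first claim, and use the strong law for the bounded martingale differences $J_m-\ep[J_m|\mathscr{F}_{m-1}]$ and $X_{m,k}-p_{m-1,k}$ for the almost sure claim.
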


Besides the selection bias, the entropy is also a good measure of the randomness. For a set of probabilities $\bm p=(p_1,\ldots,p_K)$, the entropy of $\bm p$ is defined by
$$ H(\bm p)=-\sum_{k=1}^K p_k\log p_k. $$
\begin{definition}
The entropy of the design with allocation probabilities $p_{m,k}$s is  defined by
$$Ent_n=\ep\left[\frac{1}{n}\sum_{m=1}^n H(\bm p_{m-1})\right] \text{ where } \bm p_{m}=(p_{m,1},\ldots,p_{m,k}). $$
And,   $Ent=\lim_nEnt_n$ is defined to be the asymptotic entropy if the limit exists.
\end{definition}
A larger entropy means more randomness.
It is obvious that $0\le Ent_n\le \log K$. $Ent_n$ takes its largest value $\log K$ when and only when the randomization is a complete randomization in which $p_{m,k}=1/K$ for all $k$ and $m$. $Ent_n$ takes its smallest value $0$ when and only when each patient is assigned deterministically.

 Note $  -x\log x\le -x_0\log x_0-(1+\log x_0)(x-x_0)-\frac{1}{2}(x-x_0)^2$ ($0\le x,x_0\le 1$). We have
\begin{align}\label{eq:entropy1}
\frac{1}{n}\sum_{m=1}^n H(\bm p_{m-1})-H(\bm v)\le & -\sum_{k=1}^K \frac{1+\log v_k}{n}\sum_{m=1}^n(p_{m-1,k}-v_k)-\frac{1}{2}\sum_{k=1}^n(p_{m-1,k}-v_k)^2\nonumber\\
=& \sum_{k=1}^K    \frac{\log v_k^{-1}}{n}\sum_{m=1}^n(p_{m-1,k}-v_k)-\frac{1}{2}\sum_{k=1}^n(p_{m-1,k}-v_k)^2,
\end{align}
where  $\bm v=\bm\rho(\bm\Theta)$. It follows that
\begin{equation}\label{eq:entropy2} Ent_n-H(\bm v)\le \sum_{k=1}^K \log v_k^{-1}\left(\ep\big[\frac{N_{n,k}}{n}\big]-v_k \right)-\frac{1}{2n}\sum_{m=1}^n \ep[(p_{m-1,k}-v_k)^2].
\end{equation}
Hence, under the restriction that $\ep\big[\frac{\bm N_n}{n}\big]=\bm v$,   $Ent_n$ attains its largest value $H(\bm v)$ when and only when $p_{m-1,k}=v_k$ a.s. for all $m$ and $k$. In this case, the randomization procedure is  purply random in sense that the patients are assigned independently and with the same probabilities $v_k$, $k=1,\ldots,K$.

In general, for a random vector $\bm Z$ with its probability density function or probability mass function $f(\bm Z)$, the Shannon entropy of $\bm Z$ is defined by
$$ H(\bm Z)=-\ep\left[\log f(\bm Z)\right]=-\int f(\bm z)\log f(\bm z)d\bm z. $$
In a randomized adaptive design, the collected data is $\bm Z_n=(X_{m,k}, X_{m,k}\bm\xi_{m,k};k=1,\ldots,K, m=1,\ldots, n)$ with the distribution as
$$ f(\bm Z)=\prod_{m=1}^n\prod_{k=1}^K\big(p_{m-1,k} f_k(\bm \xi_{m,k})\big)^{X_{m,k}}, $$
where $f_k(\bm \xi_{m,k})$ is the probability density function or probability mass function of $\bm \xi_{m,k}$. Hence,
\begin{align*}
 & H(\bm Z_n)=  -\ep\left[\sum_{m=1}^n \sum_{k=1}^K X_{m,k}\big(\log p_{m-1,k}+\log f_k(\bm \xi_{m,k})\big)\right]\\
 =& \ep\left[\sum_{m=1}^n H(\bm p_{m-1})\right]+\sum_{k=1}^K\ep[N_{n,k}]H(\bm\xi_{1,k})=n \left( Ent_n+ \sum_{k=1}^K\ep\big[\frac{N_{n,k}}{n}\big]H(\bm\xi_{1,k})\right).
 \end{align*}
 $H(\bm Z_n)$ can be regarded as a measure of the randomness of the whole system in a randomization design.  According to the above formula, the randomness of the whole system consists of two parts, one of which is the randomness of the allocations which is measured by $Ent_n$, and the other is the randomness of the responses which is measured by $H(\bm\xi_{1,k})$s. Under the restriction that $\ep\big[\frac{\bm N_n}{n}\big]=\bm v$,   $H(\bm Z_n)$ attains its largest value $n[H(\bm v)+\sum_{k=1}^Kv_kH(\bm \xi_{1,k})]$ when and only when $p_{m-1,k}=v_k$ a.s. for all $m$ and $k$. Further, if $\frac{\bm N_n}n\to \bm v$ in probability, then $\lim_n H(\bm Z_n)/n=\lim_n Ent_n+\sum_{k=1}^Kv_k H(\bm \xi_{1,k})$.

\begin{theorem}\label{thEntropy} (a) For any adaptive design, if $\bm N_n/n\to \bm \rho(\bm\Theta)$ in probability, then \begin{equation}\label{eqthEntropy1.1}\limsup_{n\to \infty}Ent_n\le  H\left(\bm \rho(\bm \Theta)\right).
\end{equation}
(b) If $\bm N_n/n\to \bm \rho(\bm\Theta)$ a.s., then
\begin{equation}\label{eqthEntropy1.2}\limsup_{n\to \infty}\frac{1}{n}\sum_{m=1}^n H(\bm p_{m-1})\le H\left(\rho(\bm \Theta)\right)\;\; a.s.
\end{equation}
(c) Suppose $\bm N_n/n\to \bm \rho(\bm\Theta)$ in probability. Then
\begin{equation}\label{eqthEntropy1.3}\lim_{n\to \infty}Ent_n=  H\left(\bm \rho(\bm \Theta)\right) \text{ if and only if } \frac{1}{n}\sum_{m=0}^{n-1}
\|\bm p_{m}-\bm \rho(\bm \Theta)\|\to 0 \text{ in probability}.
\end{equation}
(d) Suppose $\bm N_n/n\to \bm \rho(\bm\Theta)$ in probability and $Ent_n\to H\big(\bm \rho(\bm\Theta)\big)$. Then
$SB_n\to \max_k\rho_k(\bm \Theta)$.
\end{theorem}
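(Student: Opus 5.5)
The plan is to build everything on the concavity inequality already recorded in \eqref{eq:entropy1}. The key identity is
\[
\frac{N_{n,k}}{n}-\frac{1}{n}\sum_{m=1}^n p_{m-1,k}=\frac{1}{n}\sum_{m=1}^n\big(X_{m,k}-p_{m-1,k}\big).
\]
The right-hand side is an average of bounded martingale differences, so it tends to $0$ almost surely and in $L^1$. Consequently $\bar{\bm p}_n:=\frac1n\sum_{m=1}^n \bm p_{m-1}$ has the same limit as $\bm N_n/n$, in probability or almost surely as the hypothesis dictates. Write $\bm v=\bm\rho(\bm\Theta)$.

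\textbf{Parts (a) and (b).} In \eqref{eq:entropy1}, drop the negative quadratic term. What remains is
\[
\frac1n\sum_{m=1}^n H(\bm p_{m-1})-H(\bm v)\le \sum_k \log v_k^{-1}\big(\bar p_{n,k}-v_k\big).
\]
Since $v_k>0$, this tends to $0$ almost surely under the hypothesis of (b), which proves (b). For (a), take expectations. The right-hand side is bounded and tends to $0$ in probability, so bounded convergence makes its expectation vanish.

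\textbf{Part (c).} Keep the quadratic term in \eqref{eq:entropy1}. Call the linear term $\Delta_n$; it is bounded and $\to 0$ in probability by the above. This gives
\[
0\le \frac{1}{2n}\sum_{m=1}^n\|\bm p_{m-1}-\bm v\|^2\le H(\bm v)-\frac1n\sum_{m=1}^n H(\bm p_{m-1})+\Delta_n .
\]

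\emph{Only if.} Assume $Ent_n\to H(\bm v)$. Take expectations: the expected quadratic average tends to $0$. By Cauchy--Schwarz, $\frac1n\sum_m\|\bm p_{m-1}-\bm v\|\to 0$ in $L^1$, hence in probability.

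\emph{If.} $H$ is uniformly continuous on the simplex. So for every $\epsilon>0$ there is $\delta>0$ with $|H(\bm p)-H(\bm v)|\le \epsilon$ whenever $\|\bm p-\bm v\|\le\delta$. On the remaining indices, bound $|H(\bm p_{m-1})-H(\bm v)|$ by $2\log K$. The proportion of indices with $\|\bm p_{m-1}-\bm v\|>\delta$ is at most $\delta^{-1}\frac1n\sum_m\|\bm p_{m-1}-\bm v\|$ by Markov's inequality, and this tends to $0$ in probability. Hence $\frac1n\sum_m H(\bm p_{m-1})\to H(\bm v)$ in probability, and bounded convergence gives $Ent_n\to H(\bm v)$.

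\textbf{Part (d).} By (c), $\frac1n\sum_m\|\bm p_{m-1}-\bm v\|\to0$ in probability, and this quantity is bounded, so the convergence also holds in $L^1$. The map $\bm p\mapsto \max_k p_k$ is $1$-Lipschitz in the sup norm. Therefore
\[
\Big|SB_n-\max_k v_k\Big|\le \frac1n\sum_m \ep\|\bm p_{m-1}-\bm v\|\to0 .
\]

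The main step needing care is the "if" direction of (c): turning $L^1$-Cesàro closeness of $\bm p_m$ to $\bm v$ into closeness of entropies. Uniform continuity of $H$ handles this, because near the boundary $H$ is continuous but not Lipschitz. The remaining steps are direct consequences of the martingale identity and \eqref{eq:entropy1}.
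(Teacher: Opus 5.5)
Your proof is correct and follows essentially the same route as the paper: the strongly concave bound \eqref{eq:entropy1} plus the martingale identity linking $\bm N_n/n$ to the Ces\`aro average of $\bm p_{m-1}$ for (a)--(b), uniform continuity of $H$ with a Markov-type counting bound for the ``if'' direction of (c), the retained quadratic term with Cauchy--Schwarz for the ``only if'' direction, and the Lipschitz property of $\max_k$ for (d). Your write-up is slightly more careful than the paper's. It keeps the constant bounding $|H(\bm p_{m-1})-H(\bm v)|$ on the indices where $\|\bm p_{m-1}-\bm v\|>\delta$, which the paper's displayed estimate omits, and it writes the linear term with the correct sign $\log v_k^{-1}$.
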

Theorem \ref{thEntropy} (a) tells us that, for  adaptive designs with a same target allocation proportion $\bm \rho(\bm\Theta)$,  the upper  bound of the asymptotic entropy is $H\big(\bm\rho(\bm\Theta)\big)$, and Theorem \ref{thEntropy} (c) gives the sufficient and necessary condition for an adaptive design to attain this upper bound. Theorem \ref{thEntropy} (d) tells us that if the asymptotic entropy of a adaptive design is optimal, then the  selection bias is also optimal.

\begin{definition} For a  target allocation proportion $\bm\rho(\bm\Theta)$, the lower bound $\max_k\rho_k(\bm\Theta)$  of the asymptotic selection bias is called the optimal selection bias, and, the upper bound $H\big(\bm\rho(\bm\Theta)\big)$ of the asymptotic entropy is called the optimal entropy. An adaptive design of which the asymptotic selection bias attains the  optimal one is called the most random adaptive design.
\end{definition}

{\em \ref{sectSelectionbias}.2. Degree of Randomness of Hu, Zhang and He's procedure}. Intuitively, the value of $\alpha$ in the procedure  of \cite{HZH09}  (c.f., (\ref{eqHuZhHe})) reflects the degree of randomization, which is also verified by simulation studies.
   The next theorem gives a theoretical result on the selection bias and entropy of this procedure.
\begin{theorem} \label{thSBHuZhHe} Consider Hu, Zhang and He's procedure for two-treatment clinical trials with allocation probability defined in (\ref{eqHuZhHe}).  Without loss of generality, we assume $v_1=:\rho_1(\bm\Theta)\ge 1/2$. Suppose that $\rho_1(\widehat{\bm\Theta}_m)$ has no mass at a rational point, i.e., $\pr(\rho_1(\widehat{\bm\Theta}_m)=v)=0$   for any rational $v\in[0,1]$. This condition will be satisfied when the sets $\{\bm y_1: \rho_1(\bm y_1, \bm y_2)=v$ for fixed $\bm y_2\}$, $\{\bm y_2: \rho_1(\bm y_1, \bm y_2)=v$ for fixed $\bm y_1\}$ have Lebesgue measure zeros and the estimator $\widehat{\bm\Theta}_m$ is given by (\ref{eq2.2}) with continuous responses. Then for   the optimal guessing strategy we have
\begin{equation} \lim_{n\to \infty} SB_n=SB(\alpha)\;\;\text{ and }\;\; \lim_{n\to \infty}\frac{1}{n}  \sum_{m=1}^n J_m=SB(\alpha) \; a.s.,
\end{equation}
where
$$SB(\alpha)=\left\{\begin{array}{ll} v_1+(1-v_1) \{1-2\alpha v_1\}  &  \text{ if } v_1\le \frac{1}{2\alpha}, \\
 v_1 &  \text{ if } v_1\ge \frac{1}{2\alpha}
  \end{array}\right..$$
  Also
  \begin{align} &\lim_{n\to \infty} Ent_n=Ent(\alpha) \;\;\text{ and }\;\;   \lim_{n\to \infty}\frac{1}{n}\sum_{m=1}^n H(\bm p_{m-1})= Ent(\alpha) \;\; a.s.,
\end{align}
where $Ent(\alpha)=I(\alpha v_1)(1-v_1)+I(\alpha(1-v_1))v_1$ and  $I(x)=-x\log x-(1-x)\log (1-x)$.
\end{theorem}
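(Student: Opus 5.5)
The plan is to reduce both limits to an ergodic-frequency statement about the sign of the imbalance $D_m:=N_{m,1}-m\widehat\rho_{m,1}$. Under (\ref{eqHuZhHe}), $p_{m,1}$ takes only three values, $\alpha\widehat\rho_{m,1}$, $\widehat\rho_{m,1}$ or $1-\alpha(1-\widehat\rho_{m,1})$, according as $D_m>0$, $D_m=0$ or $D_m<0$. The no-atom assumption at rational points gives $\pr(D_m=0)=0$ for every $m$, since $m\widehat\rho_{m,1}=N_{m,1}$ would force $\rho_1(\widehat{\bm\Theta}_m)$ to be rational. So almost surely only two regimes occur. By Theorem \ref{thNormality} (Hu, Zhang and He's procedure satisfies Condition \ref{ConC}), $\widehat\rho_{m,1}\to v_1$ and $\bm N_m/m\to\bm v$ a.s. Hence
\[
\max_k p_{m,k}\approx \max(\alpha v_1,1-\alpha v_1)\,I\{D_m>0\}+\big(1-\alpha(1-v_1)\big)I\{D_m<0\},
\]
using that $1-\alpha(1-v_1)\ge 1/2$ always. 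The entropy $H(\bm p_m)$ is approximated in the same way by $I(\alpha v_1)I\{D_m>0\}+I(\alpha(1-v_1))I\{D_m<0\}$, with errors tending to $0$ a.s. by continuity.

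\textbf{Key step: the limiting frequencies.} Let $\pi_n=\frac1n\#\{m\le n: D_m>0\}$. The identity $N_{n,1}=\sum_m X_{m,1}$ together with the martingale SLLN gives
\[
\frac{N_{n,1}}n=\frac1n\sum_{m} p_{m-1,1}+o(1)=\alpha v_1\pi_n+\big(1-\alpha(1-v_1)\big)(1-\pi_n)+o(1)\quad a.s.
\]
Setting this equal to $v_1$ gives
\[
\pi_n\to \frac{1-\alpha(1-v_1)-v_1}{1-\alpha(1-v_1)-\alpha v_1}=\frac{(1-\alpha)(1-v_1)}{1-\alpha}=1-v_1 \quad a.s.
\]
So the fraction of time spent in the ``over-represented'' regime is exactly $1-v_1$, and in the other regime it is $v_1$. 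This identity is the heart of the proof and is clean.

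\textbf{Assembling the limits.} For the selection bias, the case $\alpha v_1\ge 1/2$ (i.e.\ $v_1\ge 1/(2\alpha)$) gives
\[
(1-v_1)\alpha v_1+v_1\big(1-\alpha(1-v_1)\big)=v_1 .
\]
The case $v_1<1/(2\alpha)$ gives
\[
(1-v_1)(1-\alpha v_1)+v_1-\alpha v_1(1-v_1)=v_1+(1-v_1)(1-2\alpha v_1),
\]
which matches $SB(\alpha)$. To obtain the almost sure statement for $\frac1n\sum J_m$, apply the martingale SLLN to $J_m-\pr(J_m=1\mid\mathscr F_{m-1})$, whose increments are bounded. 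The expectation versions $SB_n$ and $Ent_n$ then follow by bounded convergence. The entropy limit is $(1-v_1)I(\alpha v_1)+v_1 I(\alpha(1-v_1))$, which is $Ent(\alpha)$.

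\textbf{Main obstacle.} The hard part is the boundary case $\alpha v_1=1/2$ together with ties in the guessing rule. At the boundary, $\max(\alpha\widehat\rho,1-\alpha\widehat\rho)$ depends on which side of $1/2$ the estimate $\alpha\widehat\rho_{m,1}$ falls. However, both candidate values converge to $1/2$, so continuity of $\max$ resolves this and no separate argument is needed. The no-atom condition is used only to rule out the middle branch $D_m=0$, which otherwise could occur on a set of positive frequency when $v_1$ is rational. Its sufficiency under continuous responses follows because $\widehat{\bm\Theta}_m$ in (\ref{eq2.2}) then has a density on the relevant coordinates, so the level sets of measure zero carry no mass.
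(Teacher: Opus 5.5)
Your proposal is correct and follows essentially the same route as the paper. Both arguments rule out the tie branch $D_m=0$ via the no-atom condition, and use $N_{n,1}/n\to v_1$ in the linear identity to force the frequency of $\{D_m>0\}$ to $1-v_1$. Both then plug this into the limits of $\max_k p_{m,k}$ and $H(\bm p_m)$, using the martingale SLLN and bounded convergence for $SB_n$ and $Ent_n$.

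One small citation fix: the almost sure convergence $\bm N_m/m\to\bm v$, $\widehat{\rho}_{m,1}\to v_1$ comes from the strong consistency result, not from the asymptotic normality in Theorem \ref{thNormality}. Cite Theorem \ref{Th1} (condition (C1) implies its condition (i)), or Theorem 2.1 of \cite{HZH09} as the paper does.
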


The asymptotic selection bias $SB(\alpha)$ of Hu, Zhang and He's procedure is a    non-increasing function of the parameter $\alpha$. As $\alpha$ approaches $0$, it will approach $1$ which is the selection bias of a deterministic allocation procedure, and as $\alpha$ goes to $1$,  it will approach the lower bound. It is larger than the lower bound unless $1/2<\alpha<1$ and $\rho_1(\bm\Theta)\vee \rho_2(\bm\Theta)\ge 1/(2\alpha)$.  The asymptotic entropy $Ent(\alpha)$ of Hu, Zhang and He's procedure is a strictly monotone function of the parameter $\alpha$. As $\alpha$ approaches $0$, the asymptotic entropy will approach $0$ which is the entropy of a deterministic allocation procedure, and as $\alpha$ approaches $1$, the asymptotic entropy will approach the upper bound $H(v_1)$. $Ent(\alpha)$ is more sensitive to $\alpha$ than $SB(\alpha)$. 

\smallskip
{\em \ref{sectSelectionbias}.3. Proofs}. To end this section, we give the proofs of the theorems given in this section.

 {\bf Proof of Theorems \ref{thSB} and \ref{thEntropy}.}  Write $\bm v=\bm\rho(\bm\Theta)$. Note
 $ \pr(J_m=1|\mathscr{F}_{m-1})=\max_k p_{m-1,k} $ and $\pr(J_m=1)\ge \ep[p_{m-1,k}]=\ep [X_{m,k}]$. It follows that
 $$ SB_n\ge \ep\big[\frac{N_{n,k}}{n}\big]\to v_k,\;\; \forall \;k. $$
 (\ref{eqthSB1.1}) is proved.  For (\ref{eqthSB1.2}), note $ \ep[J_m|\mathscr{F}_{m-1}] \ge  \ep[X_{m,k}|\mathscr{F}_{m-1}])$ and
 $$ \lim_{n\to \infty} \frac{1}{n}\sum_{m=1}^n (J_m-\ep[J_m|\mathscr{F}_{m-1}])=0=\lim_{n\to \infty} \frac{1}{n}\sum_{m=1}^n (X_{m,k}-\ep[X_{m,k}|\mathscr{F}_{m-1}])\;\;a.s. $$
 It follows that
$$\liminf_{n\to \infty}\frac{1}{n}  \sum_{m=1}^n J_m\ge   \lim_{n\to \infty} \frac{N_{n,k}}{n}\ge v_k\; a.s.,\;\; \forall \;k. $$

On the other hand,  by \eqref{eq:entropy2} and \eqref{eq:entropy1} we have
$$ Ent_n-H(\bm v)\le \sum_{k=1}^K \log v_k^{-1}\left(\ep\big[\frac{N_{n,k}}{n}\big]-v_k \right)\to 0 \text{ if } \frac{\bm N_{n}}{n} \overset{P}\to \bm v$$
and
\begin{align*}
  \frac{1}{n}\sum_{m=1}^n H(\bm p_{m-1})-H(\bm v)
\le  & \sum_{k=1}^K\log v_k^{-1}\left( \frac{N_{n,k}}{n} -v_k -\frac{\sum_{m=1}(X_{m,k}-\ep[X_{m,k}|\mathscr{F}_{m-1}])}{n}\right) \\
& \to 0 \;\; a.s. \text{ if } \; \frac{\bm N_{n}}{n}  \to \bm v\;\; a.s.,
\end{align*}
which imples \eqref{eqthEntropy1.1} and \eqref{eqthEntropy1.2}, respectively. Further, if $\frac{1}{n}\sum_{m=0}^{n-1}
\|\bm p_{m}-\bm v\|\to 0$ in probability, then
\begin{align*}
 |Ent_n-H(\bm v)|\le & \sup_{\|\bm x-\bm v\|\le \epsilon}|H(\bm x)-h(\bm v)|+\ep\left[\frac{1}{n}\sum_{m=0}^{n-1}I\{\|\bm p_m-\bm v\|\ge \epsilon\}\right]\\
  \le & \sup_{\|\bm x-\bm v\|\le \epsilon}|H(\bm x)-H(\bm v)|+\epsilon^{-1}\ep\left[\frac{1}{n}\sum_{m=0}^{n-1}\|\bm p_m-\bm v\|\right]\\
  & \to 0 \;\;\text{ as } n\to \infty \text{ and then } \epsilon\to 0.
\end{align*}
If $Ent_n\to H(\bm v)$ and $\frac{\bm N_n}{n}\to \bm v$ in probability, then
\begin{align*}&  \ep\left[\frac{1}{n}\sum_{m=0}^{n-1}\sum_{k=1}^K\frac{1}{2}(p_{m,k}-v_k)^2\right]\\
\le & H(\bm v)-Ent_n+\sum_{k=1}^K \log v_k\left(\ep\left[\frac{N_{n,k}}{n}\right]-v_k\right)\to 0,
\end{align*}
which implies
$$\frac{1}{n} \sum_{m=0}^{n-1}\|\bm p_m-\bm v\|\le \left(\frac{1}{n} \sum_{m=0}^{n-1}\|\bm p_m-\bm v\|^2\right)^{1/2}\overset{P}\to 0. $$
The proof of \eqref{eqthEntropy1.3} is completed. Finally, note $|\max_kp_{m,k}-\max_k\rho_k(\bm\Theta)|\le \|\bm p_m-\bm\rho(\bm\Theta)\|$. (d) of Theorem \ref{thEntropy} follows from (c) of Theorem \ref{thEntropy} immediately.  $\Box$

\bigskip
{\bf Proof of Theorem \ref{thSBHuZhHe}}. Write $\bm v=\bm\rho(\bm\Theta)$ as before.  By noticing that $\{J_m-\ep[J_m|\mathscr{F}_{m-1}]\}$ is a sequence of bounded martingale differences,  it is sufficient to show that
\begin{equation}\label{eq:proofHZH2.1} \frac{1}{n}\sum_{m=1}^n\ep[J_m|\mathscr{F}_{m-1}]\to SB\;\; a.s.
\end{equation}
and
\begin{equation}\label{eq:proofHZH2.2} \frac{1}{n}\sum_{m=1}^n H(\bm p_{m-1})\to I(\alpha v_1)v_2+I(\alpha v_2)v_1\;\; a.s.
\end{equation}
Let $K_{n,>}=\sharp\{m\le n: N_{m,1}>m \widehat{\rho}_{m,1}\}$, and define $K_{n,=}$, $K_{n,<}$ similarly.
Then $\pr(N_{m,1}=m\widehat{\rho}_{m,1})=0$ by the fact that $\rho_1(\cdot)$ has no mass at rational points. Hence $I\{N_{m,1}=m\widehat{\rho}_{m,1}\} =0$ a.s. It follows that
$$\frac{K_{n,=}}{n}=\frac{1}{n}\sum_{m=0}^{n-1} I\{N_{m,1}=m\widehat{\rho}_{m,1}\}= 0\;\; a.s. $$
  Notice
 \begin{align*}\ep[J_{m+1}|\mathscr{F}_m]=&\big[(\alpha \widehat{\rho}_{m,1})\vee(1-\alpha \widehat{\rho}_{m,1})\big]I\{N_{m,1}>m \widehat{\rho}_{m,1}\}
 +\big[ \widehat{\rho}_{m,1}\vee \widehat{\rho}_{m,2}\big]I\{N_{m,1}=m \widehat{\rho}_{m,1}\}\\
&+\big[(\alpha \widehat{\rho}_{m,2})\vee (1-\alpha \widehat{\rho}_{m,2})\big]I\{N_{m,1}<m \widehat{\rho}_{m,1}\}\\
=&\big[(\alpha v_1)\vee(1-\alpha v_1)\big]I\{N_{m,1}>m \widehat{\rho}_{m,1}\}
 +\big[ v_1\vee v_2\big]I\{N_{m,1}=m \widehat{\rho}_{m,1}\}\\
&+\big[(\alpha v_2)\vee (1-\alpha v_2)\big]I\{N_{m,1}<m \widehat{\rho}_{m,1}\}+o(1)\;\; a.s.
\end{align*}
It follows that
\begin{align*}\frac{1}{n}\sum_{m=1}^n\ep[J_m|\mathscr{F}_{m-1}]=\big[(\alpha v_1)\vee(1-\alpha v_1)\big]\frac{K_{n,>}}{n}
 +\big[(\alpha v_2)\vee (1-\alpha v_2)\big]\big(1-\frac{K_{n,<}}{n}\big)+o(1)\;\; a.s.
\end{align*}
Similarly,
 \begin{align*}\ep[X_{m+1,1}|\mathscr{F}_m]=& (\alpha v_1) I\{N_{m,1}>m \widehat{\rho}_{m,1}\}
 +  v_1 I\{N_{m,1}=m \widehat{\rho}_{m,1}\}\\
&+ (1-\alpha v_2) I\{N_{m,1}<m \widehat{\rho}_{m,1}\}+o(1)\;\; a.s.
\end{align*}
and
$$ \frac{N_{n,1}}{n}= \frac{1}{n}\sum_{m=1}^n\ep[X_{m,1}|\mathscr{F}_{m-1}]+o(1)
=  \alpha v_1   \frac{K_{n,>}}{n}+ (1-\alpha v_2)\big(1-\frac{K_{n,>}}{n}\big) +o(1)\;\; a.s.
 $$
On the other hand, $N_{n,1}/n\to v_1$ a.s. by Theorem 2.1 of \cite{HZH09}.
Hence $K_{n,>}/n \to v_2$ a.s. It follows that
\begin{align*}
&\lim_{n\to \infty}\frac{1}{n}\sum_{m=1}^n J_m = \lim_{n\to \infty}\frac{1}{n}\sum_{m=1}^n\ep[J_m|\mathscr{F}_{m-1}]\\
=&\big[(\alpha v_1)\vee(1-\alpha v_1)\big]v_2
 +\big[(\alpha v_2)\vee (1-\alpha v_2)\big]v_1=SB\;\; a.s.
\end{align*}
The proof of \eqref{eq:proofHZH2.1} is completed.

Similarly,
\begin{align*}
\frac{1}{n} \sum_{m=1}^n &  H(\bm p_{m-1})=  \frac{1}{n}\sum_{m=0}^{n-1} \left[ I(\alpha\widehat{\rho}_{m,1})I\{N_{m,1}/m>\widehat{\rho}_{m,1}\}
+ I(\widehat{\rho}_{m,1})I\{N_{m,1}/m=\widehat{\rho}_{m,1}\}\right.\\
& \qquad \qquad\qquad \left. + I(\alpha(1-\widehat{\rho}_{m,1}))I\{N_{m,1}/m<\widehat{\rho}_{m,1}\}\right] \\
=& I(\alpha v_1)\frac{K_{n,>}}{n}+I(v_1)\frac{K_{n,=}}{n}+I(\alpha v_2)\frac{K_{n,<}}{n}+o(1)\; a.s.\\
=&  I(\alpha v_1)v_2+0+I(v_1)\cdot 0+I(\alpha v_2)v_1+o(1)= I(\alpha v_1)v_2 +I(\alpha v_2)v_1+o(1)\; a.s.
\end{align*}
The proof is completed. $\Box$

\section{Doubly asymptotically best randomized-adaptive design}\label{sectbest}
  If $p_{m,k}\to \rho_k(\bm\Theta)$,  then $\pr(J_m=1)\to \max_k\rho_k(\bm\Theta)$ and $Ent_n\to H\left(\bm\rho(\bm\Theta)\right)$, which implies that the asymptotic selection bias  $SB=\max_k\rho_k(\bm\Theta)$  attains the lower bound and the asymptotic entropy attains the upper bound.  Hence, if the allocation function  $\bm g\big(m,\frac{\bm
N_m}{m},\widehat{\bm \rho}_m\big)\equiv\bm g\big(\frac{\bm
N_m}{m},\widehat{\bm \rho}_m\big)$ in (\ref{allocation}) is a {\em  continuous} function of the current sample allocation proportion $\frac{\bm
N_m}{m}$ and the current estimator $\widehat{\bm \rho}_m$ of the target allocation proportion, then the asymptotic selection bias and asymptotic entropy are optimal.

 The DBCD of \cite{HZ04}  attains the optimal selection bias and optimal entropy because its allocation function is continuous. However, it is not efficient. Most generalized Friedman's urn models such as the randomized play-the-winner rule of \cite{WD78}  and its generalization  \cite{Wei79} also attain their optimal selection bias and optimal entropy because the urn proportions (i.e., the allocation probabilities) will converge to the same limit of the sample allocation proportions.  But they are far away from efficient designs as discussed in \cite{HRZ06}  and \cite{HZH09}.

A response-adaptive design is called a {\em doubly asymptotically best randomized-adaptive design} (DaBrade)  if it both is  asymptotically efficient in sense that the asymptotic variance of $\bm N_n/\sqrt{n}$ attains the lower bound \eqref{eq:Lowerbound} and is asymptotically most random in the sense that the asymptotic
entropy attains the upper bound $H(\bm \rho(\bm\Theta))$.
   For defining a design that is efficient and also has the optimal entropy, as discussed, we shall choose an allocation probability function that is continuous and satisfies Condition \ref{ConC}. So, the efficient DBCDs (Procedures \ref{AP:B} and \ref{AP:C})   are such  designs.

\begin{theorem}\label{thEprade}    Under Conditions \ref{ConA} and \ref{ConB},  EDBCD (Procedure \ref{AP:B}) and EDBCD2 (Procedure \ref{AP:C}) 
are efficient adaptive designs in the sense that (\ref{eqNormality}) hold, and also most random in the sense that the asymptotic entropy $\lim_nEnt_n$ and the asymptotic selection bias $\lim_n SB_n$ attain the optimal values $H\left(\bm \rho(\bm\Theta)\right)$ and $\max_k\rho_k(\bm \Theta)$, respectively. Further
\begin{equation}\label{eqEprade1.3}  \pr( X_{m,k}|\mathscr{F}_{m-1})\to \rho_k(\bm\Theta)\;\; a.s.,
\end{equation}
\begin{equation}\label{eqEprade1.4} \lim_{n\to \infty}\frac{1}{n}\sum_{m=1}^n J_m=SB=\max_k\rho_k(\bm\Theta) \;\; a.s.,
\end{equation}
\begin{equation}\label{eqEprade1.5} \lim_{n\to \infty}\frac{1}{n}\sum_{m=1}^n H\left(\bm p_{m-1}\right)=H\left(\bm \rho(\bm\Theta)\right) \;\; a.s.
\end{equation}
\end{theorem}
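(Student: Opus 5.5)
Efficiency is already settled: Corollary \ref{Cor3.1} gives (\ref{eqNormality}) for both EDBCD and EDBCD2 under Conditions \ref{ConA} and \ref{ConB}. So the remaining work is the randomness statements. The plan is to prove (\ref{eqEprade1.3}) first and derive everything else from it. The needed inputs are the strong consistency $\bm N_m/m\to\bm v$ and $\widehat{\bm\rho}_m\to\bm v$ a.s. For $\widehat{\bm\rho}_m$ this follows from $\widehat{\bm\Theta}_m\to\bm\Theta$ a.s. (the strong consistency part of the general asymptotics in Section \ref{sectAsmptotics}, valid under Conditions \ref{ConA}--\ref{ConC}), together with the continuity of $\bm\rho$.

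Given these limits, I would show that each allocation function $\bm g(\bm x,\bm\rho)$ satisfies $\bm g(\bm x,\bm\rho)\to\bm v$ as $(\bm x,\bm\rho)\to(\bm v,\bm v)$ with $\bm v\in(0,1)^K$.
\begin{itemize}
\item \emph{EDBCD.} The weight $\psi$ is continuous on $[0,\infty)$, with $\psi(1)=1$ from both branches. Since $\rho_k/x_k\to 1$, each $\psi(\rho_k/x_k)\to1$. Hence the denominator tends to $1$ and $g_k\to v_k$.
\item \emph{EDBCD2.} The correction term is bounded by
$$\Big|\min_j\frac{\rho_j\wedge(1-\rho_j)}{|x_j-\rho_j|^{\alpha}}(x_k-\rho_k)\Big|\le \frac{\rho_k\wedge(1-\rho_k)}{|x_k-\rho_k|^{\alpha}}|x_k-\rho_k|\le |x_k-\rho_k|^{1-\alpha},$$
which tends to $0$ because $\alpha<1$ (and the term is $0$ by convention when $x_k=\rho_k$). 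Thus $g_k\to v_k$.
\end{itemize}
Hence $p_{m,k}=g_k(\bm N_m/m,\widehat{\bm\rho}_m)\to v_k$ a.s., which is (\ref{eqEprade1.3}).

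Next, (\ref{eqEprade1.4}) and (\ref{eqEprade1.5}) follow from this. First, $\max_k p_{m-1,k}\to\max_k v_k$ a.s., and $H(\bm p_{m-1})\to H(\bm v)$ a.s. because $H$ is continuous. Cesàro averages then give
$$\frac1n\sum_{m=1}^n \ep[J_m|\mathscr F_{m-1}]\to\max_k v_k,\qquad \frac1n\sum_{m=1}^n H(\bm p_{m-1})\to H(\bm v)\quad\text{a.s.}$$
Second, $J_m-\ep[J_m|\mathscr F_{m-1}]$ is a sequence of bounded martingale differences, so its averages vanish a.s. by the martingale strong law. This yields (\ref{eqEprade1.4}).

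Finally, the limits $\lim_n SB_n=\max_k v_k$ and $\lim_n Ent_n=H(\bm v)$ follow by bounded convergence, since $0\le \max_k p_{m,k}\le1$ and $0\le H\le\log K$. Alternatively, they follow from Theorem \ref{thEntropy}(c),(d), using $\frac1n\sum_m\|\bm p_m-\bm v\|\to0$ a.s.

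The main obstacle is the almost sure consistency of $\bm N_m/m$. It is not simply a consequence of the distributional limit (\ref{eqNormality}). It relies on the strong consistency result proved in Section \ref{sectAsmptotics} for designs satisfying Condition \ref{ConC}, which I would invoke for both procedures; their verification of Condition \ref{ConC} was given in Section \ref{sectERADE}. If only convergence in probability were available, parts (\ref{eqEprade1.3})--(\ref{eqEprade1.5}) would weaken to in-probability statements. The limits of $SB_n$ and $Ent_n$ would still hold by bounded convergence.
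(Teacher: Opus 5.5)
Your proposal is correct and follows essentially the paper's route (the paper proves it through Theorem \ref{thEprade2}): efficiency from Condition \ref{ConC}, almost sure consistency from Theorem \ref{Th1}, and continuity of the allocation function at $(\bm v,\bm v)$, giving $p_{m,k}\to v_k$ a.s., followed by the martingale strong law and bounded convergence. The only differences are minor: the paper enters Theorem \ref{Th1} through condition (ii), the monotonicity of $g_i/y_i$, rather than (i) via (C1), and your bound $|g_k-\rho_k|\le|x_k-\rho_k|^{1-\alpha}$ for EDBCD2 makes explicit a continuity step that the paper only asserts.
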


 An adaptive design with (\ref{eqEprade1.3}) has both nice interpretative properties in practice and in theory.    In practice, each patient asymptotically has the same chance to receive a desirable treatment. In theory,  the sequence $\{\bm X_m\}$ of the results of assignments is asymptotically equivalent to a sequence $\{\widetilde{\bm X}_m\}$ of i.i.d. random vectors with $\pr(\widetilde{ X}_{m,k}=1)=\rho_k(\bm\Theta)$, which is a purely random sequence. No adaptive procedure can be better in terms of randomization than the purely random assignment even when the value of the parameter $\bm\Theta$ is known.

The weight function in \eqref{eqEprade1.1} can be replaced by a more general function.
\begin{theorem}\label{thEprade2}    Let $\psi(x)\ge 0, x\ge 0$ be a non-decreasing function with the left derivative $\dot{\psi}_{-}(1)<\infty$ and the right derivative   $\dot{\psi}_{+}(1)=+\infty$ (resp. $\dot{\psi}_{-}(1)=\infty$ and    $\dot{\psi}_{+}(1)<\infty$), and $\psi(1)=1$. Suppose that $\psi(x)$ is continuous at $x=1$.
Then under Conditions \ref{ConA} and \ref{ConB}, an adaptive randomization procedure \ref{AP:A} with allocation function $\bm g=(g_1,\ldots,g_K)$ being defined as
\begin{equation}\label{eq:Eprade2.1}  g_{k}\big(m,\bm x ,\bm \rho\big)=g_{k}\big(\bm x ,\bm \rho\big)=\frac{\rho_k\psi\left(\frac{\rho_k}{x_k}\right)}{\sum_{j=1}^K\rho_j\psi\left(\frac{\rho_j}{x_j}\right)}, \; k=1,\ldots,K
\end{equation}
  is   efficient  in sense that \eqref{eqNormality} hold, and most random in sense that the asymptotic entropy $\lim_n Ent_n$  and the asymptotic selection bias $\lim_n SB_n$ attain the optimal values $H\big(\bm  \rho(\bm\Theta)\big)$ and $\max_k\rho_k(\bm \Theta)$, respectively. Further, \eqref{eqEprade1.3}-\eqref{eqEprade1.5} hold.
\end{theorem}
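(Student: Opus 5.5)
We prove Theorem \ref{thEprade2} in the case $\dot{\psi}_{-}(1)<\infty$, $\dot{\psi}_{+}(1)=+\infty$; the other case is symmetric and uses Condition C$^{\prime}$ in place of Condition \ref{ConC}. The argument has three steps: verify Condition \ref{ConC}, invoke Theorem \ref{thNormality} (and the strong consistency of Section \ref{sectAsmptotics}), and then use continuity of $\bm g$ at $(\bm v,\bm v)$ together with Theorems \ref{thSB} and \ref{thEntropy}.

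\emph{Step 1: Condition \ref{ConC}.} Write $\varpi_j=\psi(\rho_j/x_j)$. Monotonicity and $\psi(1)=1$ give $\varpi_j\ge 1$ when $x_j\le\rho_j$ and $\varpi_j\le 1$ when $x_j\ge \rho_j$. We cannot use the bound $\sum_j\rho_j\varpi_j\ge 1$, since $\psi(x)\ge x^\gamma$ is no longer assumed. Instead we bound $g_k-\rho_k$ directly. For $x_k>\rho_k$,
$$g_k-\rho_k=\frac{\rho_k(\varpi_k-1)-\rho_k\sum_j\rho_j(\varpi_j-1)}{\sum_j\rho_j\varpi_j}\le -\frac{\rho_k}{\sum_j\rho_j\varpi_j}\Big[\sum_{j:x_j<\rho_j}\rho_j(\varpi_j-1)-\sum_{j:x_j>\rho_j}\rho_j(1-\varpi_j)\Big],$$
where the term $\rho_k(\varpi_k-1)\le 0$ has been dropped. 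Define $\lambda_{m,k}:=(g_k-\rho_k)/(x_k-\rho_k)$ on $\{x_k>\rho_k\}$, and set $\gamma_{m,k}=0$, $L_0=0$. We must show two things.
\begin{itemize}
\item[(C1)] $\lambda_{m,k}\le 0$ always. We need $\sum_j\rho_j\varpi_j\ge 1$ when $x_k>\rho_k$. I expect this to be the main obstacle, since without convexity it can fail. If it fails, I would instead take $\lambda_{m,k}$ to be the displayed ratio and check only $\limsup\lambda_{m,k}\le\lambda_0<1$. Near $(\bm v,\bm v)$ this follows from Step 1(C2) below. Globally, $g_k\le\rho_k\varpi_k/\sum_j\rho_j\varpi_j$ with $\varpi_k\le1$, and the resulting ratio bound is what would be needed. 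The limsup is only required on the event where the process actually evolves, and strong consistency (Theorem \ref{thNormality} / Section \ref{sectAsmptotics}) keeps $(\bm N_m/m,\widehat{\bm\rho}_m)$ eventually near $(\bm v,\bm v)$. So a local version of (C1) should suffice, provided the general theorem of Section \ref{sectAsmptotics} tolerates it; I would check that.
\item[(C2)] $\lambda_{m,k}\to-\infty$ as $\bm x,\bm\rho\to\bm v$. For $x_j<\rho_j$, $\rho_j(\varpi_j-1)\ge \rho_j(\rho_j-x_j)\,\frac{\psi(\rho_j/x_j)-1}{\rho_j/x_j-1}\cdot\frac1{x_j}$, and the middle factor tends to $\dot\psi_+(1)=\infty$. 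For $x_j>\rho_j$, $\rho_j(1-\varpi_j)\le (C+o(1))(x_j-\rho_j)$ with $C$ close to $\dot\psi_-(1)<\infty$. Also $\sum_j\rho_j\varpi_j\to1$ by continuity of $\psi$ at $1$. Using $\sum_j(x_j-\rho_j)^+=\sum_j(\rho_j-x_j)^+$, exactly as in the EDBCD verification, we get $g_k-\rho_k\le-\lambda(x_k-\rho_k)$ for any $\lambda$ once $(\bm x,\bm\rho)$ is close to $(\bm v,\bm v)$.
\end{itemize}

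\emph{Step 2: efficiency.} Condition \ref{ConC} holds, so Theorem \ref{thNormality} gives \eqref{eqNormality}. In particular $\bm N_m/m\to\bm v$ a.s. (strong consistency from Section \ref{sectAsmptotics}), and $\widehat{\bm\rho}_m\to\bm v$ a.s. by Condition \ref{ConA} and \eqref{eq2.1}.

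\emph{Step 3: randomness.} $\psi$ is continuous at $1$, so $\bm g$ is continuous at $(\bm v,\bm v)$ with $\bm g(\bm v,\bm v)=\bm v$. Hence $p_{m,k}=g_k(\bm N_m/m,\widehat{\bm\rho}_m)\to v_k$ a.s., which is \eqref{eqEprade1.3}. Two consequences follow.
\begin{itemize}
\item Since $\max_kp_{m-1,k}\to\max_kv_k$ a.s., bounded convergence gives $SB_n\to\max_k v_k$. Martingale-difference averaging of $J_m-\ep[J_m|\mathscr{F}_{m-1}]$ then gives \eqref{eqEprade1.4}.
\item Since $H(\bm p_{m-1})\to H(\bm v)$ a.s. and $H$ is bounded, Cesàro averaging gives \eqref{eqEprade1.5} and $Ent_n\to H(\bm v)$. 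Alternatively this follows from Theorem \ref{thEntropy}(c),(d).
\end{itemize}
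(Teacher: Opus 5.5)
\textbf{There is a genuine gap, and it is the point you flagged as uncertain: where strong consistency comes from.}

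Your (C2) computation controls $(g_k-\rho_k)/(x_k-\rho_k)$ only near $(\bm v,\bm v)$. To turn that into (C1) along the sample path, you need $\bm N_m/m\to\bm v$ and $\widehat{\bm\rho}_m\to\bm v$ a.s. You take this from Theorem~\ref{thNormality} or Section~\ref{sectAsmptotics}. But Theorem~\ref{thNormality} presupposes Condition~\ref{ConC}, and your only link to Theorem~\ref{Th1} is the remark that (C1) implies its condition (i). The argument is therefore circular. The same applies to your claim that $\widehat{\bm\rho}_m\to\bm v$ follows from Condition~\ref{ConA} and \eqref{eq2.1}: that also needs $N_{m,k}\to\infty$.

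Nor can condition (i) be checked directly for a merely monotone $\psi$. Take $\psi(x)=\max(0,3x-2)$ on $[0,1]$ and $\psi(x)=1+\min(\sqrt{x-1},\,0.01)$ for $x>1$; this satisfies every hypothesis of the theorem. With $K=3$, $\bm\rho=(1/3,1/3,1/3)$ and $\bm x=(0.34,\,0.66-\delta,\,\delta)$ for small $\delta$, one gets $\sum_j\rho_j\varpi_j\approx 0.65$ and $g_1\approx 0.48>\rho_1$, although $x_1>\rho_1+0.005$. So neither $\sum_j\rho_j\varpi_j\ge 1$ nor a global bound of type (i) is available. Your fallback ratio bound cannot rescue this.

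\textbf{The missing idea, which is how the paper argues, is to get consistency from the ordering structure of $\bm g$ rather than from (C1).} Since $g_i/\rho_i=\psi(\rho_i/x_i)/\sum_j\rho_j\psi(\rho_j/x_j)$ and $\psi$ is non-decreasing, $x_i/\rho_i\ge x_j/\rho_j$ forces $g_i/\rho_i\le g_j/\rho_j$. Hence condition (ii) of Theorem~\ref{Th1} holds with $\lambda=0$. This yields $\bm N_n/n\to\bm v$ and $\widehat{\bm\rho}_n\to\bm v$ a.s.\ independently of Condition~\ref{ConC}, with no convexity needed.

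With consistency in hand, your local estimate shows that $\lambda_{m,k}:=(p_{m,k}-\widehat{\rho}_{m,k})/(N_{m,k}/m-\widehat{\rho}_{m,k})\to-\infty$ a.s.\ along the trajectory. That gives both (C1) and (C2); since (C1) is only an a.s.\ limsup statement, the local version is indeed enough. After that, your Steps 2 and 3 coincide with the paper's proof.
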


 Besides (\ref{eqEprade1.1}), the wight function $\psi(x)$ can be chosen as
$$  \psi(x)=\begin{cases}x^{\gamma}, &x\ge 1,\\
1-\sqrt{1-x^{2\gamma}}, & 0\le x\le 1
\end{cases}
\; \text{ or }\;   \psi(x)=\begin{cases} 1+\sqrt{x^{2\gamma}-1}, &  x\ge 1,\\
1, &0\le x\le 1
\end{cases} \;\; \text{ etc.}
$$

{\bf Proof of Theorem \ref{thEprade2}. } Suppose $\dot{\psi}_{-}(1)<\infty$ and $\dot{\psi}_{+}(1)=+\infty$.
For (\ref{eqNormality}), it sufficient to verify the Condition \ref{ConC}.
Note that $\psi(x)$ is non-decreasing. So, $g_i(m,\bm x,\bm y)/y_i\le g_j(m,\bm x,\bm y)/y_j$ whenever $x_i/y_i\ge x_j/y_j$.
By Theorem \ref{Th1},
\begin{equation}\label{eqproofthEprade2.1}\lim_{n\to \infty}\frac{\bm N_n}{n}=\lim_{n\to \infty}\widehat{\bm\rho}_n=\bm\rho(\bm\Theta)=\bm v\;\; a.s.
\end{equation}
  With the same arguments as proving Corollary \ref{Cor3.1}, it follows that, for any given $\lambda>0$,
\begin{align*}
g_k- \rho_k \le  -\lambda (x_k- \rho_k), \;\; x_k> \rho_k,
\end{align*}
whenever $\|\bm x-\bm v\|+\|\bm \rho-\bm v\|$ is small enough. By (\ref{eqproofthEprade2.1}),  Conditions (C1) and (C2) are verified.  For (\ref{eqEprade1.3}) and (\ref{eqEprade1.4}), note that $\psi(x)$ is continuous at $x=1$. So, $\psi\big(\widehat{\rho}_{m-1,k}/(N_{m-1,k}/(m-1))\big)\to 1$ a.s. $k=1,\ldots,K$. It follows that
$$\pr(X_{m,k}=1|\mathscr{F}_{m-1})=p_{m-1,k}=g_k\left(\frac{\bm N_{m-1}}{m-1},\widehat{\bm \rho}_{m-1}\right)\to \rho_k(\bm\Theta)\;\; a.s. $$
and
\begin{align*}
 \frac{1}{n}\sum_{m=1}^n J_m= & \frac{1}{n}\sum_{m=1}^n\big(J_m-\ep[J_m|\mathscr{F}_{m-1}]\big)
+\lim_{n\to \infty}\frac{1}{n}\sum_{m=1}^n \max_k p_{m-1,k}\to
 \max_k\rho_k(\bm\Theta)\;\; a.s.,\\
&  \frac{1}{n}\sum_{m=0}^{n-1} H(\bm p_m)\to H(\bm \rho(\bm \Theta)) \;\; a.s.
\end{align*}
Thus, (\ref{eqEprade1.3})-(\ref{eqEprade1.5}) hold and $\lim_n SB_n=\max_k\rho_k(\bm\Theta)$, $\lim_n Ent_n=  H(\bm \rho(\bm \Theta))$. $\Box$.

\section{General asymptotic properties.}\label{sectAsmptotics}
\setcounter{equation}{0}
Besides the asymptotic normality in Theorem \ref{thNormality}, we will derive asymptotic properties including the strong consistency, the convergence rate of the consistency, the functional central limit theorems, and the Gaussian approximation. The functional central limit theorem and the Gaussian approximation are fruitful tools for deriving the asymptotic normality of randomly stopped processes (c.f., \cite{B68},  Page 146, and \cite{CR81}, Pages 258-259) and for deriving the asymptotic distribution of a  function of the processes. If sequential monitoring of an adaptive design is considered, the functional central limit theorem is also needed for deriving the distribution of the sequential test (c.f., \cite{ZhuHu10}).

\begin{theorem} \label{Th1}
{\rm (Strong Consistency)} 
 Suppose that  in the Bahadur-type representation \eqref{eq2.1}, $\ep \|\bm  \xi_{1,k}\|<\infty$ and $\bm\theta_k= \ep\bm\xi_{1,k}$,
 $k=1,\ldots,K$, and that the target proportion function
$\bm y\to \bm  \rho(\bm y): \Cal R^{d\times K}\to
(0,1)^K$  is a continuous function. Let $0\le \lambda<1$, $\Pi=\{(\pi_1,\ldots, \pi_K)\in (0,1)^K: \sum_i\pi_i=1\}$.   Assume that the allocation function $\bm g(n,\bm x, \bm y)$, $\bm x,\bm y\in \Pi$ satisfies   one of the following conditions:
\begin{description}
  \item[\rm (i)] For any $\epsilon>0$, there exists $n_0$ such that $g_k(n,\bm x,\bm y)-y_k\le  \lambda(x_k-y_k)+\epsilon$ whenever $x_k>y_k+\epsilon$, and $n\ge n_0$;
  \item[\rm (ii)] For any $\epsilon>0$, there exists $n_0$ such that
  \begin{equation}\label{eq:Th1.1} \frac{g_i(m,\bm x,\bm y)}{y_i}-\frac{ g_j(m,\bm x,\bm y)}{y_j}\le \lambda \left(\frac{x_i}{y_i}-\frac{x_j}{y_j}\right)+\epsilon
  \end{equation} 
  whenever $x_i/y_i> x_j/y_j+\epsilon$, and $n\ge n_0$;
  \item[\rm (iii)] For any $\epsilon>0$, there exists $n_0$ such that \eqref{eq:Th1.1} holds whenever $x_i-y_i> x_j-y_j+\epsilon$, and $n\ge n_0$;
  \item[\rm (iv)] There exists positive and continuous functions $\alpha_k(y)$ ($0<y<1$), $k=1,\ldots, K$, such that  any $\epsilon>0$, there exists $n_0$ for which \eqref{eq:Th1.1} holds whenever $\frac{x_i-y_i}{\alpha_i(y_i)}> \frac{x_j-y_j}{\alpha_j(y_j)}+\epsilon$  and $n\ge n_0$.
\end{description}
Then $
\bm N_n/n\to \bm v$ a.s., $\widehat{\bm\Theta}_n\to \bm\Theta$ a.s. and $\widehat{\bm \rho}_n\to \bm v$ a.s.
\end{theorem}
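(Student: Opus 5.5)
The plan has two parts: first reduce everything to the convergence $\widehat{\bm\rho}_n-\bm N_n/n\to\bm 0$, then prove that convergence by a drift argument.

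\emph{Step 1: martingale decomposition.} Write
\[
\bm N_n=\sum_{m=1}^n \bm p_{m-1}+\bm M_n,\qquad \bm M_n=\sum_{m=1}^n(\bm X_m-\ep[\bm X_m|\mathscr{F}_{m-1}]).
\]
The increments of $\bm M_n$ are bounded martingale differences, so $\bm M_n=o(n)$ a.s. by the martingale strong law.

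\emph{Step 2: reduce to consistency of the allocation proportions.} I would first show that $\widehat{\bm\Theta}_n\to\bm\Theta$ a.s. follows once $\liminf_n N_{n,k}/n>0$ a.s. for each $k$. Indeed, by \eqref{eq2.1} and the strong law for the i.i.d.\ $\bm\xi_{j,k}$ (only $\ep\|\bm\xi_{1,k}\|<\infty$ is needed), the average $N_{n,k}^{-1}\sum_j X_{j,k}\bm\xi_{j,k}$ is a subsequence-type average of an i.i.d.\ sequence. It therefore converges to $\bm\theta_k$ whenever $N_{n,k}\to\infty$. If $N_{n,k}$ stays bounded, the estimate stays near the fixed initial value, so I first need $N_{n,k}\to\infty$ a.s.

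Continuity of $\bm\rho$ then gives $\widehat{\bm\rho}_n\to\bm v$. With that in hand, the whole theorem reduces to proving
\[
\bm N_n/n-\widehat{\bm\rho}_n\to\bm 0\quad \text{a.s.}
\]

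\emph{Step 3: drift argument for condition (i).} Fix $\epsilon>0$ and $k$. On the set of times with $N_{m,k}/m>\widehat\rho_{m,k}+\epsilon$, condition (i) gives
\[
p_{m,k}-N_{m,k}/m\le -(1-\lambda)(N_{m,k}/m-\widehat\rho_{m,k})+\epsilon<-(1-\lambda)\epsilon+\epsilon.
\]
To make this strictly negative I apply (i) with a smaller $\epsilon'$ in place of $\epsilon$, so that the drift is at most $-c<0$. Next I use the recursion
\[
\frac{N_{m+1,k}}{m+1}-\frac{N_{m,k}}{m}=\frac{X_{m+1,k}-N_{m,k}/m}{m+1}.
\]
Combining this with $M_n=o(n)$ and with the fact that $\widehat\rho_{m,k}$ moves by $o(1)$ over blocks $[m,(1+\delta)m]$ gives a standard stochastic-approximation conclusion. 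Namely, the excursions of $G_{m,k}=N_{m,k}/m-\widehat\rho_{m,k}$ above level $2\epsilon$ cannot persist, so $\limsup_m G_{m,k}\le 2\epsilon$.

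The upper bounds for every $k$ together with $\sum_k G_{m,k}=0$ give $\liminf_m G_{m,k}\ge -2(K-1)\epsilon$. Hence $N_{n,k}\to\infty$ for every $k$, which closes the circularity with Step 2. This is done by a bootstrapping step: first apply the argument with $\widehat\rho$ bounded away from $0$ and $1$. That bound holds because $\widehat{\bm\Theta}_m$ either converges or stays near the initial value, so $\widehat{\bm\rho}_m$ stays in a compact subset of $(0,1)^K$ by continuity.

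The main technical obstacle is making this excursion argument rigorous while $\widehat{\bm\rho}_m$ is itself random and moving. I would handle it by working on the event where $\widehat{\bm\rho}_m$ has bounded oscillation on geometric blocks, and by comparing the path with the ODE $\dot x=-(x-y)$.

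\emph{Step 4: conditions (ii)--(iv).} These are treated by the same drift argument applied to the ratios $x_i/y_i$. Take $i$ to be the index maximizing $N_{m,i}/(m\widehat\rho_{m,i})$ and $j$ the index minimizing it (respectively, with $(x_i-y_i)$ or $(x_i-y_i)/\alpha_i(y_i)$ in place of the ratio). Condition \eqref{eq:Th1.1} shows that the spread
\[
D_m=\max_i \frac{N_{m,i}}{m\widehat\rho_{m,i}}-\min_j \frac{N_{m,j}}{m\widehat\rho_{m,j}}
\]
has a negative drift whenever it exceeds $\epsilon$. This yields $\limsup_m D_m\le C\epsilon$. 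Since $\sum_k N_{m,k}/m=\sum_k\widehat\rho_{m,k}=1$, every ratio lies between the min and the max and the ratios average to $1$, so all ratios tend to $1$.

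For (iv), continuity and positivity of $\alpha_k$ on the compact range of $\widehat\rho$ ensure that the normalization is harmless. The most delicate point is the max/min switching of indices in $D_m$. I would handle it by bounding the upper Dini-type increment of $\max_i$ by the increment of the maximizing index, using the usual argument for the drift of a maximum.
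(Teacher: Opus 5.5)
Your reduction and your treatment of condition (i) are sound and follow the paper's structure. $\widehat{\bm\rho}_n$ converges a.s. to a random interior point $\bm u$ in every case, since components with $N_{n,k}$ bounded are eventually constant. One then shows $\bm N_n/n-\widehat{\bm\rho}_n\to\bm 0$, which gives $N_{n,k}\to\infty$ and hence $\bm u=\bm v$. Your per-coordinate excursion argument is a hands-on substitute for Lemma~\ref{lemODE} with $V=\sum_i(x_i-y_i)^2I\{x_i>y_i\}$.

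\textbf{The gap is in Step 4 for (iii) and (iv).} There you take $D_m$ to be the spread of $l_i=(x_i-y_i)/\alpha_i(y_i)$ (with $\alpha_i\equiv 1$ for (iii)). The claim that \eqref{eq:Th1.1} then forces a negative drift whenever $D_m>\epsilon$ is false. Write $g_i/y_i-1=\lambda(x_i/y_i-1)+\eta_i$. The drift of $l_i-l_j$ is $(\lambda-1)(l_i-l_j)+\frac{y_i}{\alpha_i(y_i)}\eta_i-\frac{y_j}{\alpha_j(y_j)}\eta_j$, whereas \eqref{eq:Th1.1} only gives $\eta_i-\eta_j\le\epsilon$. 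Unless $y_i/\alpha_i(y_i)$ is the same for all $i$ (which is exactly case (ii)), you would need $\eta_i\le O(\epsilon)$ and $\eta_j\ge -O(\epsilon)$ separately. That requires the hypothesis against every other index, and it is unavailable when another index lies within $\epsilon$ of the extremal one.

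A concrete counterexample for (iii): take $\lambda=0$, $K=3$, $\bm y=(0.5,0.3,0.2)$, $\bm x=\bm y+(a,a,-2a)$ with $0<\epsilon<3a<0.12$, and $\bm g=(0.7,0.02,0.28)$.
\begin{itemize}
\item The only pairs constrained by (iii) are $(1,3)$ and $(2,3)$.
\item For these, $g_1/y_1=g_3/y_3=1.4>g_2/y_2$, so the hypothesis holds.
\item Yet the drift of $(x_1-y_1)-(x_3-y_3)$ is $(g_1-x_1)-(g_3-x_3)=0.12-3a>0$, although the spread $3a$ exceeds $\epsilon$.
\end{itemize}
So your remark that ``the normalization is harmless'' is precisely the point that fails.

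\textbf{The fix} is to run the spread argument on the ratios $r_i=x_i/y_i$ in all of (ii)--(iv).
\begin{itemize}
\item Since $\sum_i y_ir_i=1$, the ratio maximizer $i^*$ has $x_{i^*}\ge y_{i^*}$ and the ratio minimizer $j^*$ has $x_{j^*}\le y_{j^*}$.
\item Hence $l_{i^*}-l_{j^*}\ge c\,(r_{i^*}-r_{j^*})$ with $c=\min_k y_k/\alpha_k(y_k)$, which is bounded below on the compact range of $\widehat{\bm\rho}$.
\item So once $D_m>\epsilon/c$, \eqref{eq:Th1.1} applies to exactly the pair driving $D_m$, and the drift is at most $\bigl((\lambda-1)D_m+\epsilon\bigr)/(m+1)$. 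In the example above the ratio spread does decrease.
\end{itemize}

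You should also remove the noise before taking max/min. Otherwise the index attaining the maximum at time $m+1$ depends on $\bm X_{m+1}$, and the attached noise is not a martingale difference. Replacing $\bm x_m$ by $\bm x_m+\sum_{l\ge m}\Delta\bm M_{l+1}/(l+1)$ works, because $\sum_m\Delta\bm M_m/m$ converges a.s.; this is stronger than the $\bm M_n=o(n)$ you cite.

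For comparison, the paper avoids max/min bookkeeping entirely. It uses $V=\sum_i(x_i-y_i)^2/\alpha_i(y_i)$ together with the identity $\sum_i(g_i-y_i)l_i=\frac12\sum_{i,j}y_iy_j\,(g_i/y_i-g_j/y_j)(l_i-l_j)$. This spreads the pairwise hypothesis over all pairs at once, so near-ties cost only $O(\epsilon)$, and it then applies Lemma~\ref{lemODE}.
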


If Condition (C1) is satisfied, then (i) is satisfied.

The next theorem is on the rate of the strong consistency.

\begin{theorem} \label{Th2} Suppose that Conditions \ref{ConA}, \ref{ConB} and
(C1) with $\lambda_0<1/2$ are satisfied. Then
\begin{equation}\label{eqTh2.1}\widehat{\bm \rho}_n- \bm v=O\Big(\sqrt{\frac{\log\log
n}n}\Big) \quad  \text{ and  }\quad \bm N_n-n\bm
v=O\big(\sqrt{n\log\log n}\big) \; a.s.,
\end{equation}
\begin{equation}\label{eqTh2.2} \max_{m\le n}\|\bm N_m-m\bm
v\|=O\big(\sqrt{n}\big) \; \text{in probability}.
\end{equation}
Further, there exists an $(K\times d)$-dimensional standard Brownian motion $\bm
B(t)=(\bm B_1(t),\ldots, \bm B_K(t))$ such that
\begin{equation}\label{eqTh2.3ad}
n\big(\widehat{\bm\Theta}_n-\bm\Theta)=\bm B_t\bm V^{1/2}+o(n^{1/2})\;\; a.s.
\end{equation}
and
\begin{equation}
\label{eqTh2.3}  \widehat{\bm \rho}_n-\bm v =\frac{\bm W(n)}{n} +o\Big(\frac{\|\bm W(n)\|}{n}\Big)
  +o(n^{-1/2}) \;\; a.s.,
\end{equation}
where $$\bm W(t)=\bm B(t)\bm V^{1/2}\frac{\partial
\bm \rho}{\partial\bm y}\big|_{\bm \Theta}$$
 is a $K$-dimensional Browian motion with the variance-covariance matrix $\bm\Sigma$ given in (\ref{eqLB}).
\end{theorem}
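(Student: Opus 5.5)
Strong consistency from Theorem \ref{Th1} holds under (C1), so $\bm N_n/n\to\bm v$ and $\widehat{\bm\rho}_n\to\bm v$ a.s. The proof then has three parts: a Gaussian approximation for $\widehat{\bm\Theta}_n$ by martingale embedding, its transfer to $\widehat{\bm\rho}_n$ by the delta method, and a recursive comparison of $\bm N_n/n$ with $\widehat{\bm\rho}_n$ that yields the rates.

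\emph{Step 1: the parameter estimates.} Write $\bm M_{n,k}=\sum_{j=1}^n X_{j,k}(\bm\xi_{j,k}-\bm\theta_k)$. Since $X_{j,k}$ is $\mathscr F_{j-1}$-measurable and $\bm\xi_{j,k}$ is independent of $\mathscr F_{j-1}$, $\{\bm M_{n,k}\}$ is a martingale. It is really the partial-sum process of the i.i.d. sequence $\{\bm\xi_{\cdot,k}\}$ evaluated at the random time $N_{n,k}$. Apply the strong invariance principle (KMT or Einmahl for $2+\epsilon$ moments) to each i.i.d. sequence $\{\bm\xi_{j,k}-\bm\theta_k\}_j$ to obtain independent Brownian motions $\bm B^{(k)}$ with
\[
\sum_{j\le t}(\bm\xi_{j,k}-\bm\theta_k)=\bm B^{(k)}(t)\bm V_k^{1/2}+o(t^{1/2})\quad\text{a.s.}
\]
Substituting $t=N_{n,k}$ and using the representation \eqref{eq2.1} gives
\[
N_{n,k}(\widehat{\bm\theta}_{n,k}-\bm\theta_k)=\bm B^{(k)}(N_{n,k})\bm V_k^{1/2}+o(n^{1/2}).
\]
Because $N_{n,k}/n\to v_k$, the law of the iterated logarithm and L\'evy's modulus give $\bm B^{(k)}(N_{n,k})=\bm B^{(k)}(nv_k)+o(\sqrt{n\log\log n})$. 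This bound is only LIL-sharp, so I will postpone the sharper statement until the rate in Step 2 is available. Set $\bm B_k(t)=v_k^{-1/2}\bm B^{(k)}(v_kt)$, which is again a standard Brownian motion. This gives \eqref{eqTh2.3ad} up to the time-change error, and the LIL gives $\widehat{\bm\Theta}_n-\bm\Theta=O(\sqrt{\log\log n/n})$. Condition \ref{ConA} (differentiability at $\bm\Theta$) then gives $\widehat{\bm\rho}_n-\bm v=(\widehat{\bm\Theta}_n-\bm\Theta)\,\partial\bm\rho/\partial\bm y+o(\|\widehat{\bm\Theta}_n-\bm\Theta\|)$, which is \eqref{eqTh2.3} and the first half of \eqref{eqTh2.1}.

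\emph{Step 2: the allocation counts (main obstacle).} Write $\bm N_n-n\bm v=\bm M_n+\sum_{m<n}(\bm p_m-\bm v)$, where $\bm M_n=\sum_m(\bm X_m-\bm p_{m-1})$ is a bounded-increment martingale and so is $O(\sqrt{n\log\log n})$ a.s. Let $D_{m,k}=N_{m,k}-m\widehat\rho_{m,k}$. When $D_{m,k}\ge L_0$, (C1) gives
\[
p_{m,k}-\widehat\rho_{m,k}\le \lambda_{m,k}D_{m,k}/m+\gamma_{m,k}\|\bm D_m\|/m .
\]
Since $D_{m+1,k}-D_{m,k}=X_{m+1,k}-\widehat\rho_{m,k}-(m+1)(\widehat\rho_{m+1,k}-\widehat\rho_{m,k})$, and the last term is controlled by Step 1, $\max_k D_{m,k}^+$ behaves like a stochastic-approximation recursion with drift coefficient $\lambda_0/m$ and noise of order $1/m$ per unit. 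Comparing with the linear recursion $y_{m+1}=(1+\lambda/m)y_m+\text{noise}$, whose solution is $m^{\lambda}\sum_j j^{-\lambda}(\text{noise}_j)$, shows that the solution is of order $\sqrt{n\log\log n}$ exactly when $\lambda_0<1/2$; for $\lambda_0\ge1/2$ the factor $m^{\lambda}$ would dominate. The positive parts are handled this way, and $\sum_kD_{m,k}=0$ then bounds the negative parts, giving $\|\bm N_n-n\widehat{\bm\rho}_n\|=O(\sqrt{n\log\log n})$ and therefore the second half of \eqref{eqTh2.1}. The delicate point is the time-varying threshold region $D_{m,k}\ge L_0$ together with the random coefficients. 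I plan to pass to excursions above level $L_0$ and, on each excursion, dominate $D_{m,k}$ by the linear recursion with $\lambda$ slightly above $\lambda_0$. The same domination, combined with Doob's maximal inequality in place of the LIL, gives $\max_{m\le n}\|\bm N_m-m\bm v\|=O_P(\sqrt n)$, which is \eqref{eqTh2.2}.

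\emph{Step 3: removing the time-change error.} With Step 2 in hand, $N_{n,k}-nv_k=O(\sqrt{n\log\log n})$. Csörgő–Révész increment bounds then give
\[
\bm B^{(k)}(N_{n,k})-\bm B^{(k)}(nv_k)=O\bigl((n\log\log n)^{1/4}(\log n)^{1/2}\bigr)=o(n^{1/2}),
\]
which sharpens Step 1 to \eqref{eqTh2.3ad}. Finally, $\bm W(t)=\bm B(t)\bm V^{1/2}\,\partial\bm\rho/\partial\bm y$ has covariance $t\bm\Sigma$ by \eqref{eqLB}, and this completes \eqref{eqTh2.3}.
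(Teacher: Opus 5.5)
Your Step 2 is essentially the paper's argument. The paper also runs the recursion for $D_{m,k}=N_{m,k}-m\widehat\rho_{m,k}$ only on $\{D_{m,k}>L_0\}$ through the deterministic Lemma \ref{lem1}, which is your excursion device with weights $b_{n,m}\approx (n/m)^{\lambda_0}$. It feeds this recursion the noise $\bm U^{(1)}_n=\sum_{m\le n}\widehat{\bm\rho}_{m-1}+\bm M_n-n\widehat{\bm\rho}_n$ and closes with $\sum_k D_{n,k}^+=\sum_k D_{n,k}^-$.

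The genuine difference is the Gaussian approximation. The paper proceeds in three stages:
\begin{itemize}
\item it takes the LIL for $\widehat{\bm\Theta}_n$ from Lemma \ref{lem2};
\item it derives $N_{n,k}-nv_k=O(\sqrt{n\log\log n})$ from the recursion;
\item only then does it apply the martingale strong approximation of Lemma \ref{lem3} to $\bm Q_n$, whose increments are $X_{m,k}(\bm\xi_{m,k}-\bm\theta_k)/v_k$.
\end{itemize}
That rate is exactly what verifies the conditional-covariance hypothesis of Lemma \ref{lem3}. Normalizing by $v_k$ rather than $N_{n,k}$ means no random clock ever appears. The vanishing cross-arm conditional covariances make the Brownian blocks independent automatically.

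You instead embed each arm in an i.i.d. partial-sum process and apply the classical KMT/Einmahl principle. You then pay for the clock $N_{n,k}$ with a Cs\"org\H{o}--R\'ev\'esz step once the rate is known. This avoids martingale strong approximation and gives the LIL for $\widehat{\bm\Theta}_n$ directly. It also makes maximal bounds cheap, because $N_{m,k}\le m$ gives $\max_{m\le n}\|\bm B^{(k)}(N_{m,k})\|\le \sup_{t\le n}\|\bm B^{(k)}(t)\|$.

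Three points need repair.

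First, in the paper's indexing $\bm\xi_{j,k}$ is patient $j$'s response under arm $k$. So $\sum_{j\le n}X_{j,k}(\bm\xi_{j,k}-\bm\theta_k)$ is not $\sum_{j\le N_{n,k}}(\bm\xi_{j,k}-\bm\theta_k)$. You must pass to $\bm\eta_{i,k}=\bm\xi_{\tau_{i,k},k}$, where $\tau_{i,k}$ is the time of the $i$-th assignment to arm $k$, and invoke the optional-skipping theorem. What makes this work is that $\bm\xi_j$ is independent of $\sigma(\mathscr F_{j-1},\bm X_j)$. Your stated reason, that $X_{j,k}$ is $\mathscr F_{j-1}$-measurable, is false; only $p_{j-1,k}$ is. The same property must also be used to show that the $K$ selected streams are mutually independent. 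Otherwise the $\bm B^{(k)}$ cannot be taken independent, and $\bm B$ is not a standard Brownian motion.

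Second, your domination step drops the coupling term $\gamma_{m,k}\|\bm D_m\|/m$, which links all arms.

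Third, for \eqref{eqTh2.2}, Doob's inequality on $\max_{m\le n}\|\bm M_m\|$ is not enough. The bound from Lemma \ref{lem1} contains $\sum_{m\le n}|U_{m,k}|m^{-1}(n/m)^{\lambda_0}$. For $\lambda_0\ge 0$ this needs scale-aware control of the noise. That includes its non-martingale part $\sum_{m\le n}(\widehat{\bm\rho}_{m-1}-\bm v)-n(\widehat{\bm\rho}_n-\bm v)$, which Doob does not touch.

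The paper handles the last two points together. It works with $\max_{m\le n}\|\bm N_m-m\widehat{\bm\rho}_m\|/m^{1/2-\alpha}$ for a fixed $\alpha<1/2-\lambda_0$ and absorbs the $\delta$-term by taking $\delta$ small. It then shows $\max_{m\le n}\|\bm U^{(1)}_m\|/m^{1/2-\alpha}=O_P(n^{\alpha})$, using Doob on dyadic blocks for $\bm M$ and the Brownian representation for the $\widehat{\bm\rho}$ part. Your time-changed representation supplies the latter easily.
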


The next theorem tells us that the process of numbers of patients can be approximated in probability by a Brownian motion.
\begin{theorem} \label{Th3} Suppose that Conditions \ref{ConA}, \ref{ConB} and \ref{ConC}
are satisfied. Then (\ref{eqTh2.1}), (\ref{eqTh2.3ad}), (\ref{eqTh2.3}) hold, and
\begin{equation}\label{eqTh3.1}
\max_{m\le n}|\bm N_m - m \widehat{\bm \rho}_m|=o(\sqrt{n})\;\;
\text{ in probability}.
\end{equation}
And accordingly,
\begin{equation}\label{eqApproximation}\max_{m\le n}|\bm N_m - m\bm v- \bm W(m)
|=o(\sqrt{n})\;\; \text{ in probability}.
\end{equation}
In particular,
\begin{equation}\label{eqWeakconvergence}
 n^{-1/2}(\bm N_{[nt]}-nt\bm v)\overset{\mathcal D}\to \bm W(t) \;\text{ and }\;
t n^{1/2}(\widehat{\bm \Theta}_{[nt]}-\bm \Theta)\overset{\mathcal D}\to \bm B(t)\bm V^{1/2}
\end{equation}
in the space $\mathscr{D}_{[0,1]}$ with the Skorohod topology.
\end{theorem}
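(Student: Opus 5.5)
Given Theorem \ref{Th2} (which applies because Condition (C1) holds with $\lambda_0=0<1/2$ after Theorem \ref{Th1} gives strong consistency), (\ref{eqTh2.1}), (\ref{eqTh2.3ad}) and (\ref{eqTh2.3}) are already available. Write $\bm D_m=\bm N_m-m\widehat{\bm\rho}_m$. Then (\ref{eqApproximation}) follows from (\ref{eqTh3.1}) together with (\ref{eqTh2.3}) and the law of the iterated logarithm for $\bm W$, since $m\widehat{\bm\rho}_m-m\bm v=\bm W(m)+o(\|\bm W(m)\|)+o(\sqrt m)$ uniformly in $m\le n$. The weak convergence (\ref{eqWeakconvergence}) then follows from the scaling $\bm W(nt)/\sqrt n\overset{d}=\bm W(t)$ and from (\ref{eqTh2.3ad}). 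So the real content is (\ref{eqTh3.1}), and it suffices to bound $\max_{m\le n}D_{m,k}^+$ for each $k$. The lower side follows because $\sum_k D_{m,k}=0$, so $D_{m,k}^-\le\sum_{j\ne k}D_{m,j}^+$.

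For the recursion, I write $D_{m+1,k}-D_{m,k}=X_{m+1,k}-\widehat\rho_{m+1,k}-m(\widehat\rho_{m+1,k}-\widehat\rho_{m,k})$. I decompose $X_{m+1,k}=p_{m,k}+\Delta M_{m+1,k}$ with martingale differences $\Delta M$, and I write $\widehat\rho_{m+1,k}-\widehat\rho_{m,k}$ via the Bahadur representation as a martingale-like increment $\Delta\eta_{m+1}$ of order $1/m$ plus remainders. This gives
\begin{equation*}
D_{m+1,k}=D_{m,k}+\big(p_{m,k}-\widehat\rho_{m,k}\big)+\Delta M_{m+1,k}-m\Delta\eta_{m+1,k}+r_{m,k},
\end{equation*}
with $\sum_{m\le n}|r_{m,k}|$ controlled by the LIL rates in (\ref{eqTh2.1}). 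On $\{D_{m,k}\ge L_0\}$, Condition \ref{ConC} gives drift $p_{m,k}-\widehat\rho_{m,k}\le\lambda_{m,k}D_{m,k}/m+\gamma_{m,k}\|\bm D_m\|/m$, and $\lambda_{m,k}\to-\infty$ a.s. by (C2), since Theorem \ref{Th1} gives the needed consistency. Thus, for any $\Lambda>0$, eventually $D_{m+1,k}\le D_{m,k}(1-\Lambda/m)+\varepsilon\|\bm D_m\|/m+\text{noise}$ whenever $D_{m,k}\ge L_0$.

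I then run a stochastic approximation comparison. Let $\sigma$ be the last time before $m$ at which $D_{\cdot,k}<L_0$. Iterating from $\sigma$ gives $D_{m,k}\le L_0+1+\sum_{j=\sigma}^{m-1}\prod_{i=j+1}^{m-1}(1-\Lambda/i)\,(\xi_{j+1}+\varepsilon\|\bm D_j\|/j)$, where $\xi_{j+1}=\Delta M_{j+1,k}-j\Delta\eta_{j+1,k}$ has bounded-order variance. The weighted sum $\sum_j (j/m)^{\Lambda}\xi_{j+1}$ behaves like $m^{-\Lambda}\int_0^m s^{\Lambda}dB(s)$, a Gaussian variable with variance about $m/(2\Lambda+1)$. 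After the Gaussian approximation (Skorokhod embedding/strong approximation of the martingale $\sum\xi$ by Brownian motion with error $o(\sqrt n)$), this requires a maximal bound of the form $\max_{m\le n}|m^{-\Lambda}\int_0^m s^\Lambda dB(s)|=O_P(\sqrt{n/\Lambda})$. That bound comes from the Gaussian comparison theorem, or directly from the Ornstein–Uhlenbeck representation after the time change $s=e^u$, whose stationary variance is $1/(2\Lambda+1)$; the maximum over $\log n$ time units costs only a $\sqrt{\log\log}$-type factor, which I absorb by restricting to $m\ge\delta n$ plus the trivial bound on $m\le\delta n$. The $\varepsilon\|\bm D\|/j$ term is handled by summing over $k$, taking the max and absorbing it (Gronwall-type) since $\varepsilon\ll\Lambda$.

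The conclusion is $\limsup_n P(\max_{m\le n}|\bm D_m|>c\sqrt n)\le$ something tending to $0$ as $\Lambda\to\infty$, with $\Lambda$ arbitrary; this yields $o(\sqrt n)$ in probability. The main obstacle is this maximal estimate, namely making the restart at the random times $\sigma$ uniform in $m$ and converting the strong approximation into a uniform $O_P(\sqrt{n/\Lambda})$ bound. I would first try the explicit OU time change, since it reduces everything to the supremum of a stationary Gaussian process over an interval of length $\log(1/\delta)$.
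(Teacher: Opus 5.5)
Your architecture is the paper's. You reduce to $\max_m D_{m,k}^+$ via $\sum_k D_{m,k}=0$ and iterate from the last time $D_{\cdot,k}<L_0$ (this is Lemma \ref{lem1}). You push the drift coefficient below any $-\Lambda$ using (C2) and Theorem \ref{Th1}, and absorb the $\gamma_{m,k}$ term. You then bound a weighted Gaussian maximum by something decaying in $\Lambda$ and let $\Lambda\to\infty$. For the Gaussian part, your Ornstein--Uhlenbeck route is a valid substitute for the paper's Slepian argument; it only costs an extra $\sqrt{\log\Lambda}$.

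However, two steps fail as written. The first is the increment-wise treatment of $\widehat{\bm\rho}$. Under Conditions \ref{ConA}--\ref{ConB}, $\bm\rho$ is differentiable only at $\bm\Theta$, and the Bahadur remainder is only $o(N_{m,k}^{-1/2})$ at the level. So all you have is $m(\widehat{\bm\rho}_m-\bm v)=\bm W(m)+o(\|\bm W(m)\|)+o(\sqrt m)$. Then $m$ times the increment of that error is only $o(\sqrt{m\log\log m})$ per step, and $\sum_{m\le n}|r_{m,k}|$ is not controlled by anything. Even the clean part is a problem: $-(m+1)\big(\bm W(m+1)/(m+1)-\bm W(m)/m\big)=-\Delta\bm W(m+1)+\bm W(m)/m$. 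The drift $\bm W(m)/m$ has partial sums like $\int_0^n\bm W(x)x^{-1}dx$, which are of exact order $\sqrt n$, so it is not a remainder.

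The paper never differences the estimator. It works with the level process $\bm U_n^{(1)}=\sum_{m\le n}\widehat{\bm\rho}_{m-1}+\bm M_n-n\widehat{\bm\rho}_n$ and the summation-by-parts identity (\ref{eqprooflem1.2}), so only the level error $\bm R_n=o(n^{1/2}+F(n))$ is needed. The Gaussian process fed to the comparison theorem is then $\bm G(t)=-\bm W(t)+\int_0^t\bm W(x)x^{-1}dx$, not $\bm W$. Its weighted integral $\int_0^t(x/t)^{-\lambda_0}dG_k(x)$ still has variance $\sigma_{kk}t/(1-2\lambda_0)$.

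The second failing step is the strong approximation of your combined martingale $\sum\xi$ by a Brownian motion; it is not available. The estimator part $\bm Q_n$ is fine: its conditional covariance depends on the $p_{m-1,k}$ only through $N_{n,k}-M_{n,k}$, so Lemma \ref{lem3} applies. The allocation martingale is different. Its conditional covariance is $\sum_m[\mathrm{diag}(\bm p_{m-1})-\bm p_{m-1}'\bm p_{m-1}]$, and under Condition \ref{ConC} the $\bm p_m$ need not converge. ERADE-type rules keep jumping between $\alpha\widehat\rho_{m,k}$ and larger values, and with $m$-dependent $\bm g$ the averages of $p_{m,k}^2$ need not converge at all. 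So (\ref{eqlem3.2}) can fail, and neither Slepian's lemma nor your Ornstein--Uhlenbeck time change applies to this part.

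The paper splits this part off and bounds it directly: $II_{n,k}\le 2n^{\lambda_0}\max_{m\le n}|\sum_{l<m}l^{-\lambda_0}\Delta M_{l,k}|+O(1)$. It applies Burkholder's inequality with fourth moments over geometric blocks of ratio $\theta=1-1/(4\lambda_0)$. This gives an expected maximum of at most $2C_0\sqrt n/(1-2\lambda_0)^{1/4}$, still $o(\sqrt n)$ as $\lambda_0\to-\infty$. Second moments would not do here: the same blocking then gives only $O(\sqrt n)$, with no decay in $|\lambda_0|$. With these two repairs, your argument becomes the paper's proof.
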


The proofs of these three theorems will be stated in  supplementary materials, since they are long and quite technical.  Theorem \ref{thNormality} is a direct corollary of (\ref{eqWeakconvergence}).  By a standard argument given in \cite{CR81} (Pages 258-259), (\ref{eqApproximation}) implies that (\ref{eqNormality}) and (\ref{eqNormalityforTheta}) remain true when the sample size $n$ is replaced by a random variable.

\begin{corollary}\label{thStoptime} Suppose that $\tau_n$ is a positive random variable that takes integer values and $\tau_n/f(n)\to \eta$ in probability, where $f(n)\nearrow \infty$ is a non-random sequence and $\pr(0<\eta<\infty)=1$. Then under Condition \ref{ConA}-\ref{ConC},
$$
 \tau_n^{1/2}(\frac{\bm N_{\tau_n}}{\tau_n}-\bm v)\overset{\mathcal D}\to N(\bm 0,\bm \Sigma) \;\text{ and }\;
 \tau_n^{1/2}(\widehat{\bm \Theta}_{\tau_n}-\bm \Theta)\overset{\mathcal D}\to N(\bm 0, \bm V).
$$
\end{corollary}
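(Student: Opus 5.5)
The plan is the standard random-time-change argument of \cite{CR81} (pp.~258--259), fed by the in-probability Gaussian approximation \eqref{eqApproximation} and the strong approximation \eqref{eqTh2.3ad} of Theorem \ref{Th3}. It is cleanest to handle the two statements jointly by working with the pair of processes
$$\bm Y_n(t)=\Big(n^{-1/2}(\bm N_{[nt]}-[nt]\bm v),\; n^{-1/2}[nt](\widehat{\bm\Theta}_{[nt]}-\bm\Theta)\Big),$$
which by \eqref{eqApproximation} and \eqref{eqTh2.3ad} are uniformly approximated on compacts $[0,T]$ by $n^{-1/2}\big(\bm W(nt),\bm B(nt)\bm V^{1/2}\big)$. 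Note that \eqref{eqApproximation} holds on $[0,T]$ for every fixed $T$: apply Theorem \ref{Th3} with $[nT]$ in place of $n$.

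\textbf{Step 1 (constant limit).} Suppose first that $\eta\equiv c$ is a positive constant. Put $n'=[cf(n)]$, so that $\tau_n/n'\to1$ in probability. On the event $\{|\tau_n/n'-1|\le\delta\}$, the approximations give
$$\bm N_{\tau_n}-\tau_n\bm v=\bm W(\tau_n)+o_P(\sqrt{n'}).$$
Then
$$\bm W(\tau_n)-\bm W(n')=O_P\big(\sqrt{\delta n'}\big)$$
by the modulus of continuity of Brownian motion, which follows from scaling together with $\sup_{|s-1|\le\delta}|\bm W(s)-\bm W(1)|\to0$ as $\delta\to0$. Consequently
$$\tau_n^{-1/2}(\bm N_{\tau_n}-\tau_n\bm v)=n'^{-1/2}\bm W(n')+o_P(1),$$
and $n'^{-1/2}\bm W(n')\sim N(\bm 0,\bm\Sigma)$ exactly. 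The same argument applied to $\bm B(\cdot)\bm V^{1/2}$ gives the $\widehat{\bm\Theta}$ part with limit covariance $\bm V$. This completes the deterministic case.

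\textbf{Step 2 (random $\eta$).} This is the main obstacle, since $\eta$ may be dependent on the whole sample path, and so on the Brownian motions. I would follow Cs\"org\H{o}--R\'ev\'esz (equivalently R\'enyi's mixing argument) in three parts.

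First, reduce to $\eta$ taking finitely many values $c_1,\dots,c_r$ on events $A_i$. To do this, approximate $\eta$ by a discretization $\eta'$ with $\pr(|\eta-\eta'|>\delta)$ small, and control the error exactly as in Step 1.

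Second, on each $A_i$, Step 1 shows that
$$\tau_n^{-1/2}(\bm N_{\tau_n}-\tau_n\bm v)-n_i^{-1/2}\bm W(n_i)\to0 \quad\text{in probability},\qquad n_i=[c_if(n)].$$

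Third, identify the limit law as $N(\bm 0,\bm\Sigma)$ even though the events $A_i$ depend on the path. The key fact is the mixing property: for any fixed event $A$, increments of $\bm W$ over $[M,n_i]$ with $M$ fixed are asymptotically independent of $A$. Any $A$ can be approximated by an event in $\sigma(\bm W(s),\bm B(s):s\le M)$ up to small probability, once we use the fact that on the probability space of the Gaussian approximation the data are measurable with respect to the Brownian motions plus independent noise. If that measurability is unavailable, I would instead use the stable convergence of the martingale CLT underlying Theorem \ref{Th3}. Either way, $n_i^{-1/2}\bm W(n_i)$ converges $\mathcal F$-mixing to $N(\bm0,\bm\Sigma)$. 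Hence $\sum_i I_{A_i}\,n_i^{-1/2}\bm W(n_i)$ also converges to $N(\bm0,\bm\Sigma)$, and the same holds for the $\bm B$ component with covariance $\bm V$.

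Finally, letting $\delta\to0$ in the discretization closes the argument. I expect the mixing/measurability justification in the third part of Step 2 to be the only delicate point; the rest is routine bookkeeping with $o_P$ terms.
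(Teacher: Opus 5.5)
Your proposal is correct and is essentially the paper's argument: the paper only cites the Cs\"org\H{o}--R\'ev\'esz random-time-change argument (\cite{CR81}, pp.~258--259) applied to the Gaussian approximation \eqref{eqApproximation} (with \eqref{eqTh2.3ad} for $\widehat{\bm\Theta}$), and your Steps 1--2 spell that argument out. The step you flag as delicate needs neither the measurability of the data with respect to the Brownian motions, which the paper does not establish, nor a separate stable CLT: $n^{-1/2}\bm W(n)$ is $\sigma(\bm B)$-measurable and mixing relative to $\sigma(\bm B)$, since its increments beyond any fixed $M$ are independent of $\sigma(\bm B(s):s\le M)$, so for an arbitrary event $A$ and bounded continuous $f$ you get $\ep[f(n^{-1/2}\bm W(n))I_A]=\ep[f(n^{-1/2}\bm W(n))\,\pr(A\mid\sigma(\bm B))]\to \ep f(\bm Z)\,\pr(A)$ with $\bm Z\sim N(\bm 0,\bm\Sigma)$.
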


  Corollary \ref{thStoptime} is useful in the situation that the adaptive design is stopped at a random time. In all of the above results, the Condition \ref{ConC} can be replaced by Condition C$^{\prime}$ by symmetry.

\section{Conclusions remarks}
We have proposed a general framework of randomized-adaptive designs for general multiple-treatment clinical trials that are asymptotic efficient and can target any desired allocation. The general framework is flexible for us to define new efficient designs with desirable properties. With the framework, we find an efficient  randomized response-adaptive design that seems to give us everything we want in a response-adaptive randomization procedure: it is most random with minimum asymptotic selection bias,  it attains the lower bound of the allocation variabilities and it can target any allocation proportion.
The theoretical results in this paper are basing on the assumption that the sample size is large.  In a fixed sample size, it may still present dilemmas regarding the tradeoffs among randomization, variability, and optimality. A good estimator of the distribution parameter will improve the convergence rate of the asymptotics.  We suggest using the Bayesian approach to estimate the parameters in using the response-adaptive procedures for a small sample size.

 In practice, responses may not be available immediately after the patients have been treated. However, there are no logistical difficulties in incorporating delayed responses into the framework of this paper. One can update the estimators when responses become available.  Under widely satisfied conditions for the delay mechanism, \cite{HZCC08}  showed that the estimator $\widetilde{\bm\Theta}_m$  with delayed responses has the same asymptotic properties as the estimator $\widehat{\bm\Theta}_m$ with the assumption of immediate responses. So, substituting $\widehat{\bm\Theta}_m$ by  $\widetilde{\bm\Theta}_m$ will not affect the asymptotic properties of the sample allocation proportions and the proofs. \cite{ZR07} developed response-adaptive randomization procedures for survival trials in which the outcomes of treatments are both delayed and censored. The procedures proposed in  this paper can be generalized to randomization procedures for survival trials by following the discussion of \cite{ZR07}.

 In this paper, we prove that there is a lower bound among the asymptotic selection biases (resp. an upper bound among the asymptotic entropies) of all adaptive designs with the same target allocation proportion. However, it is not an easy work to verify whether a design attains the lower bound (resp. upper bound) or not when the allocation probabilities do not converge. The drop-the-loser rule that was proposed by \cite{Ivanva03,Ivanva06} is shown to be an efficient randomized-adaptive design that attains the lower bound of variability for a special kind of allocation proportions (see the discussion of \cite{HR06}). However, previously reported simulations have shown that it becomes more deterministic for small values of success rates of treatments (see \cite{HR03}).   How to find a theoretical verification is an open problem. The same problem is also open for the generalized drop-the-loser rule of \cite{ZCCH06} and the IMU model of \cite{ZHCC11}.

\bigskip



\bibliographystyle{chicago}      
\bibliography{GDBCD}   


\newpage

\appendix
\setcounter{equation}{0}
\renewcommand{\theequation}{A.\arabic{equation}}
\setcounter{table}{0}
\renewcommand{\thetable}{C.\arabic{table}}
\setcounter{table}{0}
\renewcommand{\thefigure}{C.\arabic{table}}

\begin{center}
 { \LARGE\bf  Multiple-treatment Doubly Asymptotically Best  Randomized-adaptive Designs}

 {\Large \bf Supplementary Materials}
\end{center}

\begin{center} {\sc
\href{https://mypage.zjgsu.edu.cn/tjysjkx/zlx2_en/main.htm}{\blue{Li-Xin Zhang}}
}\\
{\sl \small School  of Statistics and Data Science, Zhejiang Gongshang University, Hangzhou 310018} \\
(Email:stazlx@mail.zjgsu.edu.cn)    \\
\end{center}

\renewcommand{\thesection}{\Alph{section}}
 In the supplementary materials, we give the proofs of the asymptotic properties and simulation studies.

\section{Proofs of the asymptotic properties}
\setcounter{equation}{0}

Recall that the $(m+1)$-th patient is assigned
to the treatment $k$ with probability
\begin{equation} \label{allocationappend}p_{m,k}=g_k\big(m,\frac{\bm
N_m}{m},\widehat{\bm \rho}_m\big),\;\; k=1,\ldots, K,
\end{equation}
 where $\widehat{\bm\Theta}_m=(\widehat{\theta}_{m,1},\ldots,\widehat{\bm\theta}_{m,K})$ is the estimator of $\bm\Theta=(\bm\theta_1,\ldots,\bm\theta_K)$, and $\widehat{\bm \rho}_m=\bm\rho(\widehat{\bm\Theta}_m)$.
  We only consider the Condition C. When   Condition C$^{\prime}$ is satisfied, the arguments  in the following proofs are the same with  $N_{m,k}-m\widehat{\rho}_{m,k}$ being replaced by $m\widehat{\rho}_{m,k}-N_{m,k}$.
We repeat the main conditions here.

\setcounter{condition}{0}
\begin{condition} \label{ConAappend} The proportion function
$\bm y=(\bm y_1,\ldots,\bm y_K)=(y_{11}, \ldots, y_{1d}, \ldots,
y_{K1}, \ldots,y_{Kd})\to \bm  \rho(\bm y): \Cal R^{d\times K} \to
(0,1)^K$  is a continuous function and is   differentiable at $\bm\Theta$ with $\bm\rho(\bm\Theta)=\bm v$.
\end{condition}
\begin{condition}\label{ConBappend}
The parameter estimator $\widehat{\bm \theta}_{m,k}$ of $\bm\theta_k$ based on an $m$-patient study has the Bahadur-type representation
\begin{equation}\label{eq2.1append}
 \widehat{\bm \theta}_{m,k}
=N_{m,k}^{-1}\sum_{j=1}^m X_{j,k}\bm \xi_{j,k}+o(N_{m,k}^{-1/2}) \; a.s. \; \text{ as } m\to \infty
\end{equation}
with $\bm\theta_k= \ep\bm\xi_{1,k}$ and
$\ep \|\bm  \xi_{1,k}\|^{2+\epsilon}<\infty$ for some $\epsilon>0$, $k=1,\ldots,K$.
\end{condition}

\begin{condition}\label{ConCappend} Suppose that the
allocation probabilities of the $(m+1)$-th patient $\bm g(m,\bm
N_m/m, \widehat{\bm \rho}_m)$ can be written in the following form:
\begin{align} \label{eqConditionforEffappend}p_{m,k}= g_k(m,\frac{\bm
N_m}{m},\widehat{\bm \rho}_m) & \le \widehat{\rho}_{m,k}+\lambda_{m,k}\big[\frac{N_{m,k}}{m}-
\widehat{\rho}_{m,k}\big] +\gamma_{m,k}\|\frac{\bm N_m}{m}-\widehat{\bm \rho}_m\|\\
& \text{ if } \;
N_{m,k} -m \widehat{\rho}_{m,k}\ge L_0, \; k=1,2,\ldots, K\nonumber
\end{align}
for some  $L_0\ge 0$, with the conditions that
$$ \limsup_{m\to \infty} \gamma_{m,k}\le 0 \;a.s.\; \text{ and } \limsup_{m\to \infty}\lambda_{m,k}\le \lambda_0<1\;\; a.s.,
\quad k=1,2,\ldots, K, \eqno{(C1)} $$
$$  \lambda_{m,k}\to -\infty \; a.s.  \;\; \text{ on the event }
\;\; \Big\{\frac{\bm N_m}{m}\to \bm v,\; \widehat{\bm \rho}_m\to \bm v\Big\},
\quad k=1,2,\ldots, K. \eqno{(C2)}
$$
\end{condition}

The main asymptotic properties are stated as follows which are Theorems 6.1-6.3.

\begin{theorem} \label{Th1append}
 Suppose that  in the Bahadur-type representation \eqref{eq2.1append}, $\ep \|\bm  \xi_{1,k}\|<\infty$ and $\bm\theta_k= \ep\bm\xi_{1,k}$,
 $k=1,\ldots,K$, and that the target proportion function
$\bm y\to \bm  \rho(\bm y): \Cal R^{d\times K}\to
(0,1)^K$  is a continuous function. Let $0\le \lambda<1$, $\Pi=\{(\pi_1,\ldots, \pi_K)\in (0,1)^K: \sum_i\pi_i=1\}$.   Assume that the allocation function $\bm g(n,\bm x, \bm y)$, $\bm x,\bm y\in \Pi$ satisfies   one of the following conditions:
\begin{description}
  \item[\rm (i)] For any $\epsilon>0$, there exists $n_0$ such that $g_k(n,\bm x,\bm y)-y_k\le  \lambda(x_k-y_k)+\epsilon$ whenever $x_k>y_k+\epsilon$, and $n\ge n_0$;
  \item[\rm (ii)] For any $\epsilon>0$, there exists $n_0$ such that
  \begin{equation}\label{eq:Th1append.1} \frac{g_i(m,\bm x,\bm y)}{y_i}-\frac{ g_j(m,\bm x,\bm y)}{y_j}\le \lambda \left(\frac{x_i}{y_i}-\frac{x_j}{y_j}\right)+\epsilon
  \end{equation} 
  whenever $x_i/y_i> x_j/y_j+\epsilon$, and $n\ge n_0$;
  \item[\rm (iii)] For any $\epsilon>0$, there exists $n_0$ such that \eqref{eq:Th1append.1} holds whenever $x_i-y_i> x_j-y_j+\epsilon$, and $n\ge n_0$;
  \item[\rm (iv)] There exists positive and continuous functions $\alpha_k(y)$ ($0<y<1$), $k=1,\ldots, K$, such that  any $\epsilon>0$, there exists $n_0$ for which \eqref{eq:Th1append.1} holds whenever $\frac{x_i-y_i}{\alpha_i(y_i)}> \frac{x_j-y_j}{\alpha_j(y_j)}+\epsilon$  and $n\ge n_0$.
\end{description}
Then $
\bm N_n/n\to \bm v$ a.s., $\widehat{\bm\Theta}_n\to \bm\Theta$ a.s. and $\widehat{\bm \rho}_n\to \bm v$ a.s.
\end{theorem}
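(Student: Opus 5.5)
We plan to argue via a Lyapunov/ODE-type comparison on the sample proportions. The argument runs pathwise on a probability-one event, with the noise controlled by martingale strong laws.

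\textbf{Step 1: reduction to a deterministic perturbed recursion.} Write $\bm N_n=\sum_{m=1}^n \bm X_m$ and $\bm M_n=\sum_{m=1}^n(\bm X_m-\ep[\bm X_m|\mathscr F_{m-1}])$. The increments of $\bm M_n$ are bounded martingale differences, so $\bm M_n/n\to\bm 0$ a.s.; in fact $\max_{m\le n}\|\bm M_m\|=o(n)$ a.s. Next, since $\ep\|\bm\xi_{1,k}\|<\infty$, the strong law applied to the i.i.d. sequence $\{\bm\xi_{j,k}\}_j$, indexed by the random times at which treatment $k$ is chosen, gives $\widehat{\bm\theta}_{m,k}\to\bm\theta_k$ a.s. on the event $\{N_{m,k}\to\infty\}$. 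The Bahadur remainder in \eqref{eq2.1append} is $o(1)$ there. On $\{N_{m,k}\not\to\infty\}$ the estimator is eventually frozen at a fixed value.

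The key preliminary is that $N_{n,k}\to\infty$ a.s. for every $k$. We will show that each condition (i)--(iv) forces $p_{m,k}$ to be bounded below whenever $N_{m,k}/m$ is small, i.e.\ when some other coordinate is over-represented. Under (i), if $x_k$ is near $0$, some $x_j>y_j+\epsilon$, and summing the constraints gives a positive total mass pushed away from $j$. Then the conditional Borel--Cantelli lemma applies. Once this holds, continuity of $\bm\rho$ gives $\widehat{\bm\rho}_m\to\bm v$ a.s.

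\textbf{Step 2: a potential function.} Set $\bm x_m=\bm N_m/m$. Then
$$\bm x_{m+1}-\bm x_m=\frac{1}{m+1}\big(\bm p_m-\bm x_m\big)+\frac{\Delta\bm M_{m+1}}{m+1}.$$
For case (i) we take $D_m=\max_k (x_{m,k}-\widehat\rho_{m,k})^+$. Whenever $D_m>\epsilon$, the maximizing coordinate has drift
$$p_{m,k}-x_{m,k}\le (\lambda-1)(x_{m,k}-\widehat\rho_{m,k})+\epsilon<-(1-\lambda)\epsilon/2 \quad (\epsilon\text{ small}).$$
It follows that $D_m$ cannot stay above $2\epsilon$ on a set of times of positive density, and it cannot make excursions above $3\epsilon$ infinitely often. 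The argument is the standard one: summing the recursion over a block $[m_1,m_2]$ gives a decrease of order $\log(m_2/m_1)$, which beats the $o(1)$ fluctuation of $\sum\Delta\bm M/(m+1)$ over blocks. The variation of $\widehat{\bm\rho}_m$ over blocks is also $o(1)$ by Step 1. Hence $\limsup D_m\le3\epsilon$, so $\max_k(x_k-v_k)^+\to0$, and since $\sum_k x_k=\sum_k v_k=1$ we get $\bm x_m\to\bm v$.

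For (ii)--(iv) we use instead $D_m=\max_{i,j}(u_i-u_j)$, where $u_i=x_i/y_i$, or $x_i-y_i$, or $(x_i-y_i)/\alpha_i(y_i)$ respectively, and show the same block-contraction for this spread. The hypothesis of each case says that $g_i/y_i-g_j/y_j$ is dominated by $\lambda$ times the spread. The spread vanishing, together with $\sum x=\sum y=1$, forces $\bm x=\bm y$.

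\textbf{Main obstacle.} The delicate points are the interplay between the moving target $\widehat{\bm\rho}_m$ and the block argument, and the bootstrap in Step 1 that guarantees every arm is sampled infinitely often before consistency of $\widehat{\bm\rho}_m$ is available. In cases (ii)/(iv), the quantity $u_i$ also involves $y_i$, so its increment picks up a term $x_i\Delta(1/y_i)$; this term is small only after Step 1 is established. For Step 1 we first try a contradiction argument. On the event that some arm is sampled finitely often, its estimator freezes, so $\widehat{\bm\rho}_m$ still converges to some (wrong) limit in $\Pi$. The block argument of Step 2 then runs with that limit and gives $\bm x_m\to$ that limit. All of its coordinates are positive, which contradicts $N_{m,k}$ being bounded.
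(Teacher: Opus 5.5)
Your overall architecture matches the paper's. On every path $\widehat{\bm\rho}_m$ converges to some $\bm u$ in the open simplex (frozen estimators for finitely-sampled arms, consistent ones otherwise). One then shows $\bm N_m/m\to\bm u$, which forces every $N_{m,k}\to\infty$ and hence $\bm u=\bm v$. Your max-type potentials also work for (i) and (ii): for $u_i=x_i/y_i$ the frozen-$\bm y$ drift of $u_i-u_j$ is $\frac{1}{m+1}\big[(g_i/y_i-g_j/y_j)-(u_i-u_j)\big]$, which is exactly what the hypothesis bounds.

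\textbf{The gap is in (iii) and (iv).} In (ii)--(iv) the conclusion \eqref{eq:Th1append.1} always bounds $g_i/y_i-g_j/y_j$ against $x_i/y_i-x_j/y_j$; only the trigger changes. For your choice $u_i=(x_i-y_i)/\alpha_i(y_i)$, the drift of $u_i-u_j$ is $\frac{1}{m+1}\big[\frac{g_i-x_i}{\alpha_i(y_i)}-\frac{g_j-x_j}{\alpha_j(y_j)}\big]$, and \eqref{eq:Th1append.1} does not control this quantity. So the step ``the hypothesis says $g_i/y_i-g_j/y_j$ is dominated by $\lambda$ times the spread'' fails for your spread.

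Here is a concrete counterexample in case (iii) with $\lambda=0$. Take $\bm y=(1/2,1/4,1/4)$, $\bm x-\bm y=(t,t,-2t)$ and $\bm g=(2/3,0,1/3)$.
\begin{itemize}
\item The only triggered pairs are $(1,3)$ and $(2,3)$.
\item Since $g_1/y_1=g_3/y_3=4/3$ and $g_2/y_2=0$, condition \eqref{eq:Th1append.1} holds for every $\epsilon>0$.
\item Yet $(g_1-x_1)-(g_3-x_3)=\frac1{12}-3t>0$ for $t<1/36$.
\end{itemize}
So the spread $\max_i(x_i-y_i)-\min_i(x_i-y_i)=3t$ strictly increases. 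Your block contraction, which needs uniformly negative drift of the spread whenever it exceeds $\epsilon$, therefore does not follow.

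\textbf{How to repair it.} The paper uses the quadratic potential $V=\sum_i(x_i-y_i)^2/\alpha_i(y_i)$ together with the identity
\[
\sum_i (g_i-y_i)l_i=\tfrac12\sum_{i,j}y_iy_j\Big(\frac{g_i}{y_i}-\frac{g_j}{y_j}\Big)(l_i-l_j),\qquad l_i=\frac{x_i-y_i}{\alpha_i(y_i)},
\]
which holds because $\sum_i g_i=\sum_i y_i=1$.
\begin{itemize}
\item The right side is controlled pair by pair by the hypothesis; pairs with $|l_i-l_j|\le\epsilon$ contribute only $O(\epsilon)$.
\item The same identity with $\bm x$ in place of $\bm g$ gives exactly $V$.
\item Hence $\langle\nabla V,\bm h\rangle\le-2(1-\lambda)V+O(\epsilon)$, and Lemma~\ref{lemODE} finishes.
\end{itemize}
Alternatively, keep your max-spread scheme but use $u_i=x_i/y_i$ in all of (ii)--(iv). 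At the extreme pair one has $x_{i^*}/y_{i^*}\ge 1\ge x_{j^*}/y_{j^*}$, so $l_{i^*}\ge0\ge l_{j^*}$. Moreover $l_{i^*}-l_{j^*}\ge cD$, where $D$ is the $x/y$-spread and $c=\min_i y_i/\alpha_i(y_i)>0$ near $\bm u$. The trigger therefore fires once $D>\epsilon/c$, and the drift is at most $(\lambda-1)D+\epsilon$. Near-extremal pairs behave the same way, which handles index switching.

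\textbf{Smaller points.}
\begin{itemize}
\item Drop the conditional Borel--Cantelli route in Step 1. For $K\ge3$, none of (i)--(iv) bounds $p_{m,k}$ below when $x_k$ is small. For example, under (i) with $\bm y=(1/3,1/3,1/3)$ and $\bm x=(\delta,0.2-\delta,0.8)$, only $g_3$ is constrained, so $\bm g=(0,1-g_3,g_3)$ is admissible. Your contradiction argument already suffices.
\item In case (i), the displayed bound $(\lambda-1)(x_k-\widehat\rho_{m,k})+\epsilon<-(1-\lambda)\epsilon/2$ is false when $x_k-\widehat\rho_{m,k}$ is just above $\epsilon$. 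Apply (i) with $\epsilon'=(1-\lambda)\epsilon/2$ instead.
\end{itemize}
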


If Condition (C1) is satisfied, then (i) is satisfied. 

\begin{theorem} \label{Th2append} Suppose that Conditions \ref{ConAappend}, \ref{ConBappend} and
(C1) with $\lambda_0<1/2$ are satisfied. Then
\begin{equation}\label{eqThappend2.1}\widehat{\bm \rho}_n- \bm v=O\Big(\sqrt{\frac{\log\log
n}n}\Big) \quad  \text{ and  }\quad \bm N_n-n\bm
v=O\big(\sqrt{n\log\log n}\big) \; a.s.,
\end{equation}
\begin{equation}\label{eqThappend2.2} \max_{m\le n}\|\bm N_m-m\bm
v\|=O\big(\sqrt{n}\big) \; \text{in probability}.
\end{equation}
Further, there exists an $(K\times d)$-dimensional standard Brownian motion $\bm
B(t)=(\bm B_1(t),\ldots, \bm B_K(t))$ such that
\begin{equation}\label{eqThappend2.3}
 \widehat{\bm\Theta}_n-\bm\Theta =\frac{\bm B_n\bm V^{1/2}}{n}+o(n^{-1/2})\;\; a.s.
\end{equation}
and
\begin{equation}
\label{eqThappend2.4}  \widehat{\bm \rho}_n-\bm v =\frac{\bm W(n)}{n} +o\Big(\frac{\|\bm W(n)\|}{n}\Big)
  +o(n^{-1/2}) \;\; a.s.,
\end{equation}
where $$\bm W(t)=\bm B(t)\bm V^{1/2}\frac{\partial
\bm \rho}{\partial\bm y}\big|_{\bm \Theta}$$
 is a $K$-dimensional Browian motion with the variance-covariance matrix $\bm\Sigma$,
 \begin{equation*}
 \bm
V_k=\Var(\bm \xi_{1,k})=\left(\Cov\big[\xi_{1,ki},\xi_{1,kj}\big];
i,j=1,\ldots,d\right), \quad k=1,\ldots, K,
\end{equation*}
\begin{equation}\label{eqVarinceThetaappend}
\bm V=diag(\frac 1{v_1}\bm V_1,\ldots,\frac 1{v_K}\bm V_K),
\end{equation}
\begin{equation}
\label{eqLBappend}\bm \Sigma= \big(\frac{\partial\bm
\rho}{\partial\bm y}\big|_{\bm \Theta}\big)^{\prime} \bm V
\frac{\partial\bm \rho}{\partial\bm y}\big|_{\bm \Theta}
=\sum_{k=1}^K \frac 1{v_k} \big(\frac{\partial\bm  \rho}{\partial
\bm y_k}\big|_{\bm \Theta}\big)^{\prime}
   \bm V_k \frac{\partial\bm
\rho}{\partial \bm y_k}\big|_{\bm \Theta}.
\end{equation}
\end{theorem}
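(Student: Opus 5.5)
We prove Theorem \ref{Th2append} by first establishing the a.s. Gaussian approximation for $\widehat{\bm\Theta}_n$ using only that $N_{n,k}/n\to v_k$ a.s. (which Theorem \ref{Th1append} gives under (C1)). Then we linearize $\bm\rho$, and finally control $\bm N_n-n\bm v$ through a recursion driven by (C1) with $\lambda_0<1/2$.

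\emph{Step 1 (parameter estimates).} Write $\bm M_{n,k}=\sum_{j=1}^n X_{j,k}(\bm\xi_{j,k}-\bm\theta_k)$. This is a martingale with respect to $\mathscr{F}_n$, because $X_{j,k}$ is $\mathscr{F}_{j-1}\vee\sigma(\text{randomization})$-measurable and is independent of $\bm\xi_{j,k}$. Its conditional covariance is $\sum_j p_{j-1,k}\bm V_k$. Equivalently, $\bm M_{n,k}=\bm S_k(N_{n,k})$, where $\bm S_k(t)=\sum_{i\le t}(\bm\xi^{(k)}_i-\bm\theta_k)$ is the partial-sum process of the i.i.d. responses actually observed on treatment $k$. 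These are i.i.d. because each treatment's response sequence is independent of the allocation mechanism. Since $\ep\|\bm\xi_{1,k}\|^{2+\epsilon}<\infty$, the KMT/Einmahl strong approximation gives $\bm S_k(t)=\bm B^{(k)}(t)\bm V_k^{1/2}+o(t^{1/2-\delta})$ a.s., with independent Brownian motions $\bm B^{(k)}$ across $k$. Next use $N_{n,k}=nv_k+o(n)$ a.s., and more precisely $N_{n,k}-nv_k=O(\sqrt{n\log\log n})$ once Step 3 is done; for now the modulus of continuity of Brownian motion with $|N_{n,k}-nv_k|=o(n)$ suffices up to $o(\sqrt n)$ after a bootstrap. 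This yields $\bm B^{(k)}(N_{n,k})=\bm B^{(k)}(nv_k)+o(\sqrt n)$. Set $\bm B_k(t)=v_k^{-1/2}\bm B^{(k)}(v_kt)$, which is a standard Brownian motion. Combining with \eqref{eq2.1append} then gives $\widehat{\bm\theta}_{n,k}-\bm\theta_k=\bm B_k(n)(\bm V_k/v_k)^{1/2}/n+o(n^{-1/2})$, which is \eqref{eqThappend2.3}. The LIL for $\bm B$ gives the $O(\sqrt{\log\log n/n})$ rate for $\widehat{\bm\Theta}_n$.

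\emph{Step 2 ($\widehat{\bm\rho}_n$).} Condition \ref{ConAappend} (differentiability at $\bm\Theta$) gives $\widehat{\bm\rho}_n-\bm v=(\widehat{\bm\Theta}_n-\bm\Theta)\frac{\partial\bm\rho}{\partial\bm y}|_{\bm\Theta}+o(\|\widehat{\bm\Theta}_n-\bm\Theta\|)$. Substituting Step 1 yields \eqref{eqThappend2.4} and the first half of \eqref{eqThappend2.1}.

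\emph{Step 3 (allocation counts).} Write $\bm N_n-n\bm v=\sum_m(\bm X_m-\bm p_{m-1})+\sum_m(\bm p_{m-1}-\bm v)$. The first sum is a bounded-increment martingale $\bm M_n$, which satisfies the LIL and $\max_{m\le n}\|\bm M_m\|=O_P(\sqrt n)$. Let $D_{m,k}=N_{m,k}-m\widehat\rho_{m,k}$. When $D_{m,k}\ge L_0$, (C1) gives $p_{m,k}-\widehat\rho_{m,k}\le(\lambda_0+\epsilon)D_{m,k}/m+\epsilon\|\bm N_m/m-\widehat{\bm\rho}_m\|$ for large $m$. We then study the positive parts $D^+_{m,k}$. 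Consider an excursion of $D_{\cdot,k}$ above $L_0$ starting at time $m_0$. On this excursion,
\[
D_{n,k}\le D_{m_0,k}+(M_{n,k}-M_{m_0,k})+\sum_{m=m_0}^{n-1}\Big[(\lambda_0+\epsilon)\frac{D_{m,k}}{m}+\epsilon\frac{\|\bm D_m\|}{m}\Big]-\big(n\widehat\rho_{n,k}-m_0\widehat\rho_{m_0,k}-\textstyle\sum_{m_0}^{n-1}\widehat\rho_{m,k}\big).
\]
The last bracket equals $\sum_m m(\widehat\rho_{m+1,k}-\widehat\rho_{m,k})$, which is controlled by Step 2 through the increments of the Brownian motion $\bm W$ and is of order $O(\sqrt{n\log\log n})$. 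Treating $\max_k D^+_{m,k}$ with a discrete Gronwall-type comparison with the ODE $\dot y=(\lambda_0+\epsilon')y/t+\dot f$, the solution is $y(n)\lesssim n^{\lambda_0+\epsilon'}\int t^{-\lambda_0-\epsilon'}df$. This is $O(\sqrt{n\log\log n})$ exactly because $\lambda_0<1/2$ makes $\int^n t^{-\lambda_0-\epsilon'}t^{-1/2}dt\asymp n^{1/2-\lambda_0-\epsilon'}$. Since $\sum_k D_{m,k}=0$, bounding the positive parts also bounds $\|\bm D_m\|$. Together with Step 2 this gives $\bm N_n-n\bm v=O(\sqrt{n\log\log n})$ a.s. The same comparison applied to maxima, using $\max_{m\le n}\|\bm M_m\|=O_P(\sqrt n)$ and $\max_{m\le n}\|\bm W(m)\|=O_P(\sqrt n)$, gives \eqref{eqThappend2.2}.

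\emph{Main obstacle.} I expect Step 3 to be the hardest. The Gronwall comparison must handle the coupling term $\epsilon\|\bm D_m\|/m$ across treatments. One-sided control is only available above $L_0$, so excursions must be patched together. The drift of $m\widehat{\bm\rho}_m$ must be absorbed via summation by parts and the Brownian approximation. A secondary subtlety, in Step 1, is the circularity between the rate of $N_{n,k}$ and the random time change. I would resolve it by first using only $N_{n,k}/n\to v_k$ to get $o(\sqrt n)$ errors up to a $\sqrt{n\log\log n}$ LIL bound, and then refining.
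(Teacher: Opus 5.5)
Your outline is sound, and your Step 3 is essentially the paper's argument. The paper applies the discrete Gronwall Lemma~\ref{lem1} to $q_n=N_{n,k}-n\widehat\rho_{n,k}-L_0$. It iterates along the excursion that starts at the last time $q\le 0$, with entry jump $K_0=2$, and the forcing is $\bm U^{(1)}_n=\sum_{m\le n}\widehat{\bm\rho}_{m-1}+\bm M_n-n\widehat{\bm\rho}_n$, i.e.\ your drift bracket plus the martingale. It then uses $\sum_k(\cdot)^-=\sum_k(\cdot)^+$ exactly as you do. The coupling term you flag is absorbed as follows: the paper works with the weighted maximum $A_n=\max_{m\le n}\|\bm N_m-m\widehat{\bm\rho}_m\|/m^{1/2-\alpha}$ with $\lambda_0<1/2-\alpha$, and takes $\delta$ so small that $4K\delta/(1/2-\alpha-\lambda_0)<1/2$.

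The genuine difference is Step 1. The paper does not time-change. It forms the martingale $\bm Q_n$ with increments $X_{m,k}(\bm\xi_{m,k}-\bm\theta_k)/v_k$ and verifies the conditional-covariance hypothesis of the martingale strong approximation (Lemma~\ref{lem3}) from the already established rate $\bm N_n-n\bm v=O(\sqrt{n\log\log n})$. Independence across treatments comes for free from $X_{m,k}X_{m,l}=0$. Your route (observed-response stacks, Einmahl's i.i.d.\ approximation, Brownian modulus at the random time $N_{n,k}$) avoids the martingale invariance principle and delivers independent $\bm B_k$ directly. The martingale route, in exchange, extends at once to a joint approximation of $(\bm M_n,\bm Q_n)$, which the paper exploits in its final theorem. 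Your route rests on Doob's optional-skipping argument, and the reason you give is not the right one: allocations do depend on past responses. The stacks are independent i.i.d.\ sequences because the arm chosen at step $j$ is determined by $\mathscr F_{j-1}$ and independent randomization, while $\bm\xi_j$ is independent of both.

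Two points need tightening. First, the order of proof has to be the paper's. Theorem~\ref{Th1append} and Lemma~\ref{lem2} (the i.i.d.\ LIL at the random times $N_{n,k}\to\infty$) give $\widehat{\bm\rho}_n-\bm v=O(\sqrt{\log\log n/n})$. Step 3 needs only this: by summation by parts your drift bracket is a combination of $m(\widehat{\bm\rho}_m-\bm v)$ and $\sum_m(\widehat{\bm\rho}_m-\bm v)$, hence $O(\sqrt{n\log\log n})$ without any Brownian approximation. The $o(n^{-1/2})$ approximation comes only afterwards. Your remark that $|N_{n,k}-nv_k|=o(n)$ already gives $o(\sqrt n)$ Brownian increments is false; it gives only $o(\sqrt{n\log\log n})$. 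It is the Step 3 rate that yields $\bm B^{(k)}(N_{n,k})-\bm B^{(k)}(nv_k)=o(\sqrt n)$.

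Second, for the maximal bound in probability with $0<\lambda_0<1/2$, knowing only that $\max_{m\le n}\|\bm M_m\|$ and $\max_{m\le n}\|\bm W(m)\|$ are $O_P(\sqrt n)$ does not suffice. The Gronwall kernel $(n/m)^{\lambda_0}$ is of order $n^{\lambda_0}$ for small $m$. You need scale-adapted control, as in the paper's bound $\ep\max_{m\le n}\|\bm M_m\|/m^{1/2-\alpha}\le Cn^{\alpha}$ and the same bound for $\bm B$, both obtained by dyadic blocking.
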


\begin{theorem} \label{Th3append} Suppose that Conditions \ref{ConAappend}, \ref{ConBappend} and \ref{ConCappend}
are satisfied. Then (\ref{eqThappend2.1}), (\ref{eqThappend2.3}), (\ref{eqThappend2.4}) hold, and
\begin{equation}\label{eqThappend3.1}
\max_{m\le n}|\bm N_m - m \widehat{\bm \rho}_m|=o(\sqrt{n})\;\;
\text{ in probability}.
\end{equation}
And accordingly,
\begin{equation}\label{eqappend3.2}\max_{m\le n}|\bm N_m - m\bm v- \bm W(m)
|=o(\sqrt{n})\;\; \text{ in probability}.
\end{equation}
\end{theorem}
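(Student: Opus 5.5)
The plan is to reduce everything to the single estimate (\ref{eqThappend3.1}). Under Condition C, (C1) gives condition (i) of Theorem \ref{Th1append}, so $\bm N_n/n\to\bm v$, $\widehat{\bm\Theta}_n\to\bm\Theta$ and $\widehat{\bm\rho}_n\to\bm v$ a.s. By (C2), $\lambda_{m,k}\to-\infty$ a.s., so eventually $\lambda_{m,k}\le\lambda_0<1/2$ and Theorem \ref{Th2append} applies. This yields (\ref{eqThappend2.1}), (\ref{eqThappend2.3}), (\ref{eqThappend2.4}) and the Brownian motion $\bm W$. Since the conditions only hold eventually, I would apply Theorem \ref{Th2append} to the process restarted at a random time. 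Alternatively, one can observe that its proof uses (C1) only through limsup conditions. Once (\ref{eqThappend3.1}) is available, (\ref{eqappend3.2}) follows from the decomposition
$$\bm N_m-m\bm v-\bm W(m)=(\bm N_m-m\widehat{\bm\rho}_m)+\big(m(\widehat{\bm\rho}_m-\bm v)-\bm W(m)\big).$$
By (\ref{eqThappend2.4}), the second term is $o(\|\bm W(m)\|)+o(\sqrt m)$ a.s., hence $o(\sqrt n)$ uniformly in $m\le n$. Here I use that $\max_{m\le n}\|\bm W(m)\|=O_P(\sqrt n)$ and split off small $m$.

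For (\ref{eqThappend3.1}), write $D_{m,k}=N_{m,k}-m\widehat\rho_{m,k}$ and $M_{m,k}=\sum_{j\le m}(X_{j,k}-p_{j-1,k})$, the latter a martingale. Then
$$D_{m+1,k}-D_{m,k}=(p_{m,k}-\widehat\rho_{m,k})+\Delta M_{m+1,k}-(m+1)(\widehat\rho_{m+1,k}-\widehat\rho_{m,k}).$$
Condition C gives a strong negative drift whenever $D_{m,k}\ge L_0$: the drift term is at most $\lambda_{m,k}D_{m,k}/m+\gamma_{m,k}\|\bm D_m\|/m$, with $\lambda_{m,k}\to-\infty$. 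Because $\sum_kD_{m,k}=0$, controlling the positive parts $D_{m,k}^+$ for all $k$ controls $\|\bm D_m\|$. Hence it suffices to bound the positive excursions.

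Fix $k$ and a large $\Lambda$. Eventually $\lambda_{m,k}\le-\Lambda$ and $\gamma_{m,k}\le\delta$. On an excursion interval $[\sigma,m]$ where $D_{j,k}\ge L_0$, summing the recursion bounds $D_{m,k}$ by $L_0+1$ plus the increment over $[\sigma,m]$ of the process
$$Z_{j}=M_{j,k}-\sum_{i< j}(i+1)(\widehat\rho_{i+1,k}-\widehat\rho_{i,k}),$$
minus $\Lambda\sum_{i=\sigma}^{m}D_{i,k}/i$. For the estimator part I would use the Gaussian approximation: $\sum_i (i+1)(\widehat\rho_{i+1}-\widehat\rho_i)=m\widehat\rho_m-\sum_{i\le m}\widehat\rho_i$, which by (\ref{eqThappend2.4}) is close to $\bm W(m)-\int_0^m\bm W(t)/t\,dt$ up to $o(\sqrt n)$. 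Likewise, $M_{m,k}$ is approximated by a Brownian motion via a martingale strong approximation; its conditional variances are $p(1-p)$, and these are bounded.

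Thus $D$ behaves like a reflected Ornstein–Uhlenbeck-type process with drift $-\Lambda D/t$ driven by a Brownian motion $G$ of bounded variance. On scale $n$, the maximum of such a process is of order $\sqrt{n/\Lambda}$ times a tight factor. The comparison with $\sup_{s\le t}\{G(t)-G(s)-\Lambda\int_s^t D/u\,du\}$ can be made using the Gaussian comparison (a Slepian-type bound on the maximum of the Gaussian OU functional). This gives
$$\limsup_n \pr\big(\max_{m\le n}D_{m,k}^+\ge\epsilon\sqrt n\big)\le C\epsilon^{-1}\Lambda^{-1/2}.$$
Letting $\Lambda\to\infty$ proves (\ref{eqThappend3.1}).

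The main obstacle is this last step. Turning the deterministic drift inequality, which holds only above $L_0$ and only eventually in $m$, into a uniform-in-$m\le n$ bound of order $o(\sqrt n)$ requires carefully comparing the excursions of $D$ with those of the Gaussian OU functional, while keeping all approximation errors $o(\sqrt n)$ uniformly. I would handle this by discretizing $[0,n]$ into blocks of length $\eta n$ and bounding the Gaussian increments on each block. Within a block, the drift forces $D$ down at rate at least $\Lambda D/n$, so $D$ cannot stay above $\epsilon\sqrt n$ for long unless a Brownian oscillation of size $\gtrsim\epsilon\sqrt{n}\,\Lambda$ occurs, which has vanishing probability.
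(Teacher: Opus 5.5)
Your reduction to (\ref{eqThappend3.1}), the derivation of (\ref{eqappend3.2}) from it, and the overall architecture agree with the paper. That architecture is: the drift inequality above $L_0$, the identity $\sum_k D_{m,k}=0$, the Gaussian approximation of the estimator part by $-\bm W(t)+\int_0^t\bm W(x)x^{-1}dx$, and then letting $\Lambda=-\lambda_0\to\infty$.

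\textbf{Main gap: the martingale part.} The step that fails is approximating the allocation martingale $M_{m,k}=\sum_{j\le m}(X_{j,k}-p_{j-1,k})$ by a Brownian motion.
\begin{itemize}
\item Bounded conditional variances are not enough. A Brownian approximation with a fixed covariance needs $\frac1n\sum_{m\le n}\ep[(\Delta\bm M_m)^{\prime}\Delta\bm M_m|\mathscr{F}_{m-1}]$ to converge; Lemma~\ref{lem3} even asks for a rate $O(n^{1-\theta})$.
\item Conditions~\ref{ConAappend}--\ref{ConCappend} do not force this. Condition~\ref{ConCappend} bounds $p_{m,k}$ only from above and only when $D_{m,k}\ge L_0$, and $g$ may depend on $m$.
\item Counterexample: take Example~\ref{example1} with $\alpha$ replaced by $\alpha_m$, alternating between $1/3$ and $2/3$ on dyadic blocks. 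This still satisfies Condition~\ref{ConCappend}, yet for $\bm v=(1/2,1/2)$ the Ces\`aro means of $p_{m,1}(1-p_{m,1})$ oscillate and do not converge.
\end{itemize}
So the martingale contribution, which is not negligible a priori, need not be close to any Gaussian process, and a Slepian-type comparison cannot be applied to it.

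The missing idea is to bound this part directly, using only $|\Delta M_{l,k}|\le 1$, as the paper does:
\begin{itemize}
\item After solving the linear recursion, the martingale enters through $i^{\lambda_0}\max_{m\le i}|\sum_{l<m}l^{-\lambda_0}\Delta M_{l,k}|$.
\item Burkholder's inequality bounds the fourth moment of the inner maximum by $C(\sum_{l<n}l^{-2\lambda_0})^2$.
\item Geometric blocking with ratio $\theta=1-1/(4\lambda_0)$ then gives expected maximum over $i\le n$ at most $C\sqrt n/(1-2\lambda_0)^{1/4}$.
\end{itemize}
This is $o(\sqrt n)$ as $\lambda_0\to-\infty$. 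Only the estimator part goes through Gaussian approximation and comparison.

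\textbf{The excursion step.} The step you flag as the main obstacle is left heuristic, and as written it drops the coupling term. Summing the recursion over an excursion of coordinate $k$ produces $+\sum_i\gamma_{i,k}\|\bm D_i\|/i$. This involves all coordinates, so the drift $-\Lambda\sum_i D_{i,k}/i$ does not control it, and without discounting the crude bound $\delta\max_{i\le n}\|\bm D_i\|\log(m/\sigma)$ cannot be absorbed.

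Lemma~\ref{lem1} removes both problems. Iterate $q_n\le(1+\lambda_0/(n-1))q_{n-1}+\Delta p_n$ back to the last index with $q\le 0$. This gives $q_n^+\le\max_m|\sum_{j=m}^n b_{n,j}\Delta p_j|+O(1)$ with $b_{n,j}\approx(n/j)^{\lambda_0}$. Consequently:
\begin{itemize}
\item the discounted Ornstein--Uhlenbeck functional of the noise appears explicitly, and no reflected-process comparison is needed;
\item the discounting bounds the coupling contribution by $\delta a_n/|\lambda_0|$, where $a_n=\max_{i\le n}\|\bm N_i-i\widehat{\bm\rho}_i\|$, and this is absorbed for small $\delta$.
\end{itemize}

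\textbf{Direction of the Gaussian comparison.} The process $\int_0^t(x/t)^{-\lambda_0}dG_k(x)$ has the same variance as $W_k(t)/\sqrt{1-2\lambda_0}$ but smaller covariances, hence larger increments. So Slepian/Sudakov--Fernique bounds its supremum from \emph{below} by that of the scaled Brownian motion, not from above. The paper's proof applies it in the opposite direction. In logarithmic time the process is a stationary Ornstein--Uhlenbeck process with rate $\frac12-\lambda_0$, and its expected supremum over $[0,n]$ is of order $\sqrt{n\log|\lambda_0|/|\lambda_0|}$ for large $|\lambda_0|$. This is still $o(\sqrt n)$, so the conclusion survives, but your claimed rate $\Lambda^{-1/2}$ needs a logarithmic factor.
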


  Let
$\mathscr{F}_m=\sigma(\bm X_1,\ldots, \bm X_m,
\bm \xi_1,\ldots,\bm \xi_m)$ be the sigma field generated by
previous $m$ stages. Then under $\mathscr{F}_{m-1}$, $\bm X_m$ and
$\xi_m$ are independent, and
$$\ep[\bm X_m|\mathscr{F}_{m-1}]=\bm g(m,\frac{\bm N_{m-1}}{m-1},\widehat{\bm \rho}_{m-1}). $$

To prove our theorems, we first need some lemmas. The first one is
a refinement of Lemma A.3 of \cite{HZ04}.

\begin{lemma}\label{lem1}
 Let $\lambda_0<1 $ and $K_0> 0$ be two real numbers, and let $\{q_n\}$ and $\{p_n\}$
 be two sequences of real numbers for which
\begin{equation} \label{eqlem1.1}
q_n\le q_{n-1}+K_0 +\Delta p_n, \quad n\ge n_0, \end{equation}
\begin{equation} \label{eqlem1.2}
q_n\le (1+\frac{\lambda_0}{n-1}) q_{n-1} +\Delta p_n, \;\; \text{
if } \;\; q_{n-1}> 0, \quad n\ge n_0, \end{equation}
 where $\Delta p_1=p_1$ and $\Delta
p_n=p_n-p_{n-1}$, $n\ge 2$. Further, let $b_{n,n}=1$,
$$ b_{n,m}= \prod_{i=m}^{n-1}(1+\frac{\lambda_0}{i}),\quad m=1,2,\cdots, n-1, \quad
n=1,2,\cdots,
$$ and
$d_{n,m}=b_{n,m}\Delta p_m+b_{n,m+1}\Delta
p_{m+1}+\ldots+b_{n,n}\Delta p_n$, $1\le m\le n$.
  Then there exists a constant $C>0$ which depends only on $n_0$ and $\lambda_0$ such that
\begin{align}\label{eqlem1.3}
q_n^+ &\le \max_{|\lambda_0|\le m\le n}|d_{n,m}|+Cn^{\lambda_0}
\max_{m\le n_0 \vee (|\lambda_0|+1)}q_m^+
+C\max_{1\le m\le n}\Big(\frac{n}{m}\Big)^{\lambda_0} K_0 \nonumber\\
&\le C\Big\{
\sum_{m=1}^n\frac{|p_m|}{m}\Big(\frac{n}{m}\Big)^{\lambda_0}+
\max_{1\le m\le
n}\Big(\frac{n}{m}\Big)^{\lambda_0}|p_m|\Big\}+O(n^{\lambda_0}+1).
\end{align}
\end{lemma}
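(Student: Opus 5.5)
The proof rests on one observation: once $q$ becomes positive, the multiplicative recursion \eqref{eqlem1.2} applies at every subsequent step, so $q_n$ is controlled by unrolling \eqref{eqlem1.2} back to the last time $q$ was non-positive. At that time the additive bound \eqref{eqlem1.1} gives a cheap starting estimate. Set $n_1=n_0\vee(\lfloor|\lambda_0|\rfloor+1)$, so that for $i\ge n_1$ every factor $1+\lambda_0/i$ is positive.

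First I record the product estimate. Since $\log(1+\lambda_0/i)=\lambda_0/i+O(i^{-2})$, we have $b_{n,m}\le C(n/m)^{\lambda_0}$ for all $n_1\le m\le n$, and also $b_{n,m}\ge c(n/m)^{\lambda_0}>0$, with $C,c$ depending only on $\lambda_0$ and $n_0$. Next I fix $n>n_1$ with $q_n>0$; if $q_n\le 0$ there is nothing to prove. I then split into two cases.

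\emph{Case 1.} Suppose there is an index $m$ with $n_1\le m\le n$ such that $q_{m-1}\le 0$, and let $m$ be the largest such index. Then $q_{k-1}>0$ for every $m<k\le n$, so \eqref{eqlem1.2} holds for each such $k$. All the multipliers are positive, so iterating gives
$$q_n\le b_{n,m}q_m+\sum_{j=m+1}^n b_{n,j}\Delta p_j .$$
At time $m$, \eqref{eqlem1.1} together with $q_{m-1}\le 0$ gives $q_m\le K_0+\Delta p_m$. Because $b_{n,m}>0$, substituting this yields
$$q_n\le b_{n,m}K_0+d_{n,m}.$$

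\emph{Case 2.} Suppose instead that $q_{k-1}>0$ for all $n_1< k\le n$. The same iteration starting from time $n_1$ gives
$$q_n\le b_{n,n_1}q_{n_1}^+ + d_{n,n_1+1}\le Cn^{\lambda_0}\max_{m\le n_1}q_m^+ + \max_{|\lambda_0|\le m\le n}|d_{n,m}| .$$

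Combining the two cases with $b_{n,m}\le C(n/m)^{\lambda_0}$ gives the first inequality in \eqref{eqlem1.3}. For $n\le n_1$ the same bound holds trivially after enlarging $C$.

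For the second inequality I apply Abel summation to $d_{n,m}$. Writing $\Delta p_j=p_j-p_{j-1}$,
$$d_{n,m}=b_{n,n}p_n-b_{n,m}p_{m-1}+\sum_{j=m}^{n-1}\big(b_{n,j}-b_{n,j+1}\big)p_j ,$$
and $b_{n,j}-b_{n,j+1}=b_{n,j+1}\lambda_0/j$. Hence
$$|d_{n,m}|\le C\max_{m-1\le j\le n}\Big(\frac nj\Big)^{\lambda_0}|p_j|+C\sum_{j=1}^n\frac{|p_j|}{j}\Big(\frac nj\Big)^{\lambda_0}.$$
The remaining terms are bounded as follows.
\begin{itemize}
\item The $K_0$-term satisfies $\max_m(n/m)^{\lambda_0}K_0\le K_0(n^{\lambda_0}+1)$.
\item The initial term $n^{\lambda_0}\max_{m\le n_1}q_m^+$ is a fixed multiple of $n^{\lambda_0}$.
\end{itemize}
Both are therefore $O(n^{\lambda_0}+1)$, which completes the second inequality.

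The main point of care is the sign and positivity bookkeeping when $\lambda_0$ may be negative. The factors $1+\lambda_0/i$ must be positive for the iterated inequality to preserve direction, which is exactly why the range is restricted to $m\ge|\lambda_0|$. One must also use $b_{n,m}>0$ when substituting the bound on $q_m$.
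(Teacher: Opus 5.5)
Your proof is correct and follows essentially the same route as the paper. It locates the last index at which $q$ is non-positive, unrolls \eqref{eqlem1.2} with positive multipliers back to that point, applies \eqref{eqlem1.1} at the crossing step, and gets the second inequality from the Abel-summation identity $d_{n,m}=\lambda_0\sum_{k=m}^{n-1}\frac{p_k}{k}b_{n,k+1}+p_n-b_{n,m}p_{m-1}$ together with $|b_{n,m}|\le C(n/m)^{\lambda_0}$. One small point to tidy: record that product bound for all $1\le m\le n$, not only $m\ge n_1$, because the maximum in the middle expression runs over $|\lambda_0|\le m\le n$; the extension is immediate by absorbing the finitely many factors with $i<n_1$ into $C$.
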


{\bf Proof } Without loss of generality, we assume that $n_0\ge
|\lambda_0|+1$. Then $1+\frac{\lambda_0}{n-1}\ge 0$ if $n\ge n_0$.
The results is obvious if $n\le n_0$. For $n>n_0$, define $l=l(n)$
by $l=\max\{i: n_0<i\le n, q_i\le 0\}$, where $\max\emptyset
=n_0$. Then $q_m> 0$ for $m=l+1,\ldots, n$. According to
(\ref{eqlem1.2}) and (\ref{eqlem1.1}),
\begin{align*}
q_n&\le b_{n,n-1} q_{n-1}+d_{n,n}
 \le b_{n,n-1}\big[(1+\frac{\lambda_0}{n-2})q_{n-2}+\Delta
p_{n-1}\big]
+d_{n,n} \\
&=b_{n,n-2}q_{n-2}+d_{n,n-1}\\
 &\le \ldots \le
b_{n,l+1}q_{l+1}+d_{n,l+2} \le b_{n,l+1}(q_l+K_0+\Delta
p_{l+1})+d_{n,l+2}\\
&= b_{n,l+1}q_l+b_{n,l+1}K_0+d_{n,l+1}.
\end{align*}
Notice that $q_l\le 0$ if $l>n_0$ and $q_l=q_{n_0}$ if $l=n_0$,
and also
\begin{equation}\label{eqprooflem1.1}
 |b_{n,m}|\le C (n/m)^{\lambda_0}, \quad m=1, \ldots, n, n=1,2,\ldots.
 \end{equation}
The first inequality in (\ref{eqlem1.3}) is proved. The second
inequality follows from (\ref{eqprooflem1.1}) and the fact that
\begin{equation}\label{eqprooflem1.2}
d_{n,m} =\lambda_0
\sum_{k=m}^{n-1}\frac{p_k}{k}b_{n,k+1}+p_n-b_{n,m}p_{m-1}.
\end{equation}

\begin{lemma}\label{lem2} {\em (Hu and Zhang, 2004)} For each $k=1,\ldots, K$, we have that
$$
\{N_{n,k}\to \infty\}\; \text{almost surely implies }
\begin{cases}\; \widehat{\theta}_{n,ki}\to \theta_{ki} & \text{ if } \ep\|\bm \xi_{1,k}\|<\infty, \\
\widehat{\theta}_{n,ki}-\theta_{ki}
=O(\sqrt{ \frac{ \log\log N_{n,k} }{ N_{n,k} } }) &  \text{ if } \ep\|\bm \xi_{1,k}\|^2<\infty,
\end{cases}
$$
$i=1,\ldots, d$.
\end{lemma}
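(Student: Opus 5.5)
We prove Lemma \ref{lem2} one treatment at a time, working on the event $\{N_{n,k}\to\infty\}$. The plan is to reduce the adaptively sampled average in \eqref{eq2.1append} to a partial sum of an i.i.d. sequence indexed by a random time. Then the classical strong law and the law of the iterated logarithm can be applied along that random index.

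Fix $k$ and introduce the successive times at which treatment $k$ is assigned: $\tau_1<\tau_2<\cdots$, with $\tau_l=\min\{m>\tau_{l-1}: X_{m,k}=1\}$. Put $\bm\eta_l=\bm\xi_{\tau_l,k}$. Then
\[
\sum_{j=1}^m X_{j,k}\bm\xi_{j,k}=\sum_{l=1}^{N_{m,k}}\bm\eta_l .
\]
The key step is to show that $\{\bm\eta_l\}$ is again i.i.d. with the law of $\bm\xi_{1,k}$. This holds because each $\tau_l$ is a stopping time with respect to $\mathscr{F}_{m-1}$ augmented by $\bm X_m$. Also, conditionally on $\mathscr{F}_{m-1}$, the response $\bm\xi_m$ is independent of $\bm X_m$ and of the past, since $\bm\xi_{m,k}$ is independent of $\mathscr{F}_{m-1}$ and $\bm X_m$ depends only on $\mathscr{F}_{m-1}$ and external randomization. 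A standard optional-sampling argument (as in Doob's theorem on i.i.d. sequences sampled at predictable times, or the argument of \cite{HZ04}) then gives, for bounded measurable $h_1,\ldots,h_L$,
\[
\ep\Big[\prod_{l} h_l(\bm\eta_l)\Big]=\prod_l \ep\big[h_l(\bm\xi_{1,k})\big].
\]
This identity is proved by induction on $L$, conditioning on $\mathscr{F}_{\tau_L-1}$ and $\bm X_{\tau_L}$.

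Given this, the SLLN applied to $\{\bm\eta_l\}$ yields $L^{-1}\sum_{l\le L}\bm\eta_l\to\bm\theta_k$ a.s. whenever $\ep\|\bm\xi_{1,k}\|<\infty$. If $\ep\|\bm\xi_{1,k}\|^2<\infty$, Hartman--Wintner gives instead
\[
\sum_{l\le L}\bm\eta_l-L\bm\theta_k=O\big(\sqrt{L\log\log L}\big)\quad\text{a.s.}
\]
Both statements hold on a full-probability set for every $L\to\infty$. Substituting $L=N_{n,k}\to\infty$ on the event $\{N_{n,k}\to\infty\}$, and using the Bahadur remainder $o(N_{n,k}^{-1/2})$ (which is dominated by both rates), yields both conclusions.

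For the modified estimator \eqref{eq2.2}, the extra terms $\bm\theta_{0,k}/(N_{n,k}+1)$ and the factor $N_{n,k}/(N_{n,k}+1)$ contribute only $O(1/N_{n,k})$, which is negligible against both rates.

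A caveat concerns the event where $N_{n,k}$ stays bounded. There the $\tau_l$ are eventually infinite, so the $\bm\eta_l$ must be defined there, for instance by taking independent copies. This keeps the sequence i.i.d. and does not affect the event of interest.

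The main obstacle is justifying rigorously that the sampled sequence $\{\bm\eta_l\}$ is i.i.d. despite the adaptive, response-dependent assignment. Once that holds, the rest is routine.
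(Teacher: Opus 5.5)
Your proof is correct. Note that the paper itself gives no proof of this lemma; it quotes it from Hu and Zhang (2004), so your argument is a self-contained substitute rather than a reconstruction of one in the text.

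\textbf{Your route.} You use Doob's optional skipping theorem.
\begin{itemize}
\item Since $\bm\xi_m$ is independent of $\mathscr{F}_{m-1}\vee\sigma(\bm X_m)$, and each event $\{\tau_l=m\}$ lies in that $\sigma$-field, the observed responses on treatment $k$, taken in order, form an i.i.d.\ copy of $\{\bm\xi_{j,k}\}$.
\item Padding with independent copies on $\{\tau_l=\infty\}$ is the right way to keep the induction clean, and you handle it correctly.
\item Both conclusions then follow from the classical Kolmogorov SLLN and the Hartman--Wintner LIL along the random index $L=N_{n,k}\to\infty$, together with the Bahadur remainder.
\end{itemize}

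\textbf{The martingale alternative.} The route closer to the rest of the paper is to work directly with the martingale $\sum_{m\le n}X_{m,k}(\bm\xi_{m,k}-\bm\theta_k)$. This is essentially $\bm Q_n$ from the proof of Theorem~\ref{Th2append}, up to the factor $1/v_k$. Conditioning on $\mathscr{F}_{m-1}\vee\sigma(\bm X_m)$, its conditional quadratic variation is $N_{n,k}\bm V_k$.
\begin{itemize}
\item Under $\ep\|\bm\xi_{1,k}\|^2<\infty$, a martingale LIL gives the rate on $\{N_{n,k}\to\infty\}$.
\item Under only $\ep\|\bm\xi_{1,k}\|<\infty$, that route needs an extra truncation step.
\end{itemize}

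\textbf{Comparison.} Your reduction delivers both parts under exactly the stated moment hypotheses, using off-the-shelf i.i.d.\ theorems. The martingale formulation uses only conditional first and second moments given the past. It therefore survives in settings where the skipped sequence is not exactly i.i.d., and it meshes with the joint strong approximation of Lemma~\ref{lem3} used later in the paper.

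A minor point of terminology: the result you invoke is usually called optional \emph{skipping}, not optional sampling.
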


The next lemma is a refinement of the theorem of  \cite{Eb86}
on the strong approximation for martingale vectors. The proof can
be found in \cite{Z04}.

\begin{lemma}\label{lem3}Let  $\{\bm Y_n, \mathscr{G}_n; n\ge 1\}$ be
 a sequence of
martingale differences in $\Cal R^{K\times d}$. Suppose that there exist
constants $0<\theta, \epsilon<1$ and a covariance matrix $\bm T$,
measurable with respect to $\mathscr{G}_m$ for some $m\ge 0$, such that
$$
\ep \|\bm Y_n\|^{2+\epsilon}<\infty,\;\; n=1,2,\cdots,
$$
\begin{eqnarray}\label{eqlem3.2}
\bm  \sum_{m=1}^n \ep\left[ \bm Y_m^{\prime} \bm Y_m | \mathscr{G}_{m-1}\right]-n \bm T=O(n^{1-\theta}) \quad a.s. \quad (\text{ or
in }\;\; L_1).
\end{eqnarray}
Then there exists a $(K\times d)$-dimensional standard Brownian motion $\bm
B(t)$ which is independent of $\bm T$ such that
$$ \sum_{m=1}^n \bm Y_m -\bm B(n) \bm T^{1/2}=O(n^{1/2-\tau}) \; a.s. $$
Here $\tau>0$ is a constant depending only on $\theta$, $\epsilon$
and $K,d$.
\end{lemma}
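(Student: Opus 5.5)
The plan is to reduce the statement to a blockwise Gaussian coupling for a normalized martingale with conditional covariance close to the identity. I would use the Berkes--Philipp/Strassen coupling technique, the route Eberlein's theorem follows. The extra features relative to \cite{Eb86} are that $\bm T$ is random (though $\mathscr{G}_{m}$-measurable for a fixed $m$) and that the covariance condition (\ref{eqlem3.2}) holds only for the cumulative sum, not uniformly over shifted blocks.

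\emph{Normalization.} Write $m_0$ for the fixed index with $\bm T$ measurable with respect to $\mathscr{G}_{m_0}$. The first $m_0$ terms contribute $O(1)$ and are discarded. If $\bm T$ is degenerate, replace $\bm Y_n$ by $\bm Y_n+\bm\eta_n(\bm I-\bm T^{+}\bm T)$, where $\{\bm\eta_n\}$ is an auxiliary i.i.d. $N(\bm 0,\bm I)$ sequence independent of everything. This makes the conditional covariance limit nonsingular, and the added component is removed at the end by projection. For $n>m_0$, set $\bm Z_n=\bm Y_n\bm T^{-1/2}$. Since $\bm T$ is $\mathscr{G}_{m_0}$-measurable, $\{\bm Z_n,\mathscr{G}_n\}_{n>m_0}$ is still a martingale difference sequence. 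By (\ref{eqlem3.2}), $\sum_{m\le n}\ep[\bm Z_m'\bm Z_m|\mathscr{G}_{m-1}]-n\bm I=O(n^{1-\theta})$ a.s. The moment bound passes to $\bm Z_n$ after restricting to the events $\{\|\bm T^{-1/2}\|\le M\}$ and letting $M\to\infty$.

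\emph{Truncation and stopping.} Truncate $\bm Z_n$ at level $n^{1/2-\delta}$ and recentre by conditional expectations. Using $\ep\|\bm Y_n\|^{2+\epsilon}<\infty$ and Borel--Cantelli, the discarded part changes partial sums by only $O(n^{1/2-\tau})$ a.s. To turn the a.s.\ (or $L_1$) cumulative condition into something usable blockwise, introduce, for each constant $C$, the stopping time $\sigma_C=\inf\{n:\|\sum_{m\le n}\ep[\bm Z_m'\bm Z_m|\mathscr{G}_{m-1}]-n\bm I\|>Cn^{1-\theta}\}$. Past $\sigma_C$, replace $\bm Z_n$ by independent standard normals. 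The modified sequence then satisfies the covariance bound deterministically, and since $\pr(\sigma_C<\infty)\to0$ as $C\to\infty$, a.s.\ statements transfer back. In the $L_1$ case I would use Markov's inequality along a geometric subsequence instead.

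\emph{Blocking and coupling.} Split the indices into consecutive blocks $H_k$ of length $\ell_k\asymp k^{a}$ for a suitable $a>0$, and let $\bm U_k$ denote the block sums. Conditionally on $\mathscr{G}$ at the start of $H_k$, a martingale Berry--Esseen/Prokhorov bound (for example Grama--Haeusler type, in multivariate form) controls the distance between the law of $\ell_k^{-1/2}\bm U_k$ and $N(\bm 0,\bm I)$. The error is a power of $\ell_k$ plus the conditional-variance discrepancy on the block, and the latter is bounded by $O(k^{(a+1)(1-\theta)})/\ell_k$ after the stopping step. Given summable Prokhorov distances, the Berkes--Philipp lemma (Strassen's theorem applied conditionally, block by block) produces i.i.d.\ $\bm G_k\sim N(\bm 0,\ell_k\bm I)$ with $\sum_k\pr(\|\bm U_k-\bm G_k\|>\ell_k^{1/2-\kappa})<\infty$. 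Embedding the $\bm G_k$ as increments of a standard Brownian motion $\bm B$, interpolating within blocks by the maximal inequality, and choosing $a$, $\delta$, $\kappa$ appropriately in terms of $\theta,\epsilon,K,d$ should give $\sum_{m\le n}\bm Z_m-\bm B(n)=O(n^{1/2-\tau})$ a.s.

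\emph{Independence and de-normalization.} Each $\bm G_k$ is built with conditional law $N(\bm 0,\ell_k\bm I)$ given the past, which includes $\mathscr{G}_{m_0}$. Hence the law of $\bm B$ conditional on $\mathscr{G}_{m_0}$ is standard and does not depend on $\bm T$, so $\bm B$ is independent of $\bm T$. Multiplying back by $\bm T^{1/2}$ gives the stated approximation $\sum_{m\le n}\bm Y_m-\bm B(n)\bm T^{1/2}=O(n^{1/2-\tau})$ a.s.

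The main obstacle is the blocking and coupling step. The hypothesis controls only the cumulative conditional covariance, so the per-block discrepancy can be as large as the cumulative error $O(k^{(a+1)(1-\theta)})$ rather than anything of block size. Making the multivariate martingale Prokhorov bounds summable then forces a careful joint choice of $a$, $\delta$, $\kappa$ and $\tau$. The stopping-time localization is what makes this bookkeeping legitimate, and I expect it to be the delicate part.
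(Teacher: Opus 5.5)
Your overall route is the right one, but the proposal has a genuine gap in the localization step, and it silently needs a hypothesis stronger than the one printed. The paper gives no proof of this lemma; it refers to \cite{Z04}, a refinement of Eberlein's strong approximation theorem, whose proof is a block-coupling argument of this same type. So your Strassen/Berkes--Philipp scheme fits: blocks $\ell_k\asymp k^{a}$ with $a$ large, so that the cumulative error $O(n_{k+1}^{1-\theta})$ is polynomially $o(\ell_k)$. The exponent bookkeeping can be carried out as you indicate.

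\textbf{The localization step.} For fixed $C$ your construction gives a Brownian motion $\bm B^{(C)}$, but the approximation holds only on $\{\sigma_C=\infty\}$. Different $C$ give different processes. Choosing $\bm B^{(C)}$ according to which event $\{\sigma_C=\infty\}$ occurs does not yield a Brownian motion, because that event depends on the whole future of the martingale. So the step ``$\pr(\sigma_C<\infty)\to 0$, hence a.s.\ statements transfer back'' is not justified for an existence-of-coupling statement. Gluing is harmless over $\mathscr{G}_{m_0}$-measurable partitions such as $\{\|\bm T^{-1/2}\|\le M\}$, since each constructed motion is conditionally standard given $\mathscr{G}_{m_0}$. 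The events $\{\sigma_C=\infty\}$ are not of this kind.

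The fix is to localize block by block. Let $\bm A_n=\sum_{m\le n}\ep[\bm Z_m^{\prime}\bm Z_m|\mathscr{G}_{m-1}]$, computed for the truncated variables, and fix some $\theta'<\theta$. For $n$ in block $H_k$, keep $\bm Z_n$ only if $\|\bm A_j-j\bm I\|\le j^{1-\theta'}$ for every $j$ from the left end of $H_k$ up to $n$; this is a $\mathscr{G}_{n-1}$-measurable event. Otherwise use an auxiliary $N(\bm 0,\bm I)$ vector. Then:
\begin{itemize}
\item the martingale structure is preserved;
\item each block's conditional covariance is within $2n_{k+1}^{1-\theta'}$ of $\ell_k\bm I$ deterministically;
\item because $\bm A_n-n\bm I=O(n^{1-\theta})$ a.s., only finitely many blocks are altered a.s., so one coupling serves the original sequence.
\end{itemize}

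\textbf{The moment hypothesis.} Your truncation/Borel--Cantelli step uses $\sup_n\ep\|\bm Y_n\|^{2+\epsilon}<\infty$, not just finiteness for each $n$ as printed. This is unavoidable, because with the printed hypothesis the lemma is false. Take independent scalar $Y_n=\pm n$, each with probability $1/(2n^2)$, and $Y_n=0$ otherwise. Then $\ep Y_n^2=1$, so the covariance condition holds exactly with $T=1$. But $Y_n=0$ eventually by Borel--Cantelli, so $\sum_{m\le n}Y_m$ is eventually constant and cannot be within $O(n^{1/2-\tau})$ of $B(n)$. The uniform bound is what the paper's application actually supplies ($\ep\|\Delta\bm Q_n\|^{2+\epsilon}\le C_0$), so state it explicitly.

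\textbf{Two smaller points.}
\begin{itemize}
\item In the $L_1$ case a geometric subsequence is too coarse. Monotonicity of $\bm A_n$ only interpolates between $2^j$ and $2^{j+1}$ with error of order $2^j$. Use a polynomial grid $j^b$ with $b$ large, or feed the $L_1$ bound directly into the Berkes--Philipp criterion, which is already stated in expectation.
\item Dropping the auxiliary noise in the degenerate case is not automatic. You need that $\sum_{m\le n}\bm Y_m(\bm I-\bm T^{+}\bm T)$ has conditional covariance $O(n^{1-\theta})$, and is therefore $O(n^{1/2-\tau})$ a.s.
\end{itemize}
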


The proof of the next lemma can be found in \cite{Zhang2024}. 
\begin{lemma}\label{lemODE}  Suppose the real vectors $\bm z_n$ and $\bm r_n$ on $ \Theta\subset\mathcal{R}^d$ satisfies   the following recursive procedure:
\begin{equation}\label{eq:lemODE.1} \bm z_{n+1}=\bm z_n+\gamma_{n+1}\bm h(n, \bm z_n)+\gamma_{n+1}\bm r_{n+1},
\end{equation}
with bounded regression function $\bm h(n,\bm z)$ and   errors $\bm r_n$  and,  $\{\gamma_n\}$ being a positive sequence that tends toward zero, such that $\sum_{n=1}^{\infty}\gamma_n$ diverges,  
\begin{equation}\label{eq:lemODE.2} \sum_{n=1}^{\infty} \gamma_n \bm r_n \text{ converges }   \text{ and } \; dist(\bm z_n,\Theta_c)\to 0,   
\end{equation}
where $\Theta_c\subset \Theta$ is a compact set. 

Suppose that $V(\bm z):\mathcal R^d\to \mathcal R_+$ is a continuous function having the property that   for any $\epsilon>0$, there exists $n_0$ for which
\begin{equation}\label{eq:lemODE.3} \left\langle \frac{\partial}{\partial \bm z}  V(\bm z), \bm h(n,\bm z)\right\rangle\le -c_0 V(\bm z)+\epsilon,\;\; n\ge n_0 \text{ and } \bm z\in \Theta_c^{\delta},
\end{equation}
where   $c_0>0$ is a constant,  $\Theta_c^{\delta}=\{\bm z\in \Theta: \|\bm z-\bm y\|\le \delta, \bm y\in \Theta_c\}$.  Then 
\begin{equation}\label{eq:lemODE.4} dist(\bm z_n, \Theta_c\bigcap\Theta_0)\to 0,
\end{equation}
where $\Theta_0=\{\bm z:V(\bm z)=0\}$ is the set of zeros of $V$.
\end{lemma}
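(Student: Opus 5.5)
The plan is to absorb the convergent error series into a shifted sequence, run a one-step Lyapunov recursion for $V$ along it, and then pass from $V\to0$ to the distance statement by compactness.

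\emph{Shifting out the errors.} Since $\sum_n\gamma_n\bm r_n$ converges, put $\bm R_n=\sum_{k>n}\gamma_k\bm r_k$, so that $\bm R_n\to\bm 0$. Define $\bm w_n=\bm z_n+\bm R_n$. Then (\ref{eq:lemODE.1}) becomes the error-free recursion
$$\bm w_{n+1}=\bm w_n+\gamma_{n+1}\bm h(n,\bm z_n),$$
with $\|\bm w_n-\bm z_n\|\to0$. Because $\bm h$ is bounded and $\gamma_n\to0$, the increments $\bm w_{n+1}-\bm w_n$ tend to $0$. From $dist(\bm z_n,\Theta_c)\to0$, both $\bm z_n$ and $\bm w_n$ lie in $\Theta_c^{\delta/2}$ for all large $n$, which is a compact set.

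\emph{Lyapunov recursion.} I will use that $V$ is continuously differentiable on a neighbourhood of $\Theta_c^{\delta}$; this is implicit in (\ref{eq:lemODE.3}), and I would state it explicitly. Then $\partial V/\partial\bm z$ is uniformly continuous there. A first-order Taylor expansion gives
$$V(\bm w_{n+1})\le V(\bm w_n)+\gamma_{n+1}\big\langle \tfrac{\partial}{\partial\bm z}V(\bm w_n),\bm h(n,\bm z_n)\big\rangle+o(\gamma_{n+1}).$$
Next replace $\frac{\partial}{\partial\bm z}V(\bm w_n)$ by $\frac{\partial}{\partial\bm z}V(\bm z_n)$, at cost $o(1)$ by uniform continuity and boundedness of $\bm h$. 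Apply (\ref{eq:lemODE.3}) at $\bm z_n$, and then replace $V(\bm z_n)$ by $V(\bm w_n)$, again at cost $o(1)$ by uniform continuity of $V$. For any $\epsilon>0$ and all large $n$ this yields
$$V(\bm w_{n+1})\le (1-c_0\gamma_{n+1})V(\bm w_n)+2\epsilon\gamma_{n+1}.$$
Iterating, with $\gamma_n\to0$ and $\sum\gamma_n=\infty$ (so $\prod(1-c_0\gamma_k)\to0$), gives $\limsup_n V(\bm w_n)\le 2\epsilon/c_0$. Letting $\epsilon\downarrow0$ gives $V(\bm w_n)\to0$, and hence $V(\bm z_n)\to0$.

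\emph{From $V\to0$ to the distance.} Suppose $dist(\bm z_n,\Theta_c\cap\Theta_0)\ge\eta>0$ along a subsequence. By compactness, a further subsequence converges to some $\bm z^*$. Since $dist(\bm z_n,\Theta_c)\to0$ and $\Theta_c$ is closed, $\bm z^*\in\Theta_c$. By continuity of $V$, $V(\bm z^*)=\lim V(\bm z_n)=0$, so $\bm z^*\in\Theta_c\cap\Theta_0$. This contradicts the distance bound and proves (\ref{eq:lemODE.4}).

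The main obstacle is the Taylor step. The drift inequality is assumed only at the unperturbed point $\bm z_n$ and only on $\Theta_c^\delta$, while the recursion lives at $\bm w_n$. One must therefore check that all the replacements $\bm w_n\leftrightarrow\bm z_n$ cost only $o(1)$ uniformly, which requires $C^1$ regularity of $V$ near $\Theta_c$. If $V$ were merely continuous, I would try first to mollify $V$ on the compact neighbourhood, or else argue via the interpolated ODE trajectories.
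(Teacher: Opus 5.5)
The paper does not prove this lemma; it refers to \cite{Zhang2024}. So there is no in-paper argument to compare against. Judged on its own, your argument is correct.

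The shift $\bm w_n=\bm z_n+\bm R_n$ removes the noise exactly. You rightly invoke (\ref{eq:lemODE.3}) at $\bm z_n$ rather than at $\bm w_n$. The point $\bm w_n$ need not lie in $\Theta$, so it need not belong to $\Theta_c^{\delta/2}$ as defined. What you actually use is only that $\bm z_n$, $\bm w_n$ and the segments $[\bm w_n,\bm w_{n+1}]$ eventually lie in a compact $\mathcal R^d$-neighbourhood of $\Theta_c$ on which $V$ is $C^1$. With that, the following steps are all sound:
\begin{itemize}
\item the inequality $V(\bm w_{n+1})\le(1-c_0\gamma_{n+1})V(\bm w_n)+2\epsilon\gamma_{n+1}$;
\item the bound $\limsup_n V(\bm w_n)\le 2\epsilon/c_0$;
\item the final compactness step.
\end{itemize}

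One point of framing needs correcting. Continuity of $\partial V/\partial\bm z$ is not implicit in (\ref{eq:lemODE.3}); only differentiability is. It also cannot be dispensed with, so neither mollification nor an ODE argument could save the lemma for merely differentiable $V$. Here is a counterexample.
\begin{itemize}
\item Take $d=1$, $\gamma_n=1/n$, $\bm r_n=0$, $\Theta_c=[-1,1]$, $\delta=1$.
\item Let $\phi(z)=\frac12+\int_0^z f$, where $f=1+\sum_{n\ge N}B_n$ and $B_n$ is a continuous bump centred at $2^{-n}$ of width $4^{-n}$ and height $c_02^n/(n+1)$.
\item The bump mass in $[0,t]$ is $o(t)$, so $\phi$ is differentiable everywhere with $\phi'=f\ge 1$.
\item Let $V=\phi^2$; it is differentiable, nonnegative, and has zero set $\{-1/2\}$.
\item Put $h(2^{-n})=-(n+1)2^{-n-1}$, and $h=-c_0\phi/(2f)$ at all other points of $[-2,2]$; this $h$ is bounded.
\end{itemize}
Then $V'h\le -c_0V$ on $[-2,2]$. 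At the points $2^{-n}$ this holds because $f(2^{-n})$ is huge and $\phi\le 1$ there for $N$ large. Yet, starting from $z_N=2^{-N}$, the recursion gives $z_n=2^{-n}\to 0$, with $V(0)=\frac14>0$. So the lemma must be read with $V$ continuously differentiable near $\Theta_c$, exactly as you do.

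The paper's own application needs a small check under this reading. In the proof of Theorem~\ref{Th1append}, case (iv), $V=\sum_i(x_i-y_i)^2/\alpha_i(y_i)$ is not differentiable in $\bm y$ when $\alpha_i$ is merely continuous. Your proof still handles this case:
\begin{itemize}
\item there the $\bm y$-component of $\bm h$ vanishes, and $\sum_{k>n}\gamma_k\bm r_{k,2}=\bm u-\bm y_n$;
\item so $\bm w_n$ has constant $\bm y$-component $\bm u$;
\item hence only the jointly continuous $\partial V/\partial\bm x$ enters your Taylor and replacement steps.
\end{itemize}
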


\bigskip
{\bf Proof of Theorem \ref{Th1append}.}
For each $k$, if $N_{n,k}\to \infty$, then
$\widehat{\theta}_{n,ki}\to \theta_{ki}$ by Lemma \ref{lem2}.
 If $\sup_n N_{n,k}< \infty$, then
$\widehat{\theta}_{n,ki}$ will not change its value when $n$ is large
enough. In either cases, $\widehat{\theta}_{n,ki}$ has a finite
limit for each $i=1,2,\ldots, d$ and $k=1,2,\ldots, K$. It follows
that there is an $\bm u=(u_1,\cdots,u_K)$ with $0<u_k<1$,
$k=1,2,\ldots, K$ and $u_1+u_2+\ldots+u_K=1$ such that
\begin{equation}\label{eqproofTh1.2}
\widehat{\bm \rho}_n\to \bm u \;\; a.s.
\end{equation}
by the continuity of $\bm \rho(\cdot)$. For proving the theorem, it is sufficient to show that
\begin{equation}\label{eqproofTh1.4}
 \lim_{n\to \infty} \frac{\bm N_n}{n}  =\bm u \quad a.s.
\end{equation}
In fact, (\ref{eqproofTh1.4}) implies that $N_{n,k}\to
\infty$ a.s., $k=1,2,\cdots, K$. By Lemma \ref{lem2} again,
$\widehat{\bm \Theta}_n\to \bm \Theta$ a.s. So, the limit $\bm u$
in (\ref{eqproofTh1.2}) and (\ref{eqproofTh1.4}) must be $\bm
v=\bm \rho(\bm \Theta)$ according to the continuity of
$\bm \rho(\cdot)$.

For showing (\ref{eqproofTh1.4}),   we let  $\bm
M_n=\sum_{m=1}^n\Delta \bm M_m$, where $\Delta \bm M_m=\bm
X_m-\ep[\bm X_m|\mathscr{F}_{m-1}]$ is a sequence of bounded martingale differences. Then
$$ \bm N_n=\bm N_{n-1}+\bm g\big(n-1,\frac{\bm N_{n-1}}{n-1},\widehat{\bm \rho}_{n-1}\big)
+\Delta \bm M_n$$
and
$$ \sum_{n=1}^{\infty}\frac{\ep[(\Delta X_{m,i})^2|\mathscr{F}_{m-1}]}{n^2}\le \sum_{n=1}^{\infty}\frac{1}{n^2}<\infty, \;\; i=1,\ldots,K. $$
It follows that 
$$ \sum_{n=1}^{\infty} \frac{\Delta \bm M_n }{n} \text{ converges almost surely}. $$
Write  $\bm x_n=\frac{\bm N_n}{n}$, $\bm y_n=\widehat{\bm \rho}_n$ and $\bm Z_n=(\bm x_n,\bm y_n)$.  Then 
\begin{equation}\label{eqSA} \bm x_{n+1}-\bm x_n =\frac{\bm g\big(n, \bm x_n,\bm y_n\big)-\bm x_n}{n+1}+\frac{\Delta \bm M_{n+1} }{n+1},
\end{equation}
$$ \bm y_{n+1}=\bm y_n+\frac{\bm 0}{n+1}+\frac{\bm r_{n+1,2}}{n+1}, \;\; \bm r_{n+1,2}=(n+1)(\bm y_{n+1}-\bm y_n). $$
By \eqref{eqproofTh1.2}, we have
$$ \sum_{n\le j\le m}\frac{\bm r_{j,2}}{j}=\bm y_m-\bm y_{n-1}\to 0 \; a.s. \text{ as } n,m\to \infty. $$
Thus, for any $\delta>0$, with probability $1$, when $n$ is large enough, $\bm Z_n$ satisfies \eqref{eq:lemODE.1} and \eqref{eq:lemODE.2} with $\gamma_n=1/n$, $\Theta_c=\{\bm z=(\bm x,\bm y): \bm y=\bm u, 0\le x_i\le 1, i=1,2,\ldots, K, \sum_{i=1}^Kx_i=1\}$,  
$$\bm h(n, \bm z)=\left(\bm g\big(n,\bm x,\bm y)\big)-\bm x, \bm 0\right), \bm z=(\bm x,\bm y), \;\text{ and }  \;\bm r_{n+1}=\left(\Delta\bm M_{n+1}, \bm r_{n+1,2}\right). $$

When the condition (i) is satisfied, we choose $V(\bm z)=\sum_{i=1}^K(x_i-y_i)^2I\{x_i>y_i\}$. Then
$\frac{\partial V}{\partial x_i}=2(x_i-y_i)I\{x_i>y_i\}$,
\begin{align*} \left\langle \frac{\partial }{\partial \bm z} V(\bm z), \bm h(n,\bm z)\right\rangle=&
2\sum_{i=1}^K(x_i-y_i)I\{x_i>y_i\}\big(g_i(n,\bm x,\bm y)-x_i\big)\\
=&-2V(\bm z)+2\sum_{i=1}^K(x_i-y_i)I\{x_i>y_i\}\big(g_i\big(n,\bm x,\bm y)-y_i\big).
\end{align*}
By the condition (i), we have for $n\ge n_0$,
\begin{align*}
&\sum_{i=1}^K(x_i-y_i)I\{x_i>y_i+\epsilon\}(g_i\big(n,\bm x,\bm y)-y_i)\\
 \le &  \sum_{i=1}^K(x_i-y_i)I\{x_i>y_i+\epsilon\}[ \lambda(x_i-y_i)+\epsilon]\le  \lambda V(\bm z)+\epsilon
\end{align*}
and
$$\big|\sum_{i=1}^K(x_i-y_i)I\{y_i<x_i\le y_i+\epsilon\}(x_i-g_i\big(n,\bm x,\bm y))\big|\le 2\epsilon.$$
It follows that
$$\left\langle \frac{\partial }{\partial \bm z} V(\bm z), \bm h(n,\bm z)\right\rangle\le -2(1-\lambda)V(\bm z)+5\epsilon,\;\; n\ge n_0. $$
Hence, \eqref{eq:lemODE.3} is satisfied. By Lemma \ref{lemODE}, $dist(\bm  Z_n,\Theta_0\bigcap\Theta_c)\to 0$ a.s., where $\Theta_0\bigcap\Theta_c=\{\bm z: V(\bm z)=0, \bm z\in \Theta_c\}= \{(\bm u,\bm u) \}$.  \eqref{eqproofTh1.4} is shown.

(ii) and (iii) are special cases of (iv) with $\alpha_i(y_i)=y_i$ and $\alpha_i(y_i)=1$, respectively.  For (iv), we choose $V(\bm z)=\sum_{i=1}^K(x_i-y_i)^2/a_i(y_i)$, $\bm z=(\bm x,\bm y)$. Then
$\frac{\partial V}{\partial x_i}=2(x_i-y_i)/a_i(y_i)$,
\begin{align*}
  \left\langle \frac{\partial }{\partial \bm z} V(\bm z), \bm h(n,\bm z)\right\rangle=& 2\sum_{i=1}^K (g_i\big(n,\bm x,\bm y)-x_i)\frac{x_i-y_i}{a_i(y_i)}\\
=&-2V(\bm z)+2\sum_{i=1}^K (g_i\big(n,\bm x,\bm y)-y_i)\frac{x_i-y_i}{a_i(y_i)}.
\end{align*}
Write $g_i=g_i(n,\bm x,\bm y)$, $l_i=\frac{x_i-y_i}{\alpha_i(y_i)}$.  Notice that  $\bm x, \bm y,\bm g\in \{\bm x:x_i\ge 0,  i=1,\cdots, K, \sum_{i=1}^K x_i=1\}$, and $y_i>0$, $1/a_i(y_i)\le c_0$,  $i=1,\ldots,K$, when $\|\bm y-\bm u\|\le \delta$ and $\delta>0$ is small enough. We have for $n\ge n_0$, 
\begin{align*}
&\sum_{i=1}^K\left( g_i- y_i\right)l_i
=\frac{1}{2}\sum_{i,j} y_iy_j \big( g_i/y_i- g_j/y_j\big)\big(l_i-l_j\big) \\
\le &  \frac{1}{2}\sum_{i,j} y_iy_j \big| g_i/y_i- g_j/y_j\big|\epsilon+\frac{1}{2}\sum_{i,j} y_iy_j \big( g_i/y_i- g_j/y_j\big)\big(l_i-l_j\big)I\{|l_i-l_j|>\epsilon\}\\
\le 
& \epsilon + \sum_{i,j} y_iy_j \big( g_i/y_i- g_j/y_j\big)\big(l_i-l_j\big)I\{l_i-l_j>\epsilon\}\\
\le 
& \epsilon + \sum_{i,j} y_iy_j \left[\lambda\big( x_i/y_i- x_j/y_j\big)+\epsilon\right]\big(l_i-l_j\big)I\{l_i-l_j>\epsilon\}\\
\le 
& \epsilon +c_0\epsilon+  \lambda \sum_{i,j} y_iy_j  (x_i/y_i- x_j/y_j) \big(l_i-l_j\big)\left[-I\{0\le l_i-l_j\le \epsilon\}+I\{l_i-l_j>0\}\right]\\
\le 
& \epsilon +c_0\epsilon+  \epsilon  \sum_{i,j} y_iy_j  (x_i/y_i+ x_j/y_j) +\lambda \sum_{i=1}^K\left( x_i- y_i\right)l_i\\
=& (2+c_0)\epsilon+\lambda V(\bm z). 
\end{align*}
 
 Thus,
 \begin{align*}
  \left\langle \frac{\partial }{\partial \bm z} V(\bm z), \bm h(n,\bm z)\right\rangle=-2(1-\lambda)V(\bm z)+(2+c_0)\epsilon, \;\; n\ge n_0, \bm z\in \Theta_c^{\delta}.
\end{align*}
Hence, \eqref{eq:lemODE.3} is satisfied. By Lemma \ref{lemODE}, $dist(\bm Z_n,\Theta_0\bigcap\Theta_c)\to 0$ a.s., where $\Theta_0\bigcap\Theta_c=\{\bm z: V(\bm z)=0, \bm z\in \Theta_c\}= \{(\bm u,\bm u) \}$. \eqref{eqproofTh1.4} is also shown. 
$\Box$

\bigskip
{\bf Proof of Theorem \ref{Th2append}.} Recall
$$ \bm N_n=\bm N_{n-1}+\bm g\big(n-1,\frac{\bm N_{n-1}}{n-1},\widehat{\bm \rho}_n\big)
+\Delta \bm M_n. $$
 According to the Condition (C1), we can assume
that
\begin{align*}
p_{n-1,k}=g_k\big(n-1,\frac{\bm
N_{n-1}}{n-1},\widehat{\bm \rho}_{n-1}\big)  \le &
\widehat{\rho}_{n-1,k}+\lambda_0\big(\frac{N_{n-1,k}}{n-1}-\widehat{\rho}_{n-1,k}\big)+\delta\Big\|\frac{\bm N_{n-1}}{n-1}-\widehat{\bm \rho}_{n-1}\Big\|\\
&\;\; \text{ if } N_{n-1,k}>(n-1)\widehat{\rho}_{n-1,k}+L_0,
\end{align*}
for all $k=1,2,\ldots, K$ and $n\ge n_0$. Write
$$\bm
U_n^{(1)}=\sum_{m=1}^n\widehat{\bm \rho}_{m-1}+\bm
M_n-n\widehat{\bm \rho}_n, \; U_{n}^{(2)}=\sum_{m=1}^{n-1}\|\frac{\bm N_m}{m}-\widehat{\bm \rho}_{m}\|, \; U_{n,k}=U_{n,k}^{(1)}+\delta U_{n}^{(2)}.
$$
Then
\begin{align*}
&N_{n,k}-n\widehat{\rho}_{n,k}
=N_{n-1,k}-(n-1)\widehat{\rho}_{n-1,k}+\Delta
U_{n,k}^{(1)}+p_{n-1,k}-\widehat{\rho}_{n-1,k}
\\
\le & \Big(1+\frac{\lambda_0}{n-1}\Big)
\Big(N_{n-1,k}-(n-1)\widehat{\rho}_{n-1,k}\Big)+\Delta U_{n,k}
\;\; \text{ if } \;\; N_{n-1,k}-(n-1)\widehat{\rho}_{n-1,k}> L_0.
\end{align*}
So
\begin{align*}
N_{n,k}-n\widehat{\rho}_{n,k}-L_0
\le & \Big(1+\frac{\lambda_0}{n-1}\Big)
\Big(N_{n-1,k}-(n-1)\widehat{\rho}_{n-1,k}-L_0\Big)+\Delta U_{n,k}+\frac{\lambda_0L_0}{n-1}\\
&\;\; \text{ if } \;\; N_{n-1,k}-(n-1)\widehat{\rho}_{n-1,k}> L_0
\end{align*}
for all $k=1,2,\ldots, K$ and $n\ge n_0$. Also, it is obvious that
$|p_{n-1,k}-\widehat{\rho}_{n-1,k}|\le 2$. Applying Lemma
\ref{lem1} with  $q_n=N_{n,k}-n\widehat{\rho}_{n,k}-L_0$,
$\Delta p_n=\Delta U_{n,k}+\frac{\lambda_0^+L_0}{n-1}$  and $K_0=2$ yields
\begin{align}
&\big(N_{n,k}-n\widehat{\rho}_{n,k}\big)^+\le \big(N_{n,k}-n\widehat{\rho}_{n,k}-L_0\big)^+ +L_0
\nonumber\\
\le &
\max_{|\lambda_0|\le m\le n}\Big|b_{n,m}\big(\Delta
U_{m,k}+\frac{\lambda_0L_0}{m-1}\big)+b_{n,m+1}\big(\Delta
U_{m+1,k}+\frac{\lambda_0L_0}{m}\big)+\ldots\nonumber \\
& \qquad+b_{n,n}\big(\Delta U_{n,k}+\frac{\lambda_0L_0}{n-1}\big)\Big|
+O(n^{\lambda_0}+1)\nonumber\\
\le &
\max_{|\lambda_0|\le m\le n}\Big|b_{n,m}\Delta
U_{m,k}+b_{n,m+1}\Delta
U_{m+1,k}+\ldots+b_{n,n}\Delta U_{n,k}\Big|
+O(n^{\lambda_0}+1) \label{eqproofTh1.1ad}\\
\le & \delta
\max_{|\lambda_0|\le m\le n}\big|b_{n,m}\Delta
U_{m}^{(2)}+b_{n,m+1}\Delta
U_{m+1}^{(2)}+\ldots+b_{n,n}\Delta U_{n}^{(2)}\big|
\label{eqproofTh1.1ad2}\\
&  + C\Big\{
\sum_{m=1}^n\frac{|U_{m,k}^{(1)}|}{m}\Big(\frac{n}{m}\Big)^{\lambda_0}+
\max_{1\le m\le
n}\Big(\frac{n}{m}\Big)^{\lambda_0}|U_{m,k}^{(1)}|\Big\}+O(n^{\lambda_0}+1)=:\delta I_1+I_2.
\label{eqproofTh1.1}
\end{align}
For $\lambda_0<1/2$, choose $0<\alpha<1/2$ such that $\lambda_0<1/2-\alpha$. Write
$$B_{n,k}=\max_{1\le m\le n} \frac{|U_{m,k}^{(1)}|}{m^{1/2-\alpha}} \text{ and }
 A_{n}=\max_{1\le m\le n} \frac{\|\bm N_{m}-m\widehat{\bm \rho}_{m}|}{m^{1/2-\alpha}}. $$ We first show that
\begin{equation}\label{eq:proofTh2.17} A_{n}=O\left(\sum_{k=1}^KB_{n,k}\right) \;\; a.s.
\end{equation}
Note  $\lambda_0< 1/2-\alpha$. By (\ref{eqproofTh1.1}), we have
\begin{align*}
I_2
 \le & O(1)\Big\{
\sum_{m=1}^n\frac{B_{m,k} m^{1/2-\alpha}}{m}\Big(\frac{n}{m}\Big)^{\lambda_0}\Big\}+O(n^{\lambda_0}+1) \\
= & B_{n,k}O(1)\sum_{m=1}^n\frac{m^{1/2-\alpha}}{m}\Big(\frac{n}{m}\Big)^{\lambda_0} +O(n^{\lambda_0}+1)=B_{n,k}O(n^{1/2-\alpha})\;\; a.s.
\end{align*}

For $I_1$,   observe that
\begin{align}\label{eqproofTh3.4}
b_{n,m}=&\exp\Big\{\sum_{l=m}^{n-1}\log\big(1+\frac{\lambda_0}{l}\big)\Big\}
=\exp\Big\{\lambda_0\sum_{l=m}^{n-1}\big(\frac{1}{l}+\frac{O(1)}{l^2}\big)\Big\}
\nonumber\\
=&\exp\Big\{\lambda_0\log\frac{n}{m}+\frac{O(1)}{m}\Big\}
=\big(\frac{n}{m}\big)^{\lambda_0}\Big(1+\frac{O(1)}{m}\Big),\\
& \text{ as } n\ge m\to \infty. \nonumber
\end{align}
Note $b_{n,m}=b_{l,m}b_{n,l}$. (\ref{eqproofTh3.4}) also holds for fixed $m$, i.e. $b_{n,m}=O(n^{\lambda_0})$. Notice that
$$ \sum_{j=m}^n \Delta U_{j}^{(2)}\big(\frac{n}{j}\big)^{\lambda_0}\frac{O(1)}{j}=O(1)\sum_{j=m}^n \big(\frac{n}{j}\big)^{\lambda_0}\frac{1}{j}=O(n^{\lambda_0}+1).$$
and
$$ \sum_{j=m}^n \big(\frac{n}{j}\big)^{\lambda_0}\frac{(j-1)^{1/2-\alpha}}{j-1}\le \frac{n^{1/2-\alpha}}{1/2-\alpha-\lambda_0}+O(n^{\lambda_0}+1).$$
Then
$$ \delta I_1\le A_{n}  \frac{2\delta n^{1/2-\alpha}}{1/2-\alpha-\lambda_0}+o(n^{1/2-\alpha})\;\; a.s. $$
It follows that
$$ (N_{n,k}-n\widehat{\rho}_{n,k})^+\le A_{n}\frac{2\delta n^{1/2-\alpha}}{1/2-\alpha-\lambda_0}+O(B_{n,k}n^{1/2-\alpha})\; a.s. $$
Note the fact
$\sum_{k=1}^K(N_{n,k}-n\widehat{\rho}_{n,k})^-=\sum_{k=1}^K(N_{n,k}-n\widehat{\rho}_{n,k})^+$. It follows that
\begin{align*}
& \frac{\|\bm N_{n}-n\widehat{\bm \rho}_{n}\|}{n^{1/2-\alpha}}\le \frac{\sum_{k=1}^K|N_{n,k}-n\widehat{\rho}_{n,k}|}{n^{1/2-\alpha}}\\
\le & 2\frac{\sum_{k=1}^K(N_{n,k}-n\widehat{\rho}_{n,k})^+}{n^{1/2-\alpha}}
\le   \frac{4K\delta A_{n}}{1/2-\alpha-\lambda_0}+O(\sum_{k=1}^KB_{n,k}).
\end{align*}
Note that $A_{n}$ and $B_{n,k}$ are non-decreasing in $n$. Choosing $\delta$ small enough such that $\frac{4\delta K}{1/2-\alpha-\lambda_0}<1/2$. Then \eqref{eq:proofTh2.17} holds. That is
\begin{equation}\label{eq:proofTh2.19}
\max_{m\le n} \frac{\|\bm N_m-m\widehat{\bm \rho}_m\|}{m^{1/2-\alpha}}=O\left(\max_{m\le n} \frac{\|\bm U_m^{(1)}\|}{m^{1/2-\alpha}}\right)\; a.s.
\end{equation}

Now, by Theorem
\ref{Th1append}, we have $\bm N_n/n\to \bm v$ a.s. and $v_k>0$,
$k=1,\ldots, K$. Then by Lemma \ref{lem2} we have
\begin{equation}\label{eqproofTh2.1}
\widehat{\bm  \Theta}_n-\bm  \Theta=O\big(\sqrt{\frac{\log\log n}n}\big) \quad a.s.
\end{equation}
So, by the Condition \ref{ConAappend},
\begin{equation}\label{eqproofTh2.2}\widehat{\bm \rho}_n-\bm v
=\bm \rho(\widehat{\bm \Theta}_n)-\bm \rho(\bm \Theta)
=O\big(\sqrt{\frac{\log\log n}n}\big) \quad a.s.
\end{equation}
  On the other hand,
according to the law of iterated logarithm for martingales, we have
$\bm M_n =O(\sqrt{n\log\log n}) \;\; a.s.
$
since $\{\bm M_n\}$ is a martingale with $|\Delta M_{n,k}|\le 1$,
$k=1,2,\ldots,K$. Hence, we conclude that
$$ \bm U_n^{(1)}=O\left(\sqrt{n\log\log n }\right) \quad a.s.,  $$
 which, together with \eqref{eq:proofTh2.19}, yields
 $$ \max_{m\le n} \frac{\|\bm N_m-m\widehat{\bm \rho}_m\|}{m^{1/2-\alpha}}=O(n^{\alpha}\sqrt{\log \log n})\; a.s. $$
 Hence $\|\bm N_n-n\widehat{\bm \rho}_n\|=O(\sqrt{n\log \log n})$ a.s. \eqref{eqThappend2.1} is proved.

Next, let  $\bm Q_n=\sum_{m=1}^n\Delta
\bm Q_m$, where $\Delta \bm Q_m=(\Delta \bm Q_{m,1},\ldots,\Delta
\bm Q_{m,K})=(\Delta Q_{m,ki};i=1,\ldots,d, k=1,\ldots, K)$ and
$\Delta Q_{m,ki}=X_{m,k}\frac{\xi_{m,ki}-\theta_{ki}}{v_k}$,
$i=1,\ldots, d$, $k=1,\ldots, K$. Then $\{\bm Q_n,\mathscr{F}_n\}$ is a
 martingale in $\Cal R^{K\times d}$ with
$$ \ep[\Delta Q_{m,ki}\Delta Q_{m,lj}|\mathscr{F}_{m-1}]=0 \;\; \text{
for all }\;\; i,j \text{ and }\;\; k\ne l $$
and
\begin{align*}
&\sum_{m=1}^n\ep[\Delta Q_{m,ki}\Delta Q_{m,kj}|\mathscr{A}_{m-1}]
=\sum_{m=1}^n \ep[X_{m,k}|\mathscr{F}_{m-1}]\Cov\big[\xi_{1,ki},\xi_{1,kj}\big]/v_k^2\\
=& (N_{n,k}-M_{n,k})\Cov\big[\xi_{1,ki},\xi_{1,kj}\big]/v_k^2 \\
=& n\Cov\big[\xi_{1,ki},\xi_{1,kj}\big]/v_k+O(\sqrt{n\log\log n
}+n^{\lambda_0}) \;\; a.s. \;\; \text{ for all }\;\; i,j, k
\end{align*}
according to (\ref{eqThappend2.1}). So, $\{\bm Q_n,\mathscr{A}_n\}$
is  a martingale with
$\ep\|\Delta\bm Q_n\|^{2+\epsilon}\le C_0
$
and
$$ \sum_{m=1}^n \ep\big[\big\{\Delta\bm Q_m\big\}^{\prime}  \Delta\bm
Q_m \big|\mathscr{A}_{m-1}\big]=n
\bm V+O(\sqrt{n\log\log n }+n^{\lambda_0}) \;\; a.s.
$$
By Lemma \ref{lem3}, we can define an $(K\times d)$-dimensional
 standard Brownian motion $\bm B(t)$ such that
\begin{equation}\label{eq:approxforQ} \bm Q_n =\bm B(n)
\bm V^{1/2}+o(n^{1/2-\tau}) \;\; a.s. \; \text{ for some }
\tau>0.
\end{equation}
On the other hand, with the same argument for proving (A.11) of \cite{HZ04}, we can derived that
$$ \widehat{\bm \Theta}_n-\bm \Theta=\frac{\bm Q_n}{n} +o(n^{-1/2})\;\; a.s.\;\text{ and }$$
$$  \widehat{\bm \rho}_n-\bm v= \big(\widehat{\bm \Theta}_n-\bm \Theta\big)\frac{\partial \bm \rho}{\partial\bm
y}\big|_{\bm \Theta} +o\big(\|\widehat{\bm \Theta}_n-\bm \Theta\|\big)=\frac{\bm Q_n}{n}\frac{\partial \bm \rho}{\partial\bm
y}\big|_{\bm \Theta}+o\big(\frac{\|\bm Q_n\|}{n}\big)+o(n^{-1/2})\;\; a.s.
$$
The proof of (\ref{eqThappend2.3}) and (\ref{eqThappend2.4}) is now completed.

Finally, notice that
\begin{align*}
\bm U_n^{(1)}=\bm M_n+\sum_{m=1}^{n-1} \frac{\bm B(m)}{m}\bm\Sigma^{1/2}-\bm B(n)\bm\Sigma
+o\left(\frac{\|\bm B(m)\|}{m}+\|\bm B(n)\|\right)+o(n^{1/2})\; a.s.
\end{align*}
It follows that
\begin{align*}
\max_{m\le n}\frac{\|\bm U_m^{(1)}\|}{m^{1/2-\alpha}}\le \max_{m\le n}\frac{\|\bm M_m\|}{m^{1/2-\alpha}}+C \max_{m\le n}\frac{\|\bm B(m)\|}{m^{1/2-\alpha}}+o(n^{\alpha}) \;\; a.s.
\end{align*}
Note that for $2^p\le n<2^{p+1}$,
$$ \ep\left[\max_{m\le n}\frac{\|\bm M_n\|}{m^{1/2-\alpha}}\right]
\le \sum_{l=0}^p\ep\left[\max_{2^l\le m\le 2^{l+1}}\frac{\|\bm M_m\|}{2^{(1/2-\alpha)l}}\right]
\le   \sum_{l=0}^p \frac{C2^{(l+1)/2}}{2^{(1/2-\alpha)l}} \le C n^{\alpha},
$$
and similarly,
\begin{equation}\label{eq:boundofB(m)} \ep\left[\max_{m\le n}\frac{\|\bm B(m)\|}{m^{1/2-\alpha}}\right]\le C n^{\alpha}.
\end{equation}
By \eqref{eq:proofTh2.19}, it follows that
\begin{equation}\label{eq:boundofN(m)}
\max_{m\le n} \frac{\|\bm N_m-m\widehat{\bm \rho}_m\|}{m^{1/2-\alpha}}=O\left(\max_{m\le n} \frac{\|\bm U_m^{(1)}\|}{m^{1/2-\alpha}}\right)=O_P(n^{\alpha}).
\end{equation}
Hence, $\max_{m\le n}\|\bm N_m-m\widehat{\bm \rho}_m\|=O_P(n^{1/2})$, which, together with \eqref{eqThappend2.3}, yields \eqref{eqThappend2.2}. The proof is now completed.
$\Box$ 
\bigskip

{\bf Proof of Theorem \ref{Th3append}.} It is sufficient to show \eqref{eqThappend3.1}. Recall that Conditions 
\ref{ConAappend}, \ref{ConBappend} and \ref{ConCappend} are now satisfied. By
Theorem \ref{Th1append} and the Condition (C2), the Condition (C1) is
satisfied for any $\lambda_0<0$. And also, by Theorem \ref{Th2append},
(\ref{eqThappend2.1}), (\ref{eqThappend2.3})  and (\ref{eqThappend2.4}) hold. We will proved
(\ref{eqThappend3.1}) by, for any fixed $\lambda_0<0$,
finding  two  sequences $\{Z_n=Z_n(\lambda_0)\}$ and $\{F(n)\}$ of
positive random variables, such that
\begin{equation}\label{eqproofTh3key}
\begin{aligned}
& \max_{i\le n}\big\|\bm N_i-i\widehat{\bm\rho}_i\big\|\le
Z_n+o(F(n))+o(n^{1/2}) \;\; a.s. \;\;\text{ with }\\
& \;\; \ep Z_n\le C_0
\sqrt{n}/(1-2\lambda_0)^{1/4}  \text{ and } \ep[F(n)]\le C_0\sqrt{n},
\end{aligned}
\end{equation}
for all $n$, where $C_0$ is a   positive constant which does not depend on
$\lambda_0$ and $n$.  The technique of Gaussian approximation and Gaussian comparison theorem will be used to show this estimate.

According to (\ref{eqThappend2.4}), we have 
\begin{align}\label{eqproofTh3.1}
\bm U_n^{(1)} = & \sum_{m=1}^n (\widehat{\bm \rho}_m-\bm
v)-n(\widehat{\bm \rho}_n-\bm v)+\bm M_n \nonumber\\
=& \sum_{m=1}^{n-1} \frac{\bm W(m)}{m}   -\bm W(n)
 +\bm M_n+o\left(\sum_{m=1}^{n-1} \frac{\|\bm W(m)\|}{m}   +\|\bm W(n)\|
 \right)+o(n^{1/2})\nonumber
\\
=& \bm G(n)+\bm M_n+o(F(n))+o(n^{1/2}) \;\; a.s.,
\end{align}
where
$$\bm G(t)= -\bm W(t)+\int_0^t\frac{\bm W(x)}{x}dx\; \text{ and }\; F(t)=\max_{s\le t}\|\bm W(s)\|+ \int_0^t\frac{\|\bm W(x)\|}{x}dx$$
Write $\bm R_n=\bm U_n^{(1)}-\bm G(n)-\bm M_n$. Then $\bm
R_n=o(n^{1/2}+F(n))$ a.s. according (\ref{eqproofTh3.1}). Remember now $\lambda_0<0$.
 By
(\ref{eqproofTh1.1ad}), we have
\begin{align}\label{eqproofTh3.3}
&\big(N_{n,k}-n\widehat{\rho}_{n,k}\big)^+
\nonumber\\
\le &\max_{|\lambda_0|\le m\le n}\big|b_{n,m}\Delta
U_{m,k}+b_{n,m+1}\Delta U_{m+1,k}+\ldots+b_{n,n}\Delta
U_{n,k}\big|+O(1)
\nonumber\\
\le &\delta
\max_{-\lambda_0\le m\le n}\big|b_{n,m}\Delta
U_{m}^{(2)}+b_{n,m+1}\Delta
U_{m+1}^{(2)}+\ldots+b_{n,n}\Delta U_{n}^{(2)}\big|\\
+ &\max_{-\lambda_0\le m\le n}\big|b_{n,m}\Delta
M_{m,k}+b_{n,m+1}\Delta M_{m+1,k}+\ldots+b_{n,n}\Delta
M_{n,k}\big|
\nonumber\\
&+ \max_{-\lambda_0\le m\le n}\big|b_{n,m}\Delta
G_k(m)+b_{n,m+1}\Delta G_k(m+1)+\ldots+b_{n,n}\Delta G_k(n)\big|
\nonumber\\
& +\max_{-\lambda_0\le m\le n}\big|b_{n,m}\Delta
R_{m,k}+b_{n,m+1}\Delta R_{m+1,k}+\ldots+b_{n,n}\Delta
R_{n,k}\big| +O(1)\nonumber \\
=:& \delta I_{n}+II_{n,k}+III_{n,k}+IV_{n,k}+O(1)\;\; a.s.
\end{align}
Applying (\ref{eqprooflem1.2}) and (\ref{eqprooflem1.1}) yields
\begin{align*}
&IV_{n,k}=\max_{-\lambda_0\le m\le n}\left| \lambda_0
\sum_{l=m}^{n-1}\frac{R_{l,k}}{l}b_{n,l+1}+R_{n,k}-b_{n,m}R_{m-1,k}\right|\\
=&\sum_{-\lambda_0\le l\le
n-1}\frac{o(l^{1/2}+F(l))}{l}\big(\frac{n}{l}\big)^{\lambda_0} \\
& +
o(n^{1/2}+F(n))+ \max_{-\lambda_0\le m\le
n}\big(\frac{n}{m}\big)^{\lambda_0}o(m^{1/2}+F(m))
= o(F(n))+o(n^{1/2})\;\; a.s.
\end{align*}
For $III_{n,k}$, observe  \eqref{eqproofTh3.4}.  It follows that
\begin{align*}
& b_{n,m}\Delta G_k(m)+b_{n,m+1}\Delta
G_k(m+1)+\ldots+b_{n,n}\Delta G_k(n) \\
=&\lambda_0
\sum_{l=m}^{n-1}\frac{G_k(l)}{l}b_{n,l+1}+G_k(n)-b_{n,m}G_k(m-1)\\
=&\lambda_0
\sum_{l=m}^{n-1}\frac{G_k(l)}{l}\big(\frac{n}{l}\big)^{\lambda_0}
+G_k(n)-\big(\frac{n}{m}\big)^{\lambda_0}G_k(m-1)\\
&+\lambda_0
\sum_{l=m}^{n-1}\frac{G_k(l)}{l}\big(\frac{n}{l}\big)^{\lambda_0}\frac{O(1)}{l}
-\big(\frac{n}{m}\big)^{-\lambda_0}G_k(m-1)\frac{O(1)}{m}\\
=&\lambda_0\int_m^n\frac{G_k(x)}{x}\big(\frac{n}{x}\big)^{\lambda_0}dx
+G_k(n)-\big(\frac{n}{m}\big)^{\lambda_0}G_k(m)+O(1)\\
&+\sum_{l=1}^{n-1}\frac{O(\sqrt{l\log\log
l})}{l^2}\big(\frac{n}{l}\big)^{\lambda_0}
+\big(\frac{n}{m}\big)^{\lambda_0}O(\frac{\sqrt{m\log\log m}}{m})
\\
=&\int_m^n \big(\frac{x}{n}\big)^{-\lambda_0}d
G_k(x)+O(1)\\
=&\int_0^n \big(\frac{x}{n}\big)^{-\lambda_0}d
G_k(x)-\big(\frac{m}{n})^{-\lambda_0}\int_0^m
\big(\frac{x}{m}\big)^{-\lambda_0}d G_k(x)+O(1)\quad a.s.
\end{align*}
It follows that
\begin{align}\label{eqproofTh3II}
 \max_{i\le n}III_{i,k} \le   2\max_{1\le m\le n}\Big|\int_0^m
\big(\frac{x}{m})^{-\lambda_0}d G_k(x)\Big| +O(1)
=:2\overline{III}_{n,k} +O(1)\;\; a.s.
\end{align}
Note that  the process
\begin{align*}
&\int_0^t\big(\frac{x}{t}\big)^{-\lambda_0}d
 G_k(x) = -\int_0^t\big(\frac{x}{t}\big)^{-\lambda_0}d
W_k(x)+\int_0^t\big(\frac{x}{t}\big)^{-\lambda_0}\frac{ W_k(x)}{x}d
x  \\
=& \big(-1+\frac{1}{\lambda_0}\big)\int_0^t\big(\frac{x}{t}\big)^{-\lambda_0}d
 W_k(x)-\frac{1}{\lambda_0}  W_k(t)
 =\int_0^t\Big[\big(-1+\frac{1}{\lambda_0}\big)\big(\frac{x}{t}\big)^{-\lambda_0}-\frac{1}{\lambda_0}\Big]d
 W_k(x)
\end{align*}
is a mean
zero Gaussian process with covariance function
\begin{align*}&\sigma_{kk}\int_0^s\Big[\big(-1+\frac{1}{\lambda_0}\big)\big(\frac{x}{t}\big)^{-\lambda_0}-\frac{1}{\lambda_0}\Big]
\Big[\big(-1+\frac{1}{\lambda_0}\big)\big(\frac{x}{s}\big)^{-\lambda_0}-\frac{1}{\lambda_0}\Big]d
x \\
= & \frac{\sigma_{kk}}{1-2\lambda_0}\big(\frac{s}{t}\big)^{-\lambda_0}s
\begin{cases} \le \ep\Big[ \frac{  W_k(s)}{\sqrt{1-2\lambda_0}}\frac{  W_k(t)}{\sqrt{1-2\lambda_0}}\Big], &
  0<s\le t, \\
  = \ep\Big[ \frac{  W_k(s)}{\sqrt{1-2\lambda_0}}\frac{  W_k(t)}{\sqrt{1-2\lambda_0}}\Big], &
  t=s>0,
  \end{cases}
\end{align*}
 where $\sigma_{kk}$ is the $(k,k)$th element of $\bm\Sigma$ and is the variance of $W_k(1)$.
Applying the comparison theorem for Gaussian processes (Slepian's lemma, c.f. Corollary 3.12 of \cite{LedouxTalagrand}) yields
$$\ep\Big[ \max_{0< t\le
n}\int_0^t\big(\frac{x}{t}\big)^{-\lambda_0}d G_k(x)\Big]\le
\frac{1}{\sqrt{1-2\lambda_0}}\ep\big[\max_{0< t\le n}W_k(t)\big]
\le \frac{2 }{\sqrt{1-2\lambda_0}}\sqrt{n\sigma_{kk}}.
$$
We conclude that the expectation of the random variable
$\overline{III}_{n,k}$ in (\ref{eqproofTh3II}) satisfies
$$\ep[\overline{III}_{n,k}]\le 2\ep\Big[ \max_{1\le m\le
n}\int_0^m\big(\frac{x}{m}\big)^{-\lambda_0}d G_k(x)\Big]\le
\frac{4}{\sqrt{1-2\lambda_0}}\sqrt{n\sigma_{kk}}.
$$

Next, we consider $II_{n,k}$. If the martingale $\bm M_n$ can be
approximated by a Brownian motion, it can be treated in the same
way as $II_{n,k}$. Unfortunately, this martingale may not satisfy
the condition (\ref{eqlem3.2}) in Lemma \ref{lem3}. We shall
estimate it directly.
 By (\ref{eqproofTh3.4}) and the fact
that $|\Delta M_{n,k}|\le 1$, we have
\begin{align*}
&b_{n,m}\Delta M_{m,k}+b_{n,m+1}\Delta
M_{m+1,k}+\ldots+b_{n,n}\Delta M_{n,k}\\
=&\sum_{l=m}^n\big(\frac{n}{l}\big)^{\lambda_0}\Delta M_{l,k} +
\sum_{l=m}^n\big(\frac{n}{l}\big)^{\lambda_0}\frac{O(1)}{l}  
 =n^{\lambda_0}\sum_{l=m}^{n-1}l^{-\lambda_0}\Delta M_{l,k}+O(1)
 \;\; a.s.
\end{align*}
It follows that
$$II_{n,k}\le 2n^{\lambda_0}\max_{1\le m\le n}\big|
\sum_{l=1}^{m-1}l^{-\lambda_0}\Delta
M_{l,k}\big|+O(1)=:2\overline{II}_{n,k}+O(1) \;\; a.s.$$ Notice
that $\{l^{-\lambda_0}\Delta M_{l,k}\}$ is a sequence of martingale
differences. According to Burkholder's  inequality (c.f. Theorem 2.10 of \cite{hall1980martingale}), we have
\begin{align*}
&\ep\big[\max_{1\le m\le n}\big|
\sum_{l=1}^{m-1}l^{-\lambda_0}\Delta M_{l,k}\big|^4\big] \le
C_0^4\ep  \big|
\sum_{l=1}^{n-1}(l^{-\lambda_0}\Delta M_{l,k})^2\big|^2 \\
 \le &C_0^4\big[ \sum_{l=1}^{n-1}l^{-2\lambda_0}\big]^2\le C_0^4\big[
\int_1^n
x^{-2\lambda_0}dx\big]^2=\frac{C_0^4}{(1-2\lambda_0)^2}n^{2(1-2\lambda_0)},
\end{align*}
where $C_0=18\times 4\times \sqrt{4/3}$. Let $\theta=1-\frac{1}{4\lambda_0}$ for $\lambda_0\le -1$, $\theta=\sqrt{2}$ for $-1<\lambda_0<0$, then
\begin{align*}
&\ep\big[\max_{i< \theta^{q+1}}(\overline{II}_{i,k})^4\big] \le
\sum_{p=0}^q\ep\big[\max_{\theta^p\le i<
\theta^{p+1}}(\overline{II}_{i,k})^4\big]
\le  \sum_{p=0}^q\theta^{4p\lambda_0}\ep\big[\max_{1\le m<
\theta^{p+1}}\big| \sum_{l=1}^{m-1}l^{-\lambda_0}\Delta
M_{l,k}\big|^4\big] \\
\le &\frac{C_0^4}{(1-2\lambda_0)^2}
\sum_{p=0}^q\theta^{2(1-2\lambda_0)(p+1)+4p\lambda_0} = \frac{C_0^4
\theta^{2-4\lambda_0}}{(1-2\lambda_0)^2}
\sum_{p=0}^q\theta^{2p}
\le
\theta^{2q}\frac{C_0^4}{(1-2\lambda_0)^2}\frac{\theta^4\theta^{-4\lambda_0}}{\theta^2-1}.
\end{align*}
Now, for each $n\ge 1$, we can find a
$q$ such that $\theta^q\le n<\theta^{q+1}$.  It follows that
\begin{align*}
&\ep\big[\max_{i\le n}\overline{II}_{i,k}\big]\le
\big\{\ep\big[\max_{i<
\theta^{q+1}}(\overline{II}_{i,k})^4\big]\big\}^{1/4}\le \frac{C_0\theta\theta^{-\lambda_0}}{(1-2\lambda_0)^{1/2}(\theta^2-1)^{1/4}}\sqrt{n}.
\end{align*}
When $\lambda_0\le -1$,  $\theta\le 5/4$,
$\theta^{-\lambda_0}=(1-1/(4\lambda_0))^{-\lambda_0}\le e^{1/4}$ and
$\theta^2-1\ge 1/(-2\lambda_0)$. When $-1< \lambda_0< 0$, $\theta=\sqrt{2}$. Hence
\begin{align*}
&\ep\big[\max_{i\le n}\overline{II}_{i,k}\big] \le \frac{2C_0}{(1-2\lambda_0)^{1/4}}\sqrt{n}.
\end{align*}

At last, we consider $I_{n}$. Write $a_n=\max_{m\le n}\|\bm N_m-m\widehat{\bm \rho}_m\|$. Recall \eqref{eqproofTh3.4},
\begin{align*}
I_{n}\le & \max_{|\lambda_0|\le m\le n}\left| \sum_{j=m}^n \frac{\|\bm N_{j-1}-(j-1)\widehat{\bm \rho}_{j-1}\|}{j-1}\big(\frac{n}{j}\big)^{\lambda_0}\Big(1+\frac{O(1)}{j}\Big)\right|\\
\le &\max_{|\lambda_0|\le m\le n}\left|a_n\int_{m-1}^n\big(\frac{n}{x}\big)^{\lambda_0}\frac{1}{x}dx +O(1)\int_{m-1}^n\big(\frac{n}{x}\big)^{\lambda_0}\frac{1}{x}dx\right|\\
\le & a_n/|\lambda_0|+O(1) \; a.s.
\end{align*}
It follows that
$$\big(N_{n,k}-n\widehat{\rho}_{n,k}\big)^+\le \delta a_n/|\lambda_0|+2(\max_{i\le
n}\overline{II}_{i,k}+\overline{III}_{n,k})+o(F(n))+o(n^{1/2}) \;\; a.s. $$
Now, define
$$ Z_n=Z_n(\lambda_0)=8\sum_{k=1}^K (\max_{i\le
n}\overline{II}_{i,k}+\overline{III}_{n,k}),$$
where
$$ \overline{II}_{i,k}=i^{\lambda_0}\max_{1\le m\le i}\big|
\sum_{l=1}^{m-1}l^{-\lambda_0}\Delta
M_{l,k}\big|, \;\;  \overline{III}_{n,k}= \max_{1\le m\le n}\Big|\int_0^m
\big(\frac{x}{m})^{-\lambda_0}d G_k(x)\Big|.   $$
Then
$$\|\bm N_n-n\widehat{\bm \rho}_n\|\le 2\sum_{k=1}^K (N_{n,k}-n\widehat{\rho}_{n,k})^+\le 2\delta K a_n /|\lambda_0|+ Z_n/2+o(n^{1/2})+o(F(n)) \;\; a.s.$$
Note that $a_n$, $Z_n$ and $F(n)$ are non-decreasing in $n$. Hence
$$a_n=\max_{i\le n}\|\bm N_i-i\widehat{\bm \rho}_i\| \le 2\delta K a_n /|\lambda_0|+ Z_n/2+o(n^{1/2})+o(F(n)) \;\; a.s.$$
By choosing $\delta>0$ such that  $2\delta K/|\lambda_0|<1/2$, we conclude that
\begin{align*}
&\max_{i\le n}\|\bm N_i-i\widehat{\bm \rho}_i\|=a_n\le Z_n +o(F(n))+o(n^{1/2}) \;\; a.s.,  \\
&\ep[Z_n]\le \sqrt{n}\left(\frac{16KC_0}{(1-2\lambda_0)^{1/4}}+\frac{32\sum_{k=1}^K\sqrt{\sigma_{kk}}}{\sqrt{1-2\lambda_0}}\right), \;\; \lambda_0>0,\\
&\ep[F(n)]=\ep[\sup_{s\le n}\|\bm W_s\|]+\int_0^n \frac{\ep[\|\bm W(x)\|]}{x}dx\le 6\sqrt{n}\sqrt{\sum_{k=1}^n \sigma_{kk}}.
\end{align*}
\eqref{eqproofTh3key} is proved. $\Box$

\bigskip
To close this section, we finally show that  if the function $\psi(\cdot)$ in
\begin{equation}\label{eq:Eprade2.1append}  g_{k}\big(m,\bm x ,\bm \rho\big)=g_{k}\big(\bm x ,\bm \rho\big)=\frac{\rho_k\psi\left(\frac{\rho_k}{x_k}\right)}{\sum_{j=1}^K\rho_j\psi\left(\frac{\rho_j}{x_j}\right)}, \; k=1,\ldots,K
\end{equation} is differentiable, then the design behaves the same as the DBCD.

\begin{theorem}  Let $\psi(x)\ge 0, x\ge 0$ be a non-decreasing function with $\psi(1)\ne 0$ and $\dot{\psi}(1)/\psi(1)=\gamma\ge 0$. Assume   that the conditions \ref{ConAappend} and \ref{ConBappend} are satisfied. Then
\begin{equation}\label{eq:likeDBCD}
 n^{1/2}\big(\frac{\bm N_n}{n}-\bm\rho(\bm\Theta)\big)\overset{\Cal D}\to
 N(\bm 0, \bm \Lambda_{\gamma})\;\;\text{ and }\;\;
 n^{1/2}\big(\widehat{\bm \rho}_n-\bm\rho(\bm\Theta)\big)\overset{\Cal D}\to
 N(\bm 0, \bm \Sigma),
 \end{equation}
 where
 $$\bm \Lambda_{\gamma}=\frac{1}{1+2\gamma}\bm\Sigma_1
 +\frac{2(1+\gamma)}{1+2\gamma}\bm \Sigma, \;\; \bm\Sigma_1=diag(\bm v)-\bm v^{\prime}\bm v. $$
 \end{theorem}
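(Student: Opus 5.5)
We will reduce the statement to the known asymptotics of the DBCD of \cite{HZ04} by linearizing the allocation function \eqref{eq:Eprade2.1append} around $(\bm v,\bm v)$. The first step is strong consistency. Since $\psi$ is non-decreasing, $g_i/y_i\le g_j/y_j$ whenever $x_i/y_i\ge x_j/y_j$, so condition (ii) of Theorem \ref{Th1append} holds with $\lambda=0$. Hence $\bm N_n/n\to\bm v$, $\widehat{\bm\Theta}_n\to\bm\Theta$ and $\widehat{\bm\rho}_n\to\bm v$ a.s.

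Next we carry out a Taylor expansion. Write $\bm x=\bm N_m/m$ and $\bm y=\widehat{\bm\rho}_m$, with $\psi(y_j/x_j)=\psi(1)\{1+\gamma(y_j/x_j-1)+o(|y_j/x_j-1|)\}$. Since $\sum_j y_j=\sum_j x_j=1$, we get
$$\sum_j y_j\psi(y_j/x_j)=\psi(1)\Big\{1+\gamma\sum_j \frac{y_j(y_j-x_j)}{x_j}+o(\|\bm x-\bm y\|)\Big\}=\psi(1)\{1+O(\|\bm x-\bm y\|^2)+o(\|\bm x-\bm y\|)\}.$$
It follows that
$$g_k(\bm x,\bm y)=y_k+\gamma(y_k-x_k)+o(\|\bm x-\bm y\|+\|\bm y-\bm v\|).$$
This is exactly the linearization of the DBCD function \eqref{dbcd}, whose derivative $\partial g/\partial\bm x$ at $(\bm v,\bm v)$ is $-\gamma I$ on the simplex and whose derivative $\partial g/\partial \bm y$ is $(1+\gamma)I$. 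Only differentiability of $\psi$ at $1$ is used here.

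The main step is to feed this into the recursion $\bm N_n-n\bm v=\bm M_n+\sum_{m<n}\{\bm g(\bm x_m,\bm y_m)-\bm v\}$. Set $\bm D_m=\bm N_m-m\bm v$. The recursion becomes
$$\bm D_n=\bm M_n+\sum_{m<n}\Big[-\gamma\frac{\bm D_m}{m}+(1+\gamma)(\widehat{\bm\rho}_m-\bm v)\Big]+\sum_{m<n}o\Big(\frac{\|\bm D_m\|}{m}+\|\widehat{\bm\rho}_m-\bm v\|\Big).$$
To control the remainder we need a preliminary rate $\bm D_n=O(\sqrt{n\log\log n})$ a.s. This follows from Theorem \ref{Th2append}: condition (C1) holds with $\lambda_m\to -\gamma\le 0<1/2$ (in the form $g_k-\widehat\rho_{m,k}\le(-\gamma+\epsilon)(x_k-y_k)+\epsilon\|\bm x-\bm y\|$), and \eqref{eqThappend2.1} then gives the rate, so the remainder is $o(\sqrt{n\log\log n})$. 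We expect the real obstacle to be upgrading this remainder to $o_P(\sqrt n)$. For that we use the $O_P(\sqrt n)$ maximal bound \eqref{eqThappend2.2} together with $\widehat{\bm\rho}_m-\bm v=O(\sqrt{\log\log m/m})$. Splitting the sum at $m\le \epsilon n$ and $m>\epsilon n$ gives that $\sum_{m<n}o(\|\bm D_m\|/m+\|\widehat{\bm\rho}_m-\bm v\|)=o_P(\sqrt n)$, because the $o(\cdot)$ factor is uniformly small for large $m$ on an event of probability close to one.

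Finally, we solve the linear recursion, either via Lemma \ref{lem1} with $\lambda_0=-\gamma$ or by its continuous analogue. This gives
$$\bm D_n\approx n^{-\gamma}\sum_{m\le n}m^{\gamma}\{\Delta\bm M_m+(1+\gamma)(\widehat{\bm\rho}_{m-1}-\bm v)\}.$$
Here $\widehat{\bm\rho}_m-\bm v=\bm W(m)/m+o(n^{-1/2})$ by \eqref{eqThappend2.4}, and $\bm M_n$ is a martingale with $n^{-1}\sum\ep[\Delta\bm M_m'\Delta\bm M_m|\mathscr F_{m-1}]\to\bm\Sigma_1$ (because $p_{m,k}\to v_k$), and it is asymptotically independent of $\bm W$ since $\bm X_m$ and $\bm\xi_m$ are conditionally independent. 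A martingale CLT, or the same computation as in \cite{HZ04} and \cite{HZCC08}, gives the variance
$$\frac{\bm\Sigma_1}{1+2\gamma}+\frac{2(1+\gamma)}{1+2\gamma}\bm\Sigma,$$
which proves the first half of \eqref{eq:likeDBCD}. The second half is immediate from \eqref{eqThappend2.4}.
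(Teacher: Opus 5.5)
Your overall route is the paper's. You get consistency from condition (ii) of Theorem \ref{Th1append} with $\lambda=0$, and use the linearization $g_k(\bm x,\bm y)=y_k-\gamma(x_k-y_k)+o(\|\bm x-\bm y\|)$; the extra $\|\bm y-\bm v\|$ in your error term is not needed. You then use (C1) with $\lambda_0=-\gamma$ to obtain the LIL rate from Theorem \ref{Th2append}, and solve the linear recursion for $\bm D_n$ with Lemma \ref{lem1}.

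The one real difference is the final step. The paper strongly approximates $\bm M_n$ and $\bm Q_n$ by independent Brownian motions $\bm b,\bm B$ (Lemma \ref{lem3}) and identifies the limit as the Gaussian process $\bm G_t$ solving \eqref{eq:PDE}. You instead invoke a martingale CLT. That works and needs only $p_{m,k}\to v_k$, with no rate. To have a single martingale array, though, write the $\widehat{\bm\rho}$-term through $\bm Q_m$ rather than through $\bm W$. By summation by parts it equals, up to negligible terms, $n^{-\gamma}(1+\gamma)\sum_{j\le n}\big(\sum_{m=j}^{n}m^{\gamma-1}\big)\Delta\bm Q_j\,\frac{\partial\bm\rho}{\partial\bm y}\big|_{\bm\Theta}$. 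The zero conditional cross-covariance of $\Delta\bm M_j$ and $\Delta\bm Q_j$ then gives $\bm\Lambda_\gamma$. The paper's route yields a uniform $o_P(\sqrt n)$ approximation; yours yields the fixed-$n$ CLT, which is all the statement asks for.

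The gap is in the step you flag as the main obstacle. The ingredients you cite do not give $\sum_{m<n}o\big(\|\bm D_m\|/m+\|\widehat{\bm\rho}_m-\bm v\|\big)=o_P(\sqrt n)$.

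\begin{itemize}
\item The LIL rate for $\widehat{\bm\rho}_m$ gives only $\sum_{m\le n}\sqrt{\log\log m/m}\asymp\sqrt{n\log\log n}$. This excess factor survives your split at $\epsilon n$: on $m>\epsilon n$ the $o(1)$ factor would have to beat $\sqrt{\log\log n}$, which nothing guarantees.
\item The bound $\max_{m\le n}\|\bm D_m\|=O_P(\sqrt n)$ handles $m>\epsilon n$ at a cost of $\log(1/\epsilon)$. On $m\le\epsilon n$ it gives only $O_P(\sqrt{\epsilon n}\log n)$, since tightness at each fixed scale gives no simultaneous control over all dyadic scales.
\end{itemize}

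What you need is $\sum_{m\le n}\big(\|\bm D_m\|/m+\|\widehat{\bm\rho}_m-\bm v\|\big)=O_P(\sqrt n)$, or directly $\sum_{m\le n}\|\bm N_m-m\widehat{\bm\rho}_m\|/m=O_P(\sqrt n)$ if you keep the sharper error $o(\|\bm x-\bm y\|)$. Then, since the $o(1)$ factors tend to $0$ a.s., splitting at a fixed $M$ finishes the argument.

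The paper obtains this from weighted maximal bounds proved inside Theorem \ref{Th2append}. These are \eqref{eq:boundofN(m)}, i.e.\ $\max_{m\le n}\|\bm N_m-m\widehat{\bm\rho}_m\|/m^{1/2-\alpha}=O_P(n^{\alpha})$, and \eqref{eq:boundofB(m)}. They are combined with \eqref{eqThappend2.4}, whose error is $o(\|\bm W(m)\|/m)+o(m^{-1/2})$, not $o(n^{-1/2})$ as you wrote. Summing against $m^{-1/2-\alpha}$ then gives $O_P(\sqrt n)$; this is exactly the role of $a_n$ and $\widetilde a_n$ in the paper's proof. With that substitution your argument goes through.
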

{\bf Proof.} Firstly, since $g_i(\bm x,\bm \rho)/\rho_i\le g_j(\bm x,\bm \rho)/\rho_j$ whenever $x_i/\rho_i\ge x_j/\rho_j$,  by Theorem \ref{Th1append} we still have $\bm N_n/n\to \bm v$ a.s. and $\widehat{\bm \rho}_n\to \bm v$ a.s.  Without loss of generality, we assume  $\psi(1)=1$. Then, as $\bm x\to \bm v$ and $\bm y\to \bm v$,
$$ y_k\psi(y_k/x_k)=y_k+\gamma y_k(y_k/x_k-1)+o(\| x_k- y_k\|)=y_k+\gamma (y_k-x_k)+o(\|\bm x-\bm y\|),  $$
$$\sum_{k=1}^Ky_k\psi(y_k/x_k)=1+o(\|\bm x-\bm y\|),  $$
\begin{equation} \label{eq:approxforgk} g_k(\bm x,\bm y)=y_k-\gamma (x_k-y_k)+o(\|\bm x-\bm y\|).
\end{equation}
Hence, Condition (C1) is satisfied with $\lambda_0=-\gamma\le 0$.  By Theorem \ref{Th2append},
$$\bm N_n-n\bm v=O(\sqrt{n\log\log n}) \; a.s. \text{ and } n(\widehat{\bm \rho}_n-\bm v)=O(\sqrt{n\log\log n}) \; a.s., $$
which implies
$$p_{m,k}=g_k\left(\frac{\bm N_m}{n},\widehat{\bm\rho}_m\right)=v_k+O\left(\sqrt{\frac{\log\log n}{n}}\right)\; a.s. $$
Then $  \Delta \bm M_n=\bm X_n-\bm p_{n-1}$ is a sequence of martingale differences with
$$ \ep[(\Delta \bm M_n)^{\prime}\Delta \bm M_n|\mathcal{F}_{n-1}]=diag(\bm p_{n-1})-(\bm p_{n-1})^{\prime}\bm p_{n-1}=\bm \Sigma_1+O\left(\sqrt{\frac{\log\log n}{n}}\right) a.s. $$
 On the other hand, it is obvious that
$$ \ep[\Delta M_{m,k}\Delta Q_{m,li}|\mathscr{F}_{m-1}]=0 \;\; \text{
for all }\;\; i \text{ and }\;\; k,l. $$
By Lemma \ref{lem3}, we can define a $(K\times d)$-dimensional
 standard Brownian motion $\bm B(t)$  and a $K$-dimensional standard Brownian motion $\bm b(t)$ such that they are independent and \eqref{eq:approxforQ},
\begin{equation}\label{eq:approxforM} \bm M_n =\bm b(n)
\bm \Sigma_1^{1/2}+o(n^{1/2-\tau})
\end{equation}
hold for some $\tau>0$. Then \eqref{eqThappend2.3} and \eqref{eqThappend2.4} hold. Write $a_n=\max_{i\le n}\|\bm N_i-i\widehat{\bm \rho}_i\|/i^{1/2-\alpha}$. By \eqref{eq:approxforgk}, we have
$$ X_{n,k}=\Delta M_{n,k}+ \widehat{\rho}_{n-1,k}-\gamma\left(\frac{N_{n-1,k}}{n-1}-\widehat{\rho}_{n-1,k}\right)+\frac{o(a_{n-1})}{(n-1)^{1/2+\alpha}}\; a.s. $$
Let $\bm G_t$ be a solution of the equation
\begin{equation}\label{eq:PDE} \bm G_t=\bm b(t)\bm\Sigma_1^{1/2}+(\gamma+1)\int_0^t\frac{\bm B(s)\bm\Sigma^{1/2}}{s}dx-\gamma\int_0^t\frac{\bm G_s}{s}ds,\;\; \bm G_0=0,
\end{equation}
 where $\bm\Sigma$ is defended as in \eqref{eqLBappend}.  Write $\widetilde{a}_m=\sup_{i\le m}\|\bm B(i)\|/i^{1/2-\alpha}$. Then
 \begin{align*}
  & \bm N_n-n\bm v=    \bm M_n-\gamma\sum_{m=1}^{n-1}\frac{\bm N_m-m\bm v}{m}+(\gamma+1)\sum_{m=1}^{n-1}\big(\widehat{\bm \rho}_m-\bm v)+\sum_{m=1}^{n-1}\frac{o(a_m)}{m^{1/2+\alpha}} \; a.s.\\
=& \bm b(n)\bm\Sigma_1^{1/2}+o(n^{1/2-\tau})-\gamma\sum_{m=1}^{n-1}\frac{\bm N_m-m\bm v}{m} \\
 & +(\gamma+1)\sum_{m=1}^{n-1}\frac{\bm B(m)\bm\Sigma^{1/2}+o(\|\bm B(m)\|)+o(m^{1/2})}{m}
   +\sum_{m=1}^{n-1}\frac{o(a_m)}{m^{1/2+\alpha}}\; a.s.\\
=& \bm G_n-\gamma\sum_{m=1}^{n-1}\frac{\bm N_m-m\bm v-\bm G_m}{m}+\sum_{m=1}^{n-1}\frac{o(a_m+\widetilde{a}_m)}{m^{1/2+\alpha}}+o(n^{1/2})\; a.s.
 \end{align*}
 That is
 $$
   \bm N_n-n\bm v-\bm G_n= -\gamma\sum_{m=1}^{n-1}\frac{\bm N_m-m\bm v-\bm G_m}{m}+ o(a_n+\widetilde{a}_n)n^{1/2-\alpha}+o(n^{1/2})\; a.s.
$$
By Lemma \ref{lem1},
 \begin{align*}
 \bm N_n-n\bm v-\bm G_n =& \sum_{m=1}^n \frac{o(a_m+\widetilde{a}_m)m^{1/2-\alpha}+o(m^{1/2})}{m}\big(\frac{n}{m}\big)^{-\gamma}\\
 =&   o(a_n+\widetilde{a}_n)n^{1/2-\alpha}+o(n^{1/2}) \;\; a.s.
 \end{align*}
 It follows that
 $$ \max_{m\le n}\|\bm N_m-m\bm v-\bm G_m\|=o(a_n+\widetilde{a}_n)n^{1/2-\alpha}+o(n^{1/2}) \;\; a.s.  $$
 By \eqref{eq:boundofB(m)} and $\eqref{eq:boundofN(m)}$, $a_n+\widetilde{a}_n=O_P(n^{\alpha})$.  We conclude that
 $$ \max_{m\le n}\|\bm N_n-n\bm v-\bm G_n\|=o_P(\sqrt{n}). $$
 Hence \eqref{eq:likeDBCD} is proved by noting that
 $$\bm G_t=t^{-\gamma}\int_0^t x^{\gamma} d \bm b(x)\bm\Sigma_1^{1/2}+t^{-\gamma}\int_0^t x^{\gamma-1} \bm B(x)\bm\Sigma^{1/2}dx $$
 is the solution of \eqref{eq:PDE} and is  a Gaussian process with
 $ Var(\bm G_t)=t \left(\frac{1}{1+2\gamma}\bm\Sigma_1+\frac{2(1+\gamma)}{1+2\gamma}\bm\Sigma\right)$. $\Box$

\section{Simulation study}

In this section, we conduct a numerical study to compare the performance of the efficient doubly
adaptive biased coin design (Procedure B, EDBCD)  proposed in this article.
For clinical trials with two treatments, the compared procedures to the EDBCD  is the Doubly
Adaptive Biased Coin Design (DBCD) by \cite{HZ04}, and the efficient randomized-adaptive designs (ERADE) by \cite{HZH09}. For clinical trials with three treatments, since the ERADE of Hu, Zhang and He (2009) is limited to two-treatment trials, we consider its generalization defined as in Example 3.1 and  refer it to GERADE. The compared procedures to the  EDBCD   is    DBCD and GERADE.   We consider the clinical trials with dichotomic responses $\xi_{m,k}\sim binomial(1,p_k)$. The estimator of $p_k$ is given by
$$\widehat{p}_{m,k}=\frac{\sum_{i=1}^m X_{i,k}\xi_{i,k}+0.5}{\sum_{i=1}^n   X_{i,k}+1}. $$
Throughout our simulations, we used the  the   optimal proportions $\sqrt{p_k}/\sum_j\sqrt{p_j}$ proposed by Rosenberger et al. (2001) as   target proportions, and simulate   the allocation proportion ($\rho_k$), the  variability  ($\sigma^2$) of the normalized allocation proportions $N_{n,k}/\sqrt{n}$, the proportion of corrected  guesses (SB), then entropy (Ent) of the allocations.  We utilized the  set of parameters similar to those  described by \cite{HR03} and report
our findings in Tables \ref{tab:optimal-2treatment-1}-\ref{tab:optimal-3treatment-2}.
For the two-treatment case, DBCD, ERADE and EDBCD all converged well to the target allocation
proportion, but  ERADE and   EDBCD performed  better than DBCD with much smaller variability. The degree of randomness of the EDBCD is between  those of DBCD and ERADE  in most circumstances.
For the three-treatment case, DBCD, GERADE and EDBCD performed very similarly, but   EDBCD and GERADE have smaller variability.  We also consider the urn proportions $(1/q_k)/\sum_j(1/q_j)$ as   target proportions, where $q_j=1-p_j$, and have similar findings reported in Tables \ref{tab:urn-2treatment-1}-\ref{tab:urn-3treatment-2}.

\begin{table}[ht]
\centering
\caption{Simulated allocation proportion ($N_{n,1}/n$), selection bias (SB) and entropy (Ent)  of three
designs  with allocation target $\rho_1=\sqrt{p_1}/(\sqrt{p_1}+\sqrt{p_2})$, sample size=200, and replication =1000, in a 2-treatment trial.}
\label{tab:optimal-2treatment-1}
\medskip
  \tiny
\begin{tabular}{|c|ccc |ccc|ccc|ccc|}
\hline
Response & \multicolumn{3}{c|}{Asymptotic} & \multicolumn{3}{c|}{DBCD $\gamma=2$} & \multicolumn{3}{c|}{ERADE $\alpha=2/3$}&  \multicolumn{3}{c|}{EDBCD $\gamma=2$}\\
  $p_1$, $p_2$  & $\rho_1(\sigma^2)$ & SB & Ent & $\rho_1(\sigma^2)$ & SB & Ent & $\rho_1(\sigma^2)$ & SB & Ent & $\rho_1(\sigma^2)$ & SB & Ent \\
  \hline
0.9,	0.9	&	0.50(0.01) 	&	0.50 	&	0.69 	&	0.50 	(0.06) &	0.54 	&	0.68 	&	0.50 	(0.01) &	0.60 	&	0.66 	&	0.50 	(0.02) &	0.58 	&	0.67 	\\
0.9,	0.7	&	0.53(0.02) 	&	0.53 	&	0.69 	&	0.53 	(0.07) &	0.56 	&	0.68 	&	0.53 	(0.02) &	0.61 	&	0.65 	&	0.53 	(0.04) &	0.59 	&	0.66 	\\
0.9,	0.6	&	0.55(0.03) 	&	0.55 	&	0.69 	&	0.55 	(0.08) &	0.57 	&	0.67 	&	0.55 	(0.03) &	0.62 	&	0.65 	&	0.55 	(0.04) &	0.59 	&	0.66 	\\
0.9,	0.5	&	0.57(0.04) 	&	0.57 	&	0.68 	&	0.57 	(0.10) &	0.59 	&	0.67 	&	0.57 	(0.05) &	0.63 	&	0.64 	&	0.57 	(0.06) &	0.60 	&	0.65 	\\
0.9,	0.3	&	0.63(0.09) 	&	0.63 	&	0.66 	&	0.63 	(0.17) &	0.64 	&	0.63 	&	0.63 	(0.12) &	0.67 	&	0.61 	&	0.63 	(0.12) &	0.65 	&	0.62 	\\
0.8,	0.8	&	0.50(0.02) 	&	0.50 	&	0.69 	&	0.50 	(0.07) &	0.55 	&	0.68 	&	0.50 	(0.02) &	0.60 	&	0.65 	&	0.50 	(0.04) &	0.58 	&	0.67 	\\
0.8,	0.7	&	0.52(0.02) 	&	0.52 	&	0.69 	&	0.51 	(0.08) &	0.55 	&	0.68 	&	0.51 	(0.03) &	0.61 	&	0.65 	&	0.51 	(0.04) &	0.58 	&	0.67 	\\
0.8,	0.6	&	0.54(0.03) 	&	0.54 	&	0.69 	&	0.53 	(0.08) &	0.56 	&	0.68 	&	0.53 	(0.04) &	0.61 	&	0.65 	&	0.53 	(0.05) &	0.59 	&	0.66 	\\
0.7,	0.5	&	0.54(0.05) 	&	0.54 	&	0.69 	&	0.54 	(0.11) &	0.57 	&	0.67 	&	0.54 	(0.05) &	0.62 	&	0.65 	&	0.54 	(0.07) &	0.59 	&	0.66 	\\
0.7,	0.3	&	0.60(0.09) 	&	0.60 	&	0.67 	&	0.60 	(0.18) &	0.61 	&	0.65 	&	0.60 	(0.12) &	0.65 	&	0.63 	&	0.60 	(0.13) &	0.63 	&	0.63 	\\
0.6,	0.4	&	0.55(0.07) 	&	0.55 	&	0.69 	&	0.55 	(0.13) &	0.57 	&	0.67 	&	0.55 	(0.08) &	0.63 	&	0.64 	&	0.55 	(0.10) &	0.60 	&	0.66 	\\
0.5,	0.5	&	0.50(0.06) 	&	0.50 	&	0.69 	&	0.50 	(0.12) &	0.56 	&	0.67 	&	0.50 	(0.07) &	0.62 	&	0.65 	&	0.50 	(0.09) &	0.59 	&	0.66 	\\
0.5,	0.2	&	0.61(0.17) 	&	0.61 	&	0.67 	&	0.61 	(0.32) &	0.63 	&	0.64 	&	0.61 	(0.25) &	0.67 	&	0.62 	&	0.61 	(0.23) &	0.64 	&	0.63 	\\
0.4,    0.3	&	0.54(0.12) 	&	0.54 	&	0.69    &   0.53 	(0.21) &	0.57 	&	0.67 	&	0.53 	(0.13) &	0.63 	&	0.64 	&	0.53 	(0.16) &	0.60 	&	0.65  	\\
0.2,	0.2	&	0.50(0.25) 	&	0.50 	&	0.69 	&	0.50 	(0.41) &	0.58 	&	0.66 	&	0.50 	(0.32) &	0.64 	&	0.64 	&	0.50 	(0.32) &	0.62 	&	0.64 	\\
\hline
Response & \multicolumn{3}{c|}{Asymptotic} & \multicolumn{3}{c|}{DBCD $\gamma=4$} & \multicolumn{3}{c|}{ERADE $\alpha=1/2$}&  \multicolumn{3}{c|}{EDBCD $\gamma=4$}\\
  $p_1$, $p_2$  & $\rho_1(\sigma^2)$ & SB & Ent & $\rho_1(\sigma^2)$ & SB & Ent & $\rho_1(\sigma^2)$ & SB & Ent & $\rho_1(\sigma^2)$ & SB & Ent \\
  \hline
0.9,	0.9	&	0.50(0.01) 	&	0.50 	&	0.69 	&	0.50 	(0.04) &	0.57 	&	0.67 	&	0.50 	(0.01) &	0.63 	&	0.62 	&	0.50 	(0.02) &	0.60 	&	0.65 	\\
0.9,	0.7	&	0.53(0.02) 	&	0.53 	&	0.69 	&	0.53 	(0.04) &	0.57 	&	0.67 	&	0.53 	(0.02) &	0.64 	&	0.62 	&	0.53 	(0.03) &	0.61 	&	0.65 	\\
0.9,	0.6	&	0.55(0.03) 	&	0.55 	&	0.69 	&	0.55 	(0.06) &	0.58 	&	0.66 	&	0.55 	(0.03) &	0.66 	&	0.61 	&	0.55 	(0.04) &	0.61 	&	0.65 	\\
0.9,	0.5	&	0.57(0.04) 	&	0.57 	&	0.68 	&	0.57 	(0.07) &	0.60 	&	0.65 	&	0.57 	(0.04) &	0.67 	&	0.61 	&	0.57 	(0.06) &	0.62 	&	0.64 	\\
0.9,	0.3	&	0.63(0.09) 	&	0.63 	&	0.66 	&	0.63 	(0.14) &	0.65 	&	0.62 	&	0.63 	(0.10) &	0.71 	&	0.57 	&	0.63 	(0.12) &	0.66 	&	0.61 	\\
0.8,	0.8	&	0.50(0.02) 	&	0.50 	&	0.69 	&	0.50 	(0.04) &	0.57 	&	0.67 	&	0.50 	(0.02) &	0.63 	&	0.62 	&	0.50 	(0.03) &	0.60 	&	0.65 	\\
0.8,	0.7	&	0.52(0.02) 	&	0.52 	&	0.69 	&	0.51 	(0.06) &	0.57 	&	0.67 	&	0.51 	(0.02) &	0.64 	&	0.62 	&	0.51 	(0.03) &	0.60 	&	0.65 	\\
0.8,	0.6	&	0.54(0.03) 	&	0.54 	&	0.69 	&	0.53 	(0.06) &	0.58 	&	0.66 	&	0.53 	(0.03) &	0.65 	&	0.62 	&	0.53 	(0.04) &	0.61 	&	0.65 	\\
0.7,	0.5	&	0.54(0.05) 	&	0.54 	&	0.69 	&	0.54 	(0.08) &	0.58 	&	0.66 	&	0.54 	(0.05) &	0.65 	&	0.61 	&	0.54 	(0.06) &	0.62 	&	0.64 	\\
0.7,	0.3	&	0.60(0.09) 	&	0.60 	&	0.67 	&	0.60 	(0.14) &	0.63 	&	0.63 	&	0.60 	(0.12) &	0.70 	&	0.59 	&	0.60 	(0.13) &	0.65 	&	0.62 	\\
0.6,	0.4	&	0.55(0.07) 	&	0.55 	&	0.69 	&	0.55 	(0.11) &	0.59 	&	0.66 	&	0.55 	(0.07) &	0.66 	&	0.61 	&	0.55 	(0.09) &	0.62 	&	0.64 	\\
0.5,	0.5	&	0.50(0.06) 	&	0.50 	&	0.69 	&	0.50 	(0.10) &	0.58 	&	0.66 	&	0.50 	(0.07) &	0.65 	&	0.61 	&	0.50 	(0.08) &	0.61 	&	0.65 	\\
0.5,	0.2	&	0.61(0.17) 	&	0.61 	&	0.67 	&	0.61 	(0.23) &	0.64 	&	0.62 	&	0.61 	(0.22) &	0.71 	&	0.58 	&	0.61 	(0.23) &	0.66 	&	0.60 	\\
0.4,	0.3	&	0.54(0.12) 	&	0.54 	&	0.69 	&	0.53 	(0.16) &	0.59 	&	0.65 	&	0.53 	(0.13) &	0.67 	&	0.60 	&	0.53 	(0.15) &	0.63 	&	0.63 	\\
0.2,	0.2	&	0.50(0.25) 	&	0.50 	&	0.69 	&	0.50 	(0.37) &	0.61 	&	0.64 	&	0.50 	(0.35) &	0.69 	&	0.59 	&	0.50 	(0.33) &	0.64 	&	0.62 	\\
\hline
\end{tabular}
\end{table}

\begin{table}[ht]
\centering
\caption{Simulated allocation proportion ($N_{n,1}/n$), selection bias (SB) and entropy (Ent)     of three
designs  with allocation targets $\rho_k=\sqrt{p_k}/\sum_j\sqrt{p_j}$, sample size=200, and replication =1000, in a 3-treatment trial.}
\label{tab:optimal-3treatment-1}
\medskip

\tiny

\begin{tabular}{| c | ccccc | ccccc |}
\hline
Responses & \multicolumn{5}{c|}{Asymptotic} & \multicolumn{5}{c|}{DBCD $\gamma=2$} \\
  $p_1$, $p_2$, $p_3$  & $\rho_1(\sigma^2)$ & $\rho_2(\sigma^2)$ & $\rho_3(\sigma^2)$ & SB & Ent & $\rho_1(\sigma^2)$ & $\rho_2(\sigma^2)$ & $\rho_3(\sigma^2)$ & SB & Ent   \\
   \hline
0.9,	0.9,	0.9	&	0.33 	(0.01) &	0.33 	(0.01) &	0.33 	(0.01) &	0.33 	&	1.10 	&	0.33 	(0.05) &	0.33 	(0.05) &	0.33 	(0.05) &	0.39 	&	1.07 	 	\\
0.9,	0.9,	0.7	&	0.35 	(0.01) &	0.35 	(0.01) &	0.31 	(0.02) &	0.35 	&	1.10 	&	0.35 	(0.06) &	0.35 	(0.05) &	0.30 	(0.06) &	0.40 	&	1.07 		\\
0.9,	0.8,	0.6	&	0.36 	(0.01) &	0.34 	(0.02) &	0.30 	(0.03) &	0.36 	&	1.10 	&	0.36 	(0.06) &	0.34 	(0.07) &	0.29 	(0.08) &	0.40 	&	1.07 		\\
0.9,	0.7,	0.5	&	0.38 	(0.02) &	0.34 	(0.03) &	0.28 	(0.04) &	0.38 	&	1.09 	&	0.38 	(0.07) &	0.33 	(0.08) &	0.28 	(0.10) &	0.41 	&	1.06 		\\
0.9,	0.5,	0.3	&	0.43 	(0.05) &	0.32 	(0.05) &	0.25 	(0.09) &	0.43 	&	1.07 	&	0.43 	(0.11) &	0.32 	(0.12) &	0.25 	(0.16) &	0.45 	&	1.04 		\\
0.8,	0.8,	0.8	&	0.33 	(0.01) &	0.33 	(0.01) &	0.33 	(0.01) &	0.33 	&	1.10 	&	0.33 	(0.07) &	0.33 	(0.06) &	0.33 	(0.06) &	0.39 	&	1.07 		\\
0.8,	0.7,	0.6	&	0.36 	(0.02) &	0.33 	(0.02) &	0.31 	(0.03) &	0.36 	&	1.10 	&	0.36 	(0.07) &	0.33 	(0.08) &	0.31 	(0.08) &	0.40 	&	1.07 		\\
0.8,	0.6,	0.5	&	0.38 	(0.03) &	0.33 	(0.04) &	0.30 	(0.04) &	0.38 	&	1.09 	&	0.37 	(0.09) &	0.32 	(0.09) &	0.30 	(0.11) &	0.41 	&	1.06 		\\
0.7,	0.5,	0.4	&	0.38 	(0.04) &	0.32 	(0.05) &	0.29 	(0.07) &	0.38 	&	1.09 	&	0.38 	(0.11) &	0.32 	(0.11) &	0.29 	(0.12) &	0.42 	&	1.06 		\\
0.6,	0.5,	0.5	&	0.35 	(0.04) &	0.32 	(0.05) &	0.32 	(0.05) &	0.35 	&	1.10 	&	0.35 	(0.10) &	0.32 	(0.11) &	0.32 	(0.11) &	0.41 	&	1.06 		\\
0.6,	0.5,	0.3	&	0.38 	(0.06) &	0.35 	(0.06) &	0.27 	(0.09) &	0.38 	&	1.09 	&	0.38 	(0.12) &	0.35 	(0.14) &	0.27 	(0.17) &	0.43 	&	1.05 		\\
0.5,	0.5,	0.3	&	0.36 	(0.07) &	0.36 	(0.07) &	0.28 	(0.10) &	0.36 	&	1.09 	&	0.36 	(0.14) &	0.36 	(0.14) &	0.28 	(0.19) &	0.42 	&	1.05 		\\
0.4,	0.3,	0.2	&	0.39 	(0.13) &	0.34 	(0.13) &	0.27 	(0.17) &	0.39 	&	1.09 	&	0.39 	(0.22) &	0.33 	(0.21) &	0.27 	(0.28) &	0.44 	&	1.04 		\\
0.2,	0.2,	0.2	&	0.33 	(0.22) &	0.33 	(0.22) &	0.33 	(0.22) &	0.33 	&	1.10 	&	0.33 	(0.39) &	0.33 	(0.40) &	0.33 	(0.40) &	0.44 	&	1.04    	\\
\hline
 Responses &  \multicolumn{5}{c|}{GERADE $\alpha=2/3$} &  \multicolumn{5}{c|}{EDBCD $\gamma=2$}   \\
  $p_1$, $p_2$, $p_3$  & $\rho_1(\sigma^2)$ & $\rho_2(\sigma^2)$ & $\rho_3(\sigma^2)$ & SB & Ent & $\rho_1(\sigma^2)$ & $\rho_2(\sigma^2)$ & $\rho_3(\sigma^2)$ & SB & Ent  \\
   \hline
0.9,	0.9,	0.9	&	0.33 	(0.02) &	0.33 	(0.02) &	0.33 	(0.02) &	0.43 	&	1.05 	&	0.33 	(0.03) &	0.33 	(0.03) &	0.33 	(0.03) &	0.42 	&	1.06  	 	\\
0.9,	0.9,	0.7	&	0.34 	(0.02) &	0.35 	(0.02) &	0.31 	(0.03) &	0.44 	&	1.04 	&	0.35 	(0.03) &	0.35 	(0.03) &	0.30 	(0.04) &	0.43 	&	1.06  	 	\\
0.9,	0.8,	0.6	&	0.36 	(0.02) &	0.34 	(0.03) &	0.30 	(0.04) &	0.45 	&	1.04 	&	0.36 	(0.04) &	0.34 	(0.04) &	0.29 	(0.05) &	0.43 	&	1.05  	 	\\
0.9,	0.7,	0.5	&	0.38 	(0.03) &	0.33 	(0.04) &	0.28 	(0.06) &	0.46 	&	1.04 	&	0.38 	(0.04) &	0.33 	(0.05) &	0.28 	(0.07) &	0.44 	&	1.05  	 	\\
0.9,	0.5,	0.3	&	0.43 	(0.07) &	0.32 	(0.07) &	0.25 	(0.12) &	0.49 	&	1.01 	&	0.43 	(0.08) &	0.32 	(0.09) &	0.25 	(0.13) &	0.47 	&	1.02  	 	\\
0.8,	0.8,	0.8	&	0.33 	(0.03) &	0.33 	(0.02) &	0.33 	(0.02) &	0.43 	&	1.05 	&	0.33 	(0.03) &	0.33 	(0.03) &	0.33 	(0.04) &	0.42 	&	1.06  	 	\\
0.8,	0.7,	0.6	&	0.35 	(0.03) &	0.33 	(0.04) &	0.31 	(0.05) &	0.44 	&	1.04 	&	0.36 	(0.04) &	0.33 	(0.05) &	0.31 	(0.05) &	0.43 	&	1.05  	 	\\
0.8,	0.6,	0.5	&	0.37 	(0.04) &	0.32 	(0.05) &	0.30 	(0.06) &	0.45 	&	1.04 	&	0.37 	(0.05) &	0.32 	(0.05) &	0.30 	(0.07) &	0.44 	&	1.05  	 	\\
0.7,	0.5,	0.4	&	0.38 	(0.06) &	0.32 	(0.07) &	0.29 	(0.08) &	0.46 	&	1.03 	&	0.38 	(0.07) &	0.32 	(0.08) &	0.29 	(0.11) &	0.44 	&	1.04  	 	\\
0.6,	0.5,	0.5	&	0.35 	(0.06) &	0.32 	(0.06) &	0.32 	(0.07) &	0.45 	&	1.04 	&	0.35 	(0.07) &	0.32 	(0.07) &	0.32 	(0.08) &	0.44 	&	1.05  	 	\\
0.6,	0.5,	0.3	&	0.38 	(0.08) &	0.35 	(0.09) &	0.27 	(0.14) &	0.47 	&	1.03 	&	0.38 	(0.10) &	0.35 	(0.09) &	0.27 	(0.15) &	0.45 	&	1.03  	 	\\
0.5,	0.5,	0.3	&	0.36 	(0.09) &	0.36 	(0.10) &	0.28 	(0.13) &	0.46 	&	1.03 	&	0.36 	(0.11) &	0.36 	(0.10) &	0.28 	(0.15) &	0.45 	&	1.03  	 	\\
0.4,	0.3,	0.2	&	0.39 	(0.15) &	0.34 	(0.16) &	0.27 	(0.22) &	0.48 	&	1.02 	&	0.39 	(0.17) &	0.34 	(0.20) &	0.27 	(0.28) &	0.46 	&	1.02  	 	\\
0.2,	0.2,	0.2	&	0.33 	(0.33) &	0.33 	(0.33) &	0.33 	(0.35) &	0.48 	&	1.02 	&	0.33 	(0.37) &	0.33 	(0.36) &	0.33 	(0.35) &	0.47 	&	1.02 	 	\\
\hline
\end{tabular}
\end{table}

\begin{table}[ht]
\centering
\caption{Simulated allocation proportion ($N_{n,1}/n$), selection bias (SB) and entropy (Ent)     of three
designs  with allocation targets $\rho_k=\sqrt{p_k}/\sum_j\sqrt{p_j}$, sample size=200, and replication =1000, in a 3-treatment trial.}
\label{tab:optimal-3treatment-2}
\medskip

\tiny

\begin{tabular}{| c |ccccc|ccccc|}
\hline
Responses & \multicolumn{5}{c|}{Asymptotic} & \multicolumn{5}{c|}{DBCD $\gamma=4$} \\
  $p_1$, $p_2$, $p_3$  & $\rho_1(\sigma^2)$ & $\rho_2(\sigma^2)$ & $\rho_3(\sigma^2)$ & SB & Ent & $\rho_1(\sigma^2)$ & $\rho_2(\sigma^2)$ & $\rho_3(\sigma^2)$ & SB & Ent   \\
   \hline
0.9,	0.9,	0.9	&	0.33 	(0.01) &	0.33 	(0.01) &	0.33 	(0.01) &	0.33 	&	1.10 	&	0.33 	(0.03) &	0.33 	(0.03) &	0.33 	(0.03) &	0.42 	&	1.05  	\\
0.9,	0.9,	0.7	&	0.35 	(0.01) &	0.35 	(0.01) &	0.31 	(0.02) &	0.35 	&	1.10 	&	0.35 	(0.03) &	0.35 	(0.04) &	0.30 	(0.04) &	0.43 	&	1.05 	\\
0.9,	0.8,	0.6	&	0.36 	(0.01) &	0.34 	(0.02) &	0.30 	(0.03) &	0.36 	&	1.10 	&	0.36 	(0.04) &	0.34 	(0.04) &	0.29 	(0.05) &	0.43 	&	1.05 	\\
0.9,	0.7,	0.5	&	0.38 	(0.02) &	0.34 	(0.03) &	0.28 	(0.04) &	0.38 	&	1.09 	&	0.38 	(0.05) &	0.33 	(0.05) &	0.28 	(0.08) &	0.44 	&	1.04 	\\
0.9,	0.5,	0.3	&	0.43 	(0.05) &	0.32 	(0.05) &	0.25 	(0.09) &	0.43 	&	1.07 	&	0.43 	(0.09) &	0.32 	(0.09) &	0.25 	(0.14) &	0.47 	&	1.01 	\\
0.8,	0.8,	0.8	&	0.33 	(0.01) &	0.33 	(0.01) &	0.33 	(0.01) &	0.33 	&	1.10 	&	0.33 	(0.04) &	0.33 	(0.04) &	0.33 	(0.04) &	0.42 	&	1.05 	\\
0.8,	0.7,	0.6	&	0.36 	(0.02) &	0.33 	(0.02) &	0.31 	(0.03) &	0.36 	&	1.10 	&	0.36 	(0.05) &	0.33 	(0.05) &	0.31 	(0.06) &	0.43 	&	1.05 	\\
0.8,	0.6,	0.5	&	0.38 	(0.03) &	0.33 	(0.04) &	0.30 	(0.04) &	0.38 	&	1.09 	&	0.37 	(0.05) &	0.32 	(0.07) &	0.30 	(0.07) &	0.44 	&	1.04 	\\
0.7,	0.5,	0.4	&	0.38 	(0.04) &	0.32 	(0.05) &	0.29 	(0.07) &	0.38 	&	1.09 	&	0.38 	(0.08) &	0.32 	(0.09) &	0.29 	(0.10) &	0.45 	&	1.03 	\\
0.6,	0.5,	0.5	&	0.35 	(0.04) &	0.32 	(0.05) &	0.32 	(0.05) &	0.35 	&	1.10 	&	0.35 	(0.08) &	0.32 	(0.08) &	0.32 	(0.08) &	0.44 	&	1.04 	\\
0.6,	0.5,	0.3	&	0.38 	(0.06) &	0.35 	(0.06) &	0.27 	(0.09) &	0.38 	&	1.09 	&	0.38 	(0.11) &	0.35 	(0.10) &	0.27 	(0.15) &	0.45 	&	1.03 	\\
0.5,	0.5,	0.3	&	0.36 	(0.07) &	0.36 	(0.07) &	0.28 	(0.10) &	0.36 	&	1.09 	&	0.36 	(0.12) &	0.36 	(0.11) &	0.28 	(0.14) &	0.45 	&	1.03 	\\
0.4,	0.3,	0.2	&	0.39 	(0.13) &	0.34 	(0.13) &	0.27 	(0.17) &	0.39 	&	1.09 	&	0.39 	(0.20) &	0.34 	(0.20) &	0.27 	(0.27) &	0.47 	&	1.01 	\\
0.2,	0.2,	0.2	&	0.33 	(0.22) &	0.33 	(0.22) &	0.33 	(0.22) &	0.33 	&	1.10 	&	0.33 	(0.36) &	0.33 	(0.36) &	0.33 	(0.36) &	0.48 	&	1.00   	\\
\hline
Responses  &  \multicolumn{5}{c|}{GERADE $\alpha=1/2$} &  \multicolumn{5}{c|}{EDBCD $\gamma=4$}   \\
  $p_1$, $p_2$, $p_3$  & $\rho_1(\sigma^2)$ & $\rho_2(\sigma^2)$ & $\rho_3(\sigma^2)$ & SB & Ent & $\rho_1(\sigma^2)$ & $\rho_2(\sigma^2)$ & $\rho_3(\sigma^2)$ & SB & Ent  \\
   \hline
0.9,	0.9,	0.9	&	0.33 	(0.01)&	0.33 	(0.01) &	0.33 	(0.01) &	0.46 	&	1.00 	&	0.33 	(0.02) &	0.33 	(0.02) &	0.33 	(0.02) &	0.45 	&	1.03   	\\
0.9,	0.9,	0.7	&	0.35 	(0.02)&	0.35 	(0.01) &	0.30 	(0.03) &	0.47 	&	0.99 	&	0.34 	(0.02) &	0.35 	(0.02) &	0.31 	(0.03) &	0.45 	&	1.03   	\\
0.9,	0.8,	0.6	&	0.36 	(0.02)&	0.34 	(0.02) &	0.29 	(0.04) &	0.48 	&	0.99 	&	0.36 	(0.03) &	0.34 	(0.03) &	0.29 	(0.04) &	0.46 	&	1.03   	\\
0.9,	0.7,	0.5	&	0.38 	(0.02)&	0.33 	(0.03) &	0.28 	(0.05) &	0.49 	&	0.98 	&	0.38 	(0.03) &	0.33 	(0.04) &	0.28 	(0.05) &	0.46 	&	1.02   	\\
0.9,	0.5,	0.3	&	0.43 	(0.06)&	0.32 	(0.06) &	0.25 	(0.12) &	0.53 	&	0.96 	&	0.43 	(0.07) &	0.32 	(0.07) &	0.25 	(0.13) &	0.49 	&	0.99   	\\
0.8,	0.8,	0.8	&	0.33 	(0.02)&	0.33 	(0.02) &	0.33 	(0.02) &	0.46 	&	1.00 	&	0.33 	(0.03) &	0.33 	(0.03) &	0.33 	(0.03) &	0.45 	&	1.03   	\\
0.8,	0.7,	0.6	&	0.36 	(0.03)&	0.33 	(0.03) &	0.31 	(0.04) &	0.48 	&	0.99 	&	0.36 	(0.03) &	0.33 	(0.04) &	0.31 	(0.04) &	0.46 	&	1.03   	\\
0.8,	0.6,	0.5	&	0.37 	(0.03)&	0.32 	(0.04) &	0.30 	(0.05) &	0.49 	&	0.99 	&	0.38 	(0.04) &	0.32 	(0.05) &	0.30 	(0.06) &	0.46 	&	1.02   	\\
0.7,	0.5,	0.4	&	0.38 	(0.05)&	0.32 	(0.06) &	0.29 	(0.08) &	0.50 	&	0.98 	&	0.38 	(0.06) &	0.32 	(0.07) &	0.29 	(0.09) &	0.47 	&	1.01   	\\
0.6,	0.5,	0.5	&	0.35 	(0.06)&	0.32 	(0.06) &	0.32 	(0.07) &	0.48 	&	0.99 	&	0.35 	(0.06) &	0.32 	(0.07) &	0.32 	(0.07) &	0.46 	&	1.02   	\\
0.6,	0.5,	0.3	&	0.38 	(0.07)&	0.35 	(0.08) &	0.27 	(0.14) &	0.51 	&	0.97 	&	0.38 	(0.08) &	0.35 	(0.09) &	0.27 	(0.15) &	0.48 	&	1.00   	\\
0.5,	0.5,	0.3	&	0.36 	(0.09)&	0.36 	(0.07) &	0.28 	(0.12) &	0.50 	&	0.97 	&	0.36 	(0.09) &	0.36 	(0.10) &	0.28 	(0.13) &	0.48 	&	1.01   	\\
0.4,	0.3,	0.2	&	0.39 	(0.16)&	0.34 	(0.18) &	0.27 	(0.27) &	0.52 	&	0.96 	&	0.39 	(0.16) &	0.34 	(0.18) &	0.27 	(0.23) &	0.49 	&	0.99   	\\
0.2,	0.2,	0.2	&	0.33 	(0.31)&	0.33 	(0.30) &	0.33 	(0.31) &	0.53 	&	0.95 	&	0.33 	(0.32) &	0.33 	(0.35) &	0.33 	(0.33) &	0.50 	&	0.98 	\\
\hline
\end{tabular}
\end{table}

\begin{table}[ht]
\centering
\caption{Simulated allocation proportion ($N_{n,1}/n$), selection bias (SB) and entropy (Ent)  of three
designs  with allocation target $\rho_1=q_2/(q_1+q_2)$, sample size=200, and replication =1000, in a 2-treatment trial.}
\label{tab:urn-2treatment-1}
\medskip
\tiny
\begin{tabular}{|c|ccc |ccc|ccc|ccc|}
\hline
Response & \multicolumn{3}{c|}{Asymptotic} & \multicolumn{3}{c|}{DBCD $\gamma=2$} & \multicolumn{3}{c|}{ERADE $\alpha=2/3$}&  \multicolumn{3}{c|}{EDBCD $\gamma=2$}\\
  $p_1$, $p_2$  & $\rho_1(\sigma^2)$ & SB & Ent & $\rho_1(\sigma^2)$ & SB & Ent & $\rho_1(\sigma^2)$ & SB & Ent & $\rho_1(\sigma^2)$ & SB & Ent \\
  \hline
0.9,	0.9	&	0.50(2.25) 	&	0.50 	&	0.69 	&	0.50 	(2.70) &	0.67 	&	0.58 	&	0.50 	(2.07) &	0.68 	&	0.59 	&	0.49 	(2.29) &	0.70 	&	0.56 	\\
0.9,	0.7	&	0.75(0.75) 	&	0.75 	&	0.56 	&	0.74 	(0.91) &	0.76 	&	0.50 	&	0.74 	(0.84) &	0.76 	&	0.50 	&	0.74 	(0.88) &	0.76 	&	0.49 	\\
0.9,	0.6	&	0.80(0.48) 	&	0.80 	&	0.50 	&	0.79 	(0.63) &	0.80 	&	0.46 	&	0.78 	(0.53) &	0.79 	&	0.46 	&	0.79 	(0.56) &	0.80 	&	0.44 	\\
0.9,	0.5	&	0.83(0.32) 	&	0.83 	&	0.45 	&	0.82 	(0.39) &	0.83 	&	0.42 	&	0.82 	(0.35) &	0.82 	&	0.42 	&	0.82 	(0.35) &	0.83 	&	0.40 	\\
0.9,	0.3	&	0.88(0.16) 	&	0.88 	&	0.38 	&	0.86 	(0.22) &	0.86 	&	0.36 	&	0.86 	(0.20) &	0.86 	&	0.36 	&	0.86 	(0.19) &	0.87 	&	0.35 	\\
0.8,	0.8	&	0.50(1.00) 	&	0.50 	&	0.69 	&	0.49 	(1.26) &	0.63 	&	0.62 	&	0.50 	(1.10) &	0.66 	&	0.62 	&	0.50 	(0.97) &	0.65 	&	0.61 	\\
0.8,	0.7	&	0.60(0.72) 	&	0.60 	&	0.67 	&	0.60 	(0.73) &	0.64 	&	0.62 	&	0.59 	(0.79) &	0.67 	&	0.61 	&	0.59 	(0.77) &	0.66 	&	0.60 	\\
0.8,	0.6	&	0.67(0.52) 	&	0.67 	&	0.64 	&	0.66 	(0.63) &	0.68 	&	0.59 	&	0.66 	(0.54) &	0.70 	&	0.58 	&	0.66 	(0.57) &	0.69 	&	0.57 	\\
0.7,	0.5	&	0.63(0.35) 	&	0.63 	&	0.66 	&	0.62 	(0.45) &	0.64 	&	0.63 	&	0.62 	(0.36) &	0.67 	&	0.61 	&	0.62 	(0.38) &	0.66 	&	0.61 	\\
0.7,	0.3	&	0.70(0.21) 	&	0.70 	&	0.61 	&	0.69 	(0.27) &	0.70 	&	0.59 	&	0.69 	(0.24) &	0.71 	&	0.56 	&	0.69 	(0.24) &	0.71 	&	0.57 	\\
0.6,	0.4	&	0.60(0.24) 	&	0.60 	&	0.67 	&	0.59 	(0.31) &	0.62 	&	0.64 	&	0.60 	(0.25) &	0.66 	&	0.62 	&	0.59 	(0.26) &	0.63 	&	0.63 	\\
0.5,	0.5	&	0.50(0.25) 	&	0.50 	&	0.69 	&	0.50 	(0.35) &	0.58 	&	0.66 	&	0.50 	(0.27) &	0.64 	&	0.64 	&	0.50 	(0.28) &	0.61 	&	0.65 	\\
0.5,	0.2	&	0.62(0.13) 	&	0.62 	&	0.67 	&	0.61 	(0.20) &	0.63 	&	0.64 	&	0.61 	(0.14) &	0.67 	&	0.62 	&	0.61 	(0.15) &	0.64 	&	0.63 	\\
0.4,	0.3	&	0.54(0.13) 	&	0.54 	&	0.69 	&	0.54 	(0.21) &	0.57 	&	0.67 	&	0.54 	(0.14) &	0.64 	&	0.64 	&	0.53 	(0.16) &	0.60 	&	0.65 	\\
0.2,	0.2	&	0.50(0.06) 	&	0.50 	&	0.69 	&	0.50 	(0.12) &	0.55 	&	0.67 	&	0.50 	(0.07) &	0.63 	&	0.64 	&	0.50 	(0.09) &	0.59 	&	0.66 	\\
\hline
Response & \multicolumn{3}{c|}{Asymptotic} & \multicolumn{3}{c|}{DBCD $\gamma=4$} & \multicolumn{3}{c|}{ERADE $\alpha=1/2$}&  \multicolumn{3}{c|}{EDBCD $\gamma=4$}\\
  $p_1$, $p_2$  & $\rho_1(\sigma^2)$ & SB & Ent & $\rho_1(\sigma^2)$ & SB & Ent & $\rho_1(\sigma^2)$ & SB & Ent & $\rho_1(\sigma^2)$ & SB & Ent \\
  \hline
0.9, 	0.9 	&	0.50(2.25) 	&	0.50 	&	0.69 	&	0.50 	(2.46) &	0.71 	&	0.54 	&	0.50 	(2.25) &	0.72 	&	0.55 	&	0.50 	(2.17) &	0.72 	&	0.51 	\\
0.9, 	0.7 	&	0.75(0.75) 	&	0.75 	&	0.56 	&	0.74 	(0.83) &	0.77 	&	0.48 	&	0.74 	(0.76) &	0.79 	&	0.47 	&	0.74 	(0.77) &	0.78 	&	0.46 \\
0.9, 	0.6 	&	0.80(0.48) 	&	0.80 	&	0.50 	&	0.79 	(0.54) &	0.81 	&	0.43 	&	0.79 	(0.48) &	0.82 	&	0.42 	&	0.79 	(0.50) &	0.81 	&	0.42 \\
0.9, 	0.5 	&	0.83(0.32) 	&	0.83 	&	0.45 	&	0.82 	(0.36) &	0.83 	&	0.40 	&	0.82 	(0.34) &	0.85 	&	0.38 	&	0.82 	(0.37) &	0.83 	&	0.38 \\
0.9, 	0.3 	&	0.88(0.16) 	&	0.88 	&	0.38 	&	0.87 	(0.19) &	0.87 	&	0.33 	&	0.87 	(0.16) &	0.88 	&	0.32 	&	0.86 	(0.17) &	0.87 	&	0.33 \\
0.8, 	0.8 	&	0.50(1.00) 	&	0.50 	&	0.69 	&	0.49 	(1.18) &	0.66 	&	0.59 	&	0.50 	(0.96) &	0.70 	&	0.57 	&	0.50 	(0.98) &	0.68 	&	0.57 \\
0.8, 	0.7 	&	0.60(0.72) 	&	0.60 	&	0.67 	&	0.60 	(0.76) &	0.67 	&	0.59 	&	0.59 	(0.73) &	0.71 	&	0.56 	&	0.60 	(0.74) &	0.68 	&	0.57 \\
0.8, 	0.6 	&	0.67(0.52) 	&	0.67 	&	0.64 	&	0.66 	(0.61) &	0.69 	&	0.57 	&	0.66 	(0.58) &	0.74 	&	0.54 	&	0.66 	(0.56) &	0.71 	&	0.55 \\
0.7, 	0.5 	&	0.63(0.35) 	&	0.63 	&	0.66 	&	0.62 	(0.40) &	0.66 	&	0.61 	&	0.62 	(0.35) &	0.72 	&	0.56 	&	0.62 	(0.37) &	0.67 	&	0.59 \\
0.7, 	0.3 	&	0.70(0.21) 	&	0.70 	&	0.61 	&	0.69 	(0.22) &	0.71 	&	0.57 	&	0.69 	(0.22) &	0.75 	&	0.52 	&	0.69 	(0.23) &	0.72 	&	0.55 \\
0.6, 	0.4 	&	0.60(0.24) 	&	0.60 	&	0.67 	&	0.60 	(0.28) &	0.63 	&	0.63 	&	0.60 	(0.24) &	0.71 	&	0.58 	&	0.59 	(0.27) &	0.65 	&	0.61 \\
0.5, 	0.5 	&	0.50(0.25) 	&	0.50 	&	0.69 	&	0.50 	(0.28) &	0.60 	&	0.64 	&	0.50 	(0.25) &	0.68 	&	0.59 	&	0.50 	(0.27) &	0.63 	&	0.63 \\
0.5, 	0.2 	&	0.62(0.13) 	&	0.62 	&	0.67 	&	0.61 	(0.16) &	0.64 	&	0.63 	&	0.61 	(0.13) &	0.71 	&	0.57 	&	0.61 	(0.14) &	0.65 	&	0.61 \\
0.4, 	0.3 	&	0.54(0.13) 	&	0.54 	&	0.69 	&	0.54 	(0.17) &	0.59 	&	0.65 	&	0.54 	(0.13) &	0.68 	&	0.60 	&	0.54 	(0.15) &	0.63 	&	0.63 \\
0.2, 	0.2 	&	0.50(0.06) 	&	0.50 	&	0.69 	&	0.50 	(0.10) &	0.57 	&	0.66 	&	0.50 	(0.07) &	0.67 	&	0.60 	&	0.50 	(0.08) &	0.61 	&	0.64 \\
\hline
\end{tabular}
\end{table}

\begin{table}[ht]
\centering
\caption{Simulated allocation proportion ($N_{n,1}/n$), selection bias (SB) and  entropy (Ent)  of three
designs  with allocation targets $\rho_k=\frac{1/q_k}{\sum_j(1/q_j)}$, sample size=200, and replication =1000, in a 3-treatment trial}
\label{tab:urn-3treatment-1}
\medskip

\tiny

\begin{tabular}{| c | ccccc | ccccc |}
\hline
Responses & \multicolumn{5}{c|}{Asymptotic} & \multicolumn{5}{c|}{DBCD $\gamma=2$} \\
  $p_1$, $p_2$, $p_3$  & $\rho_1(\sigma^2)$ & $\rho_2(\sigma^2)$ & $\rho_3(\sigma^2)$ & SB & Ent & $\rho_1(\sigma^2)$ & $\rho_2(\sigma^2)$ & $\rho_3(\sigma^2)$ & SB & Ent   \\
   \hline
0.9,	0.9,	0.9	&	0.33 	(2.00) &	0.33 	(2.00) &	0.33 	(2.00 ) &	0.33 	&	1.10 	&	0.33 	(2.38) &	0.33 	(2.27) &	0.33 	(2.24) &	0.56 	&	0.90 	\\
0.9,	0.9,	0.7	&	0.43 	(2.03) &	0.43 	(2.03) &	0.14 	(0.40 ) &	0.43 	&	1.00 	&	0.42 	(2.34) &	0.43 	(2.33) &	0.15 	(0.45) &	0.59 	&	0.85	\\
0.9,	0.8,	0.6	&	0.57 	(1.39) &	0.29 	(1.02) &	0.14 	(0.29 ) &	0.57 	&	0.96 	&	0.55 	(1.64) &	0.29 	(1.17) &	0.15 	(0.37) &	0.61 	&	0.85	\\
0.9,	0.7,	0.5	&	0.65 	(0.98) &	0.22 	(0.59) &	0.13 	(0.21 ) &	0.65 	&	0.88 	&	0.64 	(1.15) &	0.22 	(0.68) &	0.14 	(0.26) &	0.66 	&	0.79	\\
0.9,	0.5,	0.3	&	0.74 	(0.54) &	0.15 	(0.26) &	0.11 	(0.11 ) &	0.74 	&	0.74 	&	0.73 	(0.61) &	0.15 	(0.29) &	0.11 	(0.15) &	0.74 	&	0.69	\\
0.8,	0.8,	0.8	&	0.33 	(0.89) &	0.33 	(0.89) &	0.33 	(0.89 ) &	0.33 	&	1.10 	&	0.33 	(1.09) &	0.33 	(1.04) &	0.33 	(1.10) &	0.50 	&	0.97	\\
0.8,	0.7,	0.6	&	0.46 	(0.76) &	0.31 	(0.55) &	0.23 	(0.34 ) &	0.46 	&	1.06 	&	0.46 	(0.94) &	0.30 	(0.64) &	0.23 	(0.43) &	0.52 	&	0.97	\\
0.8,	0.6,	0.5	&	0.53 	(0.64) &	0.26 	(0.37) &	0.21 	(0.24 ) &	0.53 	&	1.02 	&	0.52 	(0.76) &	0.26 	(0.45) &	0.21 	(0.30) &	0.55 	&	0.94	\\
0.7,	0.5,	0.4	&	0.48 	(0.41) &	0.29 	(0.25) &	0.24 	(0.17 ) &	0.48 	&	1.05 	&	0.47 	(0.47) &	0.28 	(0.31) &	0.24 	(0.22) &	0.50 	&	0.99	\\
0.6,	0.5,	0.5	&	0.38 	(0.31) &	0.31 	(0.23) &	0.31 	(0.23 ) &	0.38 	&	1.09 	&	0.38 	(0.41) &	0.31 	(0.32) &	0.31 	(0.31) &	0.45 	&	1.03	\\
0.6,	0.5,	0.3	&	0.42 	(0.29) &	0.34 	(0.23) &	0.24 	(0.12 ) &	0.42 	&	1.07 	&	0.42 	(0.37) &	0.34 	(0.30) &	0.24 	(0.17) &	0.46 	&	1.02	\\
0.5,	0.5,	0.3	&	0.37 	(0.21) &	0.37 	(0.21) &	0.26 	(0.11 ) &	0.37 	&	1.09 	&	0.37 	(0.28) &	0.37 	(0.29) &	0.26 	(0.17) &	0.44 	&	1.04	\\
0.4,	0.3,	0.2	&	0.38 	(0.13) &	0.33 	(0.10) &	0.29 	(0.07 ) &	0.38 	&	1.09 	&	0.38 	(0.20) &	0.33 	(0.16) &	0.29 	(0.12) &	0.43 	&	1.05	\\
0.2,	0.2,	0.2	&	0.33 	(0.06) &	0.33 	(0.06) &	0.33 	(0.06 ) &	0.33 	&	1.10 	&	0.33 	(0.10) &	0.33 	(0.11) &	0.33 	(0.11) &	0.40 	&	1.06  	\\
\hline
 Responses &  \multicolumn{5}{c|}{GERADE $\alpha=2/3$} &  \multicolumn{5}{c|}{EDBCD $\gamma=2$}   \\
  $p_1$, $p_2$, $p_3$  & $\rho_1(\sigma^2)$ & $\rho_2(\sigma^2)$ & $\rho_3(\sigma^2)$ & SB & Ent & $\rho_1(\sigma^2)$ & $\rho_2(\sigma^2)$ & $\rho_3(\sigma^2)$ & SB & Ent  \\
   \hline
0.9,	0.9,	0.9	&	0.34 	(2.08) &	0.33 	(2.10) &	0.33 	(2.16) &	0.55 	&	0.93 	&	0.33 	(2.02) &	0.33 	(2.04) &	0.33 	(2.09) &	0.58 	&	0.87  	\\
0.9,	0.9,	0.7	&	0.42 	(2.10) &	0.42 	(1.98) &	0.15 	(0.47) &	0.59 	&	0.88 	&	0.42 	(2.15) &	0.42 	(2.21) &	0.15 	(0.48) &	0.61 	&	0.83  	\\
0.9,	0.8,	0.6	&	0.55 	(1.35) &	0.29 	(1.02) &	0.15 	(0.31) &	0.61 	&	0.86 	&	0.56 	(1.53) &	0.29 	(1.14) &	0.15 	(0.32) &	0.62 	&	0.82  	\\
0.9,	0.7,	0.5	&	0.63 	(1.10) &	0.22 	(0.66) &	0.14 	(0.27) &	0.66 	&	0.80 	&	0.64 	(1.09) &	0.22 	(0.69) &	0.14 	(0.24) &	0.67 	&	0.77  	\\
0.9,	0.5,	0.3	&	0.72 	(0.58) &	0.16 	(0.30) &	0.11 	(0.14) &	0.73 	&	0.69 	&	0.73 	(0.58) &	0.16 	(0.28) &	0.11 	(0.14) &	0.74 	&	0.68  	\\
0.8,	0.8,	0.8	&	0.33 	(0.95) &	0.34 	(0.94) &	0.33 	(0.94) &	0.52 	&	0.98 	&	0.33 	(0.94) &	0.33 	(0.95) &	0.33 	(0.98) &	0.53 	&	0.95  	\\
0.8,	0.7,	0.6	&	0.45 	(0.76) &	0.31 	(0.60) &	0.23 	(0.35) &	0.53 	&	0.96 	&	0.46 	(0.77) &	0.31 	(0.58) &	0.23 	(0.38) &	0.54 	&	0.94  	\\
0.8,	0.6,	0.5	&	0.51 	(0.66) &	0.27 	(0.40) &	0.21 	(0.26) &	0.56 	&	0.93 	&	0.52 	(0.65) &	0.27 	(0.41) &	0.21 	(0.27) &	0.56 	&	0.92  	\\
0.7,	0.5,	0.4	&	0.47 	(0.41) &	0.29 	(0.26) &	0.24 	(0.18) &	0.53 	&	0.98 	&	0.47 	(0.47) &	0.28 	(0.29) &	0.24 	(0.19) &	0.52 	&	0.97  	\\
0.6,	0.5,	0.5	&	0.38 	(0.31) &	0.31 	(0.25) &	0.31 	(0.24) &	0.49 	&	1.01 	&	0.38 	(0.35) &	0.31 	(0.27) &	0.31 	(0.27) &	0.47 	&	1.01  	\\
0.6,	0.5,	0.3	&	0.42 	(0.31) &	0.34 	(0.24) &	0.24 	(0.13) &	0.50 	&	1.00 	&	0.42 	(0.31) &	0.34 	(0.27) &	0.24 	(0.13) &	0.49 	&	1.00  	\\
0.5,	0.5,	0.3	&	0.37 	(0.23) &	0.37 	(0.24) &	0.26 	(0.13) &	0.48 	&	1.02 	&	0.37 	(0.24) &	0.36 	(0.25) &	0.26 	(0.14) &	0.47 	&	1.02  	\\
0.4,	0.3,	0.2	&	0.38 	(0.14) &	0.33 	(0.11) &	0.29 	(0.08) &	0.47 	&	1.03 	&	0.38 	(0.16) &	0.33 	(0.12) &	0.29 	(0.10) &	0.45 	&	1.03  	\\
0.2,	0.2,	0.2	&	0.33 	(0.07) &	0.33 	(0.07) &	0.33 	(0.06) &	0.45 	&	1.04 	&	0.33 	(0.07) &	0.33 	(0.08) &	0.33 	(0.07) &	0.44 	&	1.05 	\\
\hline
\end{tabular}
\end{table}

\begin{table}[ht]
\centering
\caption{Simulated allocation proportion ($N_{n,1}/n$), selection bias (SB) and  entropy (Ent)  of three
designs  with allocation targets $\rho_k=\frac{1/q_k}{\sum_j(1/q_j)}$, sample size=200, and replication =1000, in a 3-treatment trial}
\label{tab:urn-3treatment-2}
\medskip

\tiny

\begin{tabular}{| c | ccccc | ccccc |}
\hline
Responses & \multicolumn{5}{c|}{Asymptotic} & \multicolumn{5}{c|}{DBCD $\gamma=4$} \\
  $p_1$, $p_2$, $p_3$  & $\rho_1(\sigma^2)$ & $\rho_2(\sigma^2)$ & $\rho_3(\sigma^2)$ & SB & Ent & $\rho_1(\sigma^2)$ & $\rho_2(\sigma^2)$ & $\rho_3(\sigma^2)$ & SB & Ent   \\
   \hline
0.9,	0.9,	0.9	&	0.33 	(2.00) &	0.33 	(2.00) &	0.33 	(2.00 ) &	0.33 	&	1.10 	&	0.33 	(1.96) &	0.33 	(1.93) &	0.33 	(2.00) &	0.61 	&	0.82  	\\
0.9,	0.9,	0.7	&	0.43 	(2.03) &	0.43 	(2.03) &	0.14 	(0.40 ) &	0.43 	&	1.00 	&	0.42 	(2.15) &	0.43 	(2.15) &	0.15 	(0.43) &	0.63 	&	0.79 	\\
0.9,	0.8,	0.6	&	0.57 	(1.39) &	0.29 	(1.02) &	0.14 	(0.29 ) &	0.57 	&	0.96 	&	0.56 	(1.40) &	0.28 	(1.03) &	0.15 	(0.29) &	0.64 	&	0.78 	\\
0.9,	0.7,	0.5	&	0.65 	(0.98) &	0.22 	(0.59) &	0.13 	(0.21 ) &	0.65 	&	0.88 	&	0.64 	(1.05) &	0.22 	(0.68) &	0.14 	(0.21) &	0.68 	&	0.75 	\\
0.9,	0.5,	0.3	&	0.74 	(0.54) &	0.15 	(0.26) &	0.11 	(0.11 ) &	0.74 	&	0.74 	&	0.73 	(0.59) &	0.15 	(0.28) &	0.11 	(0.13) &	0.75 	&	0.65 	\\
0.8,	0.8,	0.8	&	0.33 	(0.89) &	0.33 	(0.89) &	0.33 	(0.89 ) &	0.33 	&	1.10 	&	0.33 	(1.03) &	0.33 	(1.00) &	0.33 	(0.98) &	0.54 	&	0.92 	\\
0.8,	0.7,	0.6	&	0.46 	(0.76) &	0.31 	(0.55) &	0.23 	(0.34 ) &	0.46 	&	1.06 	&	0.46 	(0.78) &	0.31 	(0.58) &	0.23 	(0.37) &	0.55 	&	0.92 	\\
0.8,	0.6,	0.5	&	0.53 	(0.64) &	0.26 	(0.37) &	0.21 	(0.24 ) &	0.53 	&	1.02 	&	0.52 	(0.70) &	0.27 	(0.42) &	0.21 	(0.27) &	0.57 	&	0.90 	\\
0.7,	0.5,	0.4	&	0.48 	(0.41) &	0.29 	(0.25) &	0.24 	(0.17 ) &	0.48 	&	1.05 	&	0.47 	(0.45) &	0.29 	(0.30) &	0.24 	(0.19) &	0.52 	&	0.96 	\\
0.6,	0.5,	0.5	&	0.38 	(0.31) &	0.31 	(0.23) &	0.31 	(0.23 ) &	0.38 	&	1.09 	&	0.38 	(0.33) &	0.31 	(0.26) &	0.31 	(0.27) &	0.48 	&	1.00 	\\
0.6,	0.5,	0.3	&	0.42 	(0.29) &	0.34 	(0.23) &	0.24 	(0.12 ) &	0.42 	&	1.07 	&	0.42 	(0.35) &	0.34 	(0.27) &	0.24 	(0.16) &	0.49 	&	0.99 	\\
0.5,	0.5,	0.3	&	0.37 	(0.21) &	0.37 	(0.21) &	0.26 	(0.11 ) &	0.37 	&	1.09 	&	0.37 	(0.27) &	0.37 	(0.25) &	0.26 	(0.14) &	0.47 	&	1.01 	\\
0.4,	0.3,	0.2	&	0.38 	(0.13) &	0.33 	(0.10) &	0.29 	(0.07 ) &	0.38 	&	1.09 	&	0.38 	(0.18) &	0.33 	(0.13) &	0.29 	(0.10) &	0.45 	&	1.03 	\\
0.2,	0.2,	0.2	&	0.33 	(0.06) &	0.33 	(0.06) &	0.33 	(0.06 ) &	0.33 	&	1.10 	&	0.33 	(0.08) &	0.33 	(0.09) &	0.33 	(0.08) &	0.43 	&	1.04   	\\
\hline
  Responses &  \multicolumn{5}{c|}{GERADE $\alpha=1/2$} &  \multicolumn{5}{c|}{EDBCD $\gamma=4$}   \\
  $p_1$, $p_2$, $p_3$  & $\rho_1(\sigma^2)$ & $\rho_2(\sigma^2)$ & $\rho_3(\sigma^2)$ & SB & Ent & $\rho_1(\sigma^2)$ & $\rho_2(\sigma^2)$ & $\rho_3(\sigma^2)$ & SB & Ent  \\
   \hline
0.9,	0.9,	0.9	&	0.34 	(1.83)&	0.33 	(1.99) &	0.33 	(1.97) &	0.60 	&	0.87 	&	0.33 	(1.96) &	0.33 	(1.93) &	0.34 	(1.92) &	0.62 	&	0.79  	\\
0.9,	0.9,	0.7	&	0.42 	(1.89)&	0.42 	(1.85) &	0.15 	(0.44) &	0.63 	&	0.81 	&	0.42 	(1.90) &	0.43 	(2.00) &	0.15 	(0.41) &	0.64 	&	0.76  	\\
0.9,	0.8,	0.6	&	0.56 	(1.32)&	0.29 	(0.98) &	0.15 	(0.30) &	0.65 	&	0.79 	&	0.56 	(1.33) &	0.29 	(0.96) &	0.15 	(0.29) &	0.65 	&	0.76  	\\
0.9,	0.7,	0.5	&	0.64 	(0.97)&	0.22 	(0.60) &	0.14 	(0.23) &	0.69 	&	0.74 	&	0.64 	(0.92) &	0.22 	(0.57) &	0.13 	(0.20) &	0.69 	&	0.72  	\\
0.9,	0.5,	0.3	&	0.73 	(0.53)&	0.16 	(0.26) &	0.11 	(0.12) &	0.75 	&	0.64 	&	0.73 	(0.57) &	0.16 	(0.27) &	0.11 	(0.13) &	0.75 	&	0.63  	\\
0.8,	0.8,	0.8	&	0.33 	(0.90)&	0.33 	(0.88) &	0.33 	(0.86) &	0.56 	&	0.91 	&	0.33 	(0.92) &	0.33 	(0.94) &	0.33 	(0.88) &	0.56 	&	0.89  	\\
0.8,	0.7,	0.6	&	0.46 	(0.78)&	0.31 	(0.56) &	0.23 	(0.36) &	0.58 	&	0.90 	&	0.46 	(0.82) &	0.31 	(0.58) &	0.23 	(0.36) &	0.56 	&	0.90  	\\
0.8,	0.6,	0.5	&	0.52 	(0.63)&	0.26 	(0.41) &	0.21 	(0.25) &	0.60 	&	0.87 	&	0.52 	(0.69) &	0.26 	(0.42) &	0.21 	(0.28) &	0.58 	&	0.88  	\\
0.7,	0.5,	0.4	&	0.47 	(0.38)&	0.29 	(0.24) &	0.24 	(0.16) &	0.57 	&	0.92 	&	0.47 	(0.42) &	0.29 	(0.27) &	0.24 	(0.19) &	0.54 	&	0.93  	\\
0.6,	0.5,	0.5	&	0.38 	(0.31)&	0.31 	(0.22) &	0.31 	(0.25) &	0.53 	&	0.95 	&	0.38 	(0.36) &	0.31 	(0.25) &	0.31 	(0.24) &	0.50 	&	0.97  	\\
0.6,	0.5,	0.3	&	0.42 	(0.30)&	0.34 	(0.25) &	0.24 	(0.13) &	0.55 	&	0.94 	&	0.42 	(0.33) &	0.33 	(0.26) &	0.24 	(0.13) &	0.51 	&	0.97  	\\
0.5,	0.5,	0.3	&	0.37 	(0.22)&	0.36 	(0.21) &	0.26 	(0.11) &	0.53 	&	0.96 	&	0.37 	(0.23) &	0.37 	(0.24) &	0.26 	(0.13) &	0.50 	&	0.98  	\\
0.4,	0.3,	0.2	&	0.38 	(0.13)&	0.33 	(0.11) &	0.29 	(0.07) &	0.51 	&	0.97 	&	0.38 	(0.14) &	0.33 	(0.11) &	0.29 	(0.08) &	0.48 	&	1.00  	\\
0.2,	0.2,	0.2	&	0.33 	(0.06)&	0.33 	(0.06) &	0.33 	(0.06) &	0.50 	&	0.98 	&	0.33 	(0.07) &	0.33 	(0.07) &	0.33 	(0.07) &	0.46 	&	1.02 	\\
\hline
\end{tabular}
\end{table}

\end{document}